\newcommand{\quotes}[1]{\lq#1\rq}
\newcommand{\norm}[1]{\lVert #1 \rVert}
\newcommand{\br}{\mathbb{R}}
\newcommand{\bz}{\mathbb{Z}}
\newcommand{\cd}{\mathcal{D}}
\newcommand{\cf}{\mathcal{F}}
\newcommand{\abs}[1]{\lvert#1\rvert}
\newcommand{\dsigma}{\mathrm{d}\sigma}
\newcommand{\domega}{\mathrm{d}\omega}
\newcommand{\dmu}{\mathrm{d}\mu}
\newcommand{\dt}{\mathrm{d}t}
\newcommand{\cdotroomy}{\,\cdot\,}
\newcommand{\cq}{\mathcal{Q}}
\newcommand{\angles}[1]{\langle #1 \rangle}
\newcommand{\conditionaa}{\int \big( \sum_Q \lambda_Q \frac{\omega(Q)}{\sigma(Q)} a_Q^{-1} 1_Q \big)^{p'} \dsigma < \infty} 
\newcommand{\conditionab}{\int \big( \sup_Q a_Q 1_Q \big)^{\frac{q}{1-q}} \domega < \infty}
\newcommand{\conditionba}{\sup_Q \frac{1}{\sigma(Q)}\sum_{R\subseteq Q} \lambda_R b_R^{-1} \omega(R) < \infty}
\newcommand{\conditionbb}{\sum_Q \lambda_Q a_Q^{-p'} b_Q^{p'-1} \omega(Q) <\infty}
\newcommand{\conditionbc}{\int \big( \sup_Q a_Q 1_Q \big)^{\frac{q}{1-q}} \domega < \infty}
\newcommand{\conditionda}{\sup_Q \frac{1}{\sigma(Q)} \sum_{R\subseteq Q} \lambda_Q a_Q^{-1} \omega(Q)  < \infty}
\newcommand{\conditiondb}{\int \big( \sum_Q \lambda_Q a_Q^{p'-1} 1_Q \big)^{\frac{(p-1)q}{p-q}}  \domega < \infty}
 \newcommand{\conditionca}{\sup_Q \frac{b_Q^{-1}}{a_Q^{-1}\sigma(Q)} \sum_{R\subseteq Q} \lambda_R a_R^{-1} \omega(R)< \infty}
 \newcommand{\conditioncb}{\sum_Q \lambda_Q a_Q^{-p'} b_Q^{p'-1} \omega(Q) < \infty }\newcommand{\conditioncc}{\int \big( \sup_Q a_Q 1_Q \big)^{\frac{q}{1-q}} \domega < \infty}
\newcommand{\conditionea}{\sup_Q \frac{1}{\sigma(Q)} \sum_{R\subseteq Q} \lambda_Q a_Q^{-1} \omega(Q)  < \infty }
\newcommand{\conditioneb}{\sum_Q \lambda_Q b_Q^{-p'} \omega(Q) a_Q^{p'-1} < \infty }
\newcommand{\conditionec}{\int \big( \sup b_Q 1_Q \big)^{\frac{q}{1-q}} \domega <\infty}
\theoremstyle{plain}
\newtheorem{theorem}{Theorem}[section]
\newtheorem{lemma}[theorem]{Lemma}
\newtheorem{proposition}[theorem]{Proposition}
\theoremstyle{definition}
\newtheorem{definition}[theorem]{Definition}
\newtheorem{problem}[theorem]{Problem}
\theoremstyle{remark}
\newtheorem*{remark}{Remark}
\numberwithin{equation}{section}
\begin{document}
\title[Two-weight $L^p\to L^q$ bounds for  positive operators]{Two-weight $L^p\to L^q$ bounds for positive dyadic operators in the case $0<q< 1 \le  p<\infty$}

\author{Timo S. H\"anninen}
\address{Department of Mathematics and Statistics, University of Helsinki, P.O. Box 68, FI-00014 HELSINKI, FINLAND}
\email{timo.s.hanninen@helsinki.fi}
\author{Igor E. Verbitsky}
\address{Department of Mathematics, University of Missouri, Columbia, MO  65211, USA}
\email{verbitskyi@missouri.edu}

\thanks{T.S.H. is supported by the Academy of Finland through funding of his postdoctoral researcher post (Funding Decision No 297929), and by the Jenny and Antti Wihuri Foundation through covering of expenses of his visit to the University of Missouri. He is a member of the Finnish Centre of Excellence in Analysis and Dynamics Research.}

\keywords{Two-weight norm inequalities, positive dyadic potential operators, Wolff potentials, discrete Littlewood--Paley spaces}
\subjclass[2010]{42B25, 42B35, 47G40}
\begin{abstract}

Let $\sigma$, $\omega$ be measures on $\br^d$, and let $\{\lambda_Q\}_{Q\in\mathcal{D}}$ be 
a family of non-negative reals indexed by the collection $\cd$ of dyadic cubes in $\br^d$. We characterize the two-weight norm inequality, 
\begin{equation*}
\lVert T_\lambda(f\sigma)\rVert_{L^q(\omega)}\le C \,  \lVert f \rVert_{L^p(\sigma)}\quad \text{for every $f\in L^p(\sigma)$,}
\end{equation*}
for the positive dyadic operator 
\begin{equation*}
T_\lambda(f\sigma):= \sum_{Q\in \mathcal{D}} \lambda_Q \, \Big(\frac{1}{\sigma(Q)} \int_Q f\mathrm{d}\sigma\Big) \, 1_Q
\end{equation*}
in the difficult range $0<q<1 \le p<\infty$ of integrability exponents. 
This range of the exponents $p, q$ appeared recently 
 in applications to nonlinear PDE, 
 which was one of the motivations for our study.

Furthermore, we introduce a scale of discrete Wolff potential conditions that depends monotonically on an integrability parameter, and prove that such conditions are necessary (but not sufficient)  for small parameters, and sufficient (but not necessary) for large parameters. 

Our characterization applies to Riesz potentials $I_\alpha (f \sigma) = (-\Delta)^{-\frac{\alpha}{2}} (f\sigma) $ ($0<\alpha<d$), since it is known that they  can be controlled by  model dyadic operators. The weighted norm inequality for Riesz potentials 
in this range  of $p, q$ 
has been characterized previously only in the special case where $\sigma$ is Lebesgue measure.

\end{abstract}
\maketitle
\tableofcontents
%
%
\section*{Notation}
\begin{tabular}{c l}
$\cd$ & The collection of all the dyadic cubes.\\
$\cq$ & The collection of all the dyadic cubes  \\
& that contribute to the  two-weight norm inequality \eqref{eq_norminequality}, \\
&$\cq:=\{Q\in \cd : \lambda_Q>0, \sigma(Q)>0, \text{ and }  \omega(Q)>0\}.$\\
$L^p(\mu)$ & The Lebesgue space, equipped with the norm \\
& $\norm{f}_{L^p(\mu)}:=(\int \abs{f}^p \dmu)^{\frac{1}{p}}.$\\
$f^{p,q}(\mu)$ & The discrete Littlewood--Paley space, equipped with the norm\\
&  $\norm{a}_{f^{p,q}(\mu)}:=\big(\int \big( \sum_Q \abs{a_Q}^q 1_Q\big)^{\frac{p}{q}} \dmu\big)^{\frac{1}{p}},  \text{ when } 0<p<\infty, \, 0<q\le \infty,$\\
&  $\norm{a}_{f^{\infty, q}(\mu)}:=  \Big( \sup_Q  \frac{1}{\mu(Q)}\sum_{R\subseteq Q} \abs{a_R}^q \, \mu(R) \Big)^{\frac{1}{q}},\text{when } p=\infty, \, 0<q<\infty$.\\
$\angles{f}^\mu_Q$ & The average of a function $f$ on a cube $Q$,\\
& $\angles{f}^\mu_Q:=\frac{1}{\mu(Q)} \int_Q f \dmu.$\\
$p'$& The H\"older conjugate $p'\in[1,\infty]$ of an exponent $p\in[1,\infty]$,\\
& $p':=\frac{p}{p-1}$.\\
$T_\lambda(\cdotroomy\sigma)$ & The dyadic operator, \\ 
&$T_\lambda(f\sigma):=\sum_{Q \in \cq} \lambda_Q \, \angles{f}^\sigma_Q \, 1_Q.$\\
$\rho_Q$& The localized sum of the operator's coefficients,  \\
&$\rho_Q:= \sum_{R\subseteq Q}\lambda_R 1_R.$\\
$\Lambda_{\gamma,Q}$ & The $\gamma$-average of the localized sum of the operator's coefficients, \\
&$\Lambda_{\gamma,Q}:=\Big(\frac{1}{\omega(Q)}\int_Q \rho_Q^\gamma\domega \Big)^{\frac{1}{\gamma}}, \, \gamma \in \br\setminus\{0\}.
$\\
$a^{-1}$& For a family $a:=\{a_Q\}$, the family $a^{-1}$ is defined by $a^{-1}:=\{a_Q^{-1}\}$. 
\end{tabular}
\smallskip

The least constant in the $L^p(\sigma)\to L^q(\omega)$ two-weight norm inequality for the operator $T_\lambda(\cdotroomy \sigma)$ is denoted by $\norm{T_\lambda(\cdotroomy \sigma)}_{L^p(\sigma)\to L^q(\omega)}$.

The uppercase Latin letters  $P,Q,R,S$ are reserved for dyadic cubes. When the collection of the cubes is clear from the context, the indexations \quotes{$Q\in \cd$} and \quotes{$Q\in \cq$} are both abbreviated as \quotes{$Q$} in the indexation of summations, and omitted in the indexation of families (and similarly for the cubes $P,R,S$). 

The lowercase Latin letters  $a,b,c,d$ are reserved for various families $a:=\{a_Q\}$, $b:=\{b_Q\}, \ldots$ of non-negative reals, and $\lambda:=\{\lambda_Q\}$ for the fixed family of non-negative reals associated with the operator $T_\lambda(\cdotroomy \sigma)$.

Throughout this paper, we follow the usual convention $\frac{0}{0}:=0$. 

\section{Introduction}
Let $L^p(\sigma)$ and $L^q(\omega)$ denote the Lebesgue spaces associated with exponents $0< p, q \le \infty$ and locally finite Borel measures $\sigma$ and $\omega$ on $\br^d$. Let $\lambda:=\{\lambda_Q\}_{Q\in \cd}$ be a family of non-negative reals indexed by the collection  $\cd$ of dyadic cubes.  The positive dyadic operator $T_\lambda(\cdotroomy \sigma)$ associated with the coefficients $\lambda$  is defined by setting
\begin{equation}\label{eq_definition_operator}
T_\lambda(f\sigma):= \sum_{Q\in \cd} \lambda_Q \, \Big(\frac{1}{\sigma(Q)} \int_Q f\dsigma\Big) \, 1_Q
\end{equation}
for every measurable function $f:\br^d \to \br$. 

We characterize  the two-weight norm inequality
\begin{equation}\label{eq_norminequality}
\norm{T_\lambda(f\sigma)}_{L^q(\omega)}\lesssim \norm{f}_{L^p(\sigma)}\quad \text{for every $f\in L^p(\sigma)$}
\end{equation}
in the case $0<q<1 \le  p<\infty$. This range of the exponents $p, q$ appeared recently 
 in applications to nonlinear PDE \cite{cao2017,quinn2017,verbitsky2017}, 
 which was one of the motivations for our study.

Our characterization is obtained by means of factorization of the operator's coefficients $\{\lambda_Q\}$ in discrete Littlewood--Paley spaces. Furthermore, we introduce a scale of discrete Wolff potential conditions that depends monotonically on an integrability parameter $\gamma$, and prove that this potential condition is necessary (but not sufficient) for small parameters, and sufficient (but not necessary) for large parameters. 

We note that, for the two-weight norm inequality \eqref{eq_norminequality}, we may in the operator's definition \eqref{eq_definition_operator} restrict the summation over the collection  $\cd$ of all the dyadic cubes to the summation over the subcollection $\cq$ defined by
$$
\cq:=\{Q\in \cd : \lambda_Q>0, \sigma(Q)>0, \text{ and }  \omega(Q)>0\}, 
$$ because only such cubes contribute to the norm. This restriction allows us  to avoid division by zero in certain conditions.

In the case $1<p\leq q<\infty$, the two-weight inequality can be characterized by 
 a pair of the usual testing conditions  associated with inequality
\eqref{eq_norminequality} and its dual inequality  \cite{nazarov1999,lacey2009,treil2015,hytonen2014}. 

In the case $1<q<p<\infty$, the two-weight inequality can be characterized by a pair of the Wolff potential conditions \cite{cascante2004,tanaka2014}, or, alternatively, by maximal inequalities that will be introduced in the authors' forthcoming preprint.  These characterizations   have also been extended to vector-valued \cite{scurry2013,hanninen2017,lai2015} and multilinear \cite{tanaka2015,hanninen2016,tanaka2016} settings.

In contrast to these well-understood cases, the case $0<q< 1 <  p<\infty$ is less studied. To the best of the authors' knowledge, the only earlier characterizations of the two-weight norm inequality are the following:
\begin{itemize}
\item Characterization by means of a single Wolff potential condition \eqref{eq_wolff_potential_condition} under an additional restriction that the operator's coefficients satisfy the so-called {\it dyadic  logarithmic bounded oscillation} (DLBO) condition, by Cascante, Ortega, and Verbitsky \cite[Theorem A]{cascante2006}. The Wolff potential condition and the DLBO restriction are explained later in the introduction.

\item  A dual reformulation of the weighted norm inequality through rephrasing  it in terms of discrete Littlewood--Paley norms, by Cascante and Ortega \cite[Theorem 1.1]{cascante2017}. The dual reformulation is explained in Section \ref{sec_straightforward_conditions}.

\end{itemize}

\begin{remark} In the much simpler ``unweighted" case, when  $\mu=\omega=\sigma$, the inequality is characterized by the simple condition 
$\int \big(\sum_{Q\in \cd} \lambda_Q \, 1_Q \big)^{\frac{pq}{p-q}} \dmu < \infty$ (see \cite{verbitsky2017}, where more general integral operators are treated). 
\end{remark}

In this paper, we study the difficult case $0<q<1  \le   p<\infty$,  
 for general measures $\sigma$ and $\omega$, and general coefficients $\lambda$.   In the endpoint case $p=1$, some of our results hold for a related multiplier problem for discrete Littlewood--Paley 
spaces, which can be viewed as a modification of the weighted norm inequality \eqref{eq_norminequality} (see the remark  after Theorem~\ref{theorem_factorization}).

 We characterize the two-weight norm inequality \eqref{eq_norminequality} by means of the existence of an auxiliary family of coefficients satisfying a $\sigma$-Carleson and an $\omega$-integrability condition:

\begin{theorem}[Characterization via auxiliary coefficients]\label{theorem_maintheorem}Let $0<q<1 < p<\infty$. Let $\sigma$ and $\omega$ be locally finite Borel measures on $\br^d$. Let $\{\lambda_Q\}_{Q\in \cq}$ be a family of non-negative reals  associated with  the operator $T_\lambda(\cdotroomy \sigma)$. Then the following assertions hold:
\begin{enumerate}[label=(\roman*)]
\item (Sufficiency) Every family $\{a_Q\}_{Q\in\cq}$ of positive reals satisfies the estimate
\begin{equation}\label{eq_upperb}
\begin{split}
&\norm{T_\lambda(\cdotroomy \sigma)}_{L^p(\sigma)\to L^q(\omega)}\lesssim_{p,q}\\
& 
\Big(\sup_{Q\in\cq} \frac{1}{\sigma(Q)} \sum_{\substack{R\in \cq :\\R\subseteq Q}} a_R^{-1} \sigma(R) \Big)^{\frac{1}{p}}\Big(\int \Big(  \sum_{Q\in \cq} \lambda_Q^{p'}  \Big(\frac{\omega(Q)}{\sigma(Q)}\Big)^{p'-1}
a_Q^{p'-1} 1_Q \Big)^{\frac{(p-1)q}{p-q}} \domega\Big)^{\frac{p-q}{p q}}.
\end{split}
\end{equation}

\item (Necessity) There exists a family $\{a_Q\}_{Q\in\cq}$ of positive reals that satisfies the reverse of estimate \eqref{eq_upperb}.
\end{enumerate}
\end{theorem}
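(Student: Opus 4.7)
The plan is to prove sufficiency and necessity separately: sufficiency by a pointwise H\"older factorization combined with the Carleson embedding theorem, and necessity by a construction of the auxiliary family which should be the main difficulty.

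For (i), given an arbitrary positive family $\{a_Q\}_{Q\in\cq}$, I introduce the weight $c_Q := (a_Q \omega(Q)/\sigma(Q))^{1/p}$ and factor pointwise
\[
T_\lambda(f\sigma)(x) \;=\; \sum_{Q\ni x}(\lambda_Q c_Q)\,(c_Q^{-1}\angles{f}^\sigma_Q).
\]
The discrete H\"older inequality with conjugate exponents $p'$ and $p$ gives $T_\lambda(f\sigma)(x)^q \le U(x)^{q/p'} V(x)^{q/p}$, where $U := \sum_Q (\lambda_Q c_Q)^{p'} 1_Q$ and $V := \sum_Q (c_Q^{-1}\angles{f}^\sigma_Q)^p 1_Q$. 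Integrating against $\omega$ and applying integral H\"older with conjugate exponents $p/(p-q)$ and $p/q$, chosen so that $(q/p)(p/q)=1$ and $(q/p')(p/(p-q))=(p-1)q/(p-q)$, yields
\[
\|T_\lambda(f\sigma)\|_{L^q(\omega)}^q \;\le\; \Big(\int U^{\frac{(p-1)q}{p-q}}\,\domega\Big)^{\!\frac{p-q}{p}}\Big(\int V\,\domega\Big)^{\!\frac{q}{p}}.
\]
The choice of $c_Q$ is tailored so that $(\lambda_Q c_Q)^{p'} = \lambda_Q^{p'}(\omega(Q)/\sigma(Q))^{p'-1} a_Q^{p'-1}$ and $c_Q^{-p}\omega(Q) = a_Q^{-1}\sigma(Q)$, so the first factor is exactly $B^q$ (where $B$ denotes the second factor in \eqref{eq_upperb}) and the second integral reduces to $\sum_Q a_Q^{-1}\sigma(Q)(\angles{f}^\sigma_Q)^p$. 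The Carleson embedding theorem applied to the $\sigma$-Carleson sequence $\{a_Q^{-1}\sigma(Q)\}$, whose Carleson constant is precisely the first factor $A$ in \eqref{eq_upperb}, bounds this sum by $CA\|f\|_{L^p(\sigma)}^p$, proving (i).

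For (ii), my plan is to reverse-engineer an auxiliary family $\{a_Q\}$ from a dual description of the operator norm. I would begin with a bilinear formulation of $\|T_\lambda(\cdotroomy\sigma)\|_{L^p(\sigma)\to L^q(\omega)}$ in terms of pairs of sequences in the discrete Littlewood--Paley spaces $f^{p,q}(\mu)$ (along the lines of the dual reformulation of Cascante--Ortega mentioned in the introduction), and then apply a factorization lemma that splits an extremal sequence into a $\sigma$-Carleson factor (encoding $A$) and an $\omega$-Littlewood--Paley factor (encoding $B$). The coefficients $\{a_Q\}$ are then read off from this decomposition. As an alternative route, one could first establish a two-parameter factored characterization using an auxiliary pair of sequences (in the spirit of the conditions labeled \textup{(ba)/(bb)/(bc)} or \textup{(ca)/(cb)/(cc)} in the paper's preamble), and then consolidate the two sequences into one by a min/max optimization.

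The main obstacle will be this factorization step. Since $0<q<1$, classical duality for $L^q(\omega)$ is unavailable, so the construction of $\{a_Q\}$ must proceed by a greedy or stopping-time selection that carefully balances the competing Carleson and integrability contributions; matching the sharp constants in both factors of \eqref{eq_upperb} simultaneously is what makes this delicate. A secondary technicality is ensuring that the produced $\{a_Q\}$ is strictly positive throughout $\cq$, which may require an approximation or regularization argument.
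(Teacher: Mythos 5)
Your sufficiency argument (i) is correct and is actually a different, more self-contained route than the paper's. The paper derives \eqref{eq_upperb} by first establishing a factorization characterization (Theorem~\ref{theorem_factorization}, proven via Maurey's theorem) and then converting between auxiliary families using factorizations of discrete Littlewood--Paley spaces (Proposition~\ref{proposition_equivalenceofauxiliaryfamilies}, Lemmas~\ref{lemma_d2_sub} and~\ref{lemma_asub1}). Your proof instead proceeds directly: the pointwise splitting $T_\lambda(f\sigma)=\sum_{Q\ni x}(\lambda_Q c_Q)(c_Q^{-1}\angles{f}^\sigma_Q)$ with $c_Q:=(a_Q\omega(Q)/\sigma(Q))^{1/p}$, followed by discrete H\"older in $Q$ with exponents $(p',p)$ and then integral H\"older in $x$ with exponents $(\frac{p}{p-q},\frac{p}{q})$, produces exactly the two factors in \eqref{eq_upperb}; the verification that $c_Q^{p'}=(a_Q\omega(Q)/\sigma(Q))^{p'-1}$ (since $p'/p=p'-1$) and $c_Q^{-p}\omega(Q)=a_Q^{-1}\sigma(Q)$ is correct, and the Carleson embedding theorem for the $\sigma$-Carleson sequence $\{a_Q^{-1}\sigma(Q)\}$ closes the argument. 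This is a legitimate and rather elegant shortcut for the sufficiency half, trading the machinery of Littlewood--Paley space factorization for elementary H\"older plus Carleson embedding. It buys transparency, though it does not by itself yield the richer family of equivalent conditions the paper collects in the Appendix.

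For necessity (ii), however, your proposal contains a genuine gap: you describe a \emph{plan} rather than a proof, and the plan omits the one ingredient that actually overcomes the obstacle you correctly identify (lack of duality for $L^q(\omega)$ when $0<q<1$). The paper's mechanism is Maurey's factorization theorem (Theorem~\ref{theorem:maurey}/\ref{theorem:maurey_another}), which says that the $L^q$-norm inequality with $q\in(0,1)$ for a positive operator is \emph{equivalent} to the existence of a density $\Phi\ge 0$ with $\int\Phi\,\domega\le 1$ and $\int T_\lambda(f\sigma)\,\Phi^{-\frac{1-q}{q}}\,\domega\le C\norm{f}_{L^p(\sigma)}$. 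Discretizing $\Phi$ via $a_Q^{-1}:=\angles{\Phi^{-\frac{1-q}{q}}}^\omega_Q$ converts this into the factorization condition \eqref{eq_fact_discrete}, and it is precisely this linear ($L^1$-type) reformulation that replaces the missing duality. Your suggestion of a ``greedy or stopping-time selection that carefully balances the competing Carleson and integrability contributions'' is not backed by any argument, and it is far from clear that such a construction can recover the exact exponents with matching constants; in particular, nothing in your sketch produces a $\Phi$-like majorant, which is the crux. So: (i) is a complete, correct alternative proof, but (ii) needs to be rebuilt around Maurey's factorization (or an equivalent substitute) plus a conversion step such as Proposition~\ref{proposition_equivalenceofauxiliaryfamilies}; without that, the necessity direction is unproven.
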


\begin{remark} Clearly, under the assumptions of Theorem \ref{theorem_maintheorem}, we have 
\begin{equation*}\label{eq_upperb-b}
\begin{split}
&\norm{T_\lambda(\cdotroomy \sigma)}_{L^p(\sigma)\to L^q(\omega)}\eqsim_{p,q} \, \inf_{\substack{\{a_Q\}_{Q\in\cq}:\\ a_Q>0}} \Big(\sup_{Q\in\cq} \frac{1}{\sigma(Q)} \sum_{\substack{R\in \cq :\\R\subseteq Q}} a_R^{-1} \sigma(R) \Big)^{\frac{1}{p}}\\ 
& 
\times \Big(\int \Big(  \sum_{Q\in \cq} \lambda_Q^{p'}  \Big(\frac{\omega(Q)}{\sigma(Q)}\Big)^{p'-1}
a_Q^{p'-1} 1_Q \Big)^{\frac{(p-1)q}{p-q}} \domega\Big)^{\frac{p-q}{p q}}.
\end{split}
\end{equation*}
\end{remark}

It is useful to have more concrete sufficient and necessary conditions for the two-weight norm inequality \eqref{eq_norminequality}, in order to be able to verify in practice whether such a condition holds or fails for a particular operator and particular measures. 
 By constructing specific auxiliary coefficients, we apply the characterization via auxiliary families (Theorem \ref{theorem_maintheorem}, Theorem \ref{theorem_factorization}, and their variants) to prove that the single Wolff potential condition \eqref{eq_wolff_potential_condition} is sufficient, even without imposing the DLBO restriction on the operator's coefficients. Notice that the crucial exponent $\frac{(p-1)q}{p-q}$ in condition \eqref{eq_upperb} is the same  as in the Wolff potential condition \eqref{eq_wolff_potential_condition}.

The starting point for us is the following factorization theorem, which allows us to factorize the operator's coefficients  as $\{\lambda_Q\}_{Q\in \cq}=\{b_Q c_Q \}_{Q\in \cq}$ with  non-negative coefficients $\{b_Q \}_{Q\in \cq}$ 
and $\{c_Q \}_{Q\in \cq}$ each satisfying certain integral conditions. 
We obtain this factorization by applying Maurey's  theorem \cite[Theorem 2]{maurey1974}, discretizing the factorizing density function, and using duality:

\begin{theorem}[Characterization via factorization] \label{theorem_factorization} Let $0<q< 1\le   p<\infty$. Let $\sigma$ and $\omega$ be locally finite Borel measures. Let $\{\lambda_Q\}_{Q\in \cq}$ be a family of non-negative reals, with which the operator $T_\lambda(\cdotroomy \sigma)$ is associated. Then the following assertions hold:

\begin{enumerate}[label=(\roman*)]
\item (Sufficiency) Every factorization $\{\lambda_Q\}_{Q\in \cq}=\{b_Q c_Q\}_{Q\in \cq}$ satisfies the estimate
\begin{equation}
\label{eq_factorization_suffiency}
\begin{split}
&\norm{T_\lambda(\cdotroomy \sigma)}_{L^p(\sigma)\to L^q(\omega)} \\
&\lesssim_{p,q} \Big( \int \big(\sup_{Q\in \cq} b_Q 1_Q \big)^{\frac{q}{1-q}}\domega\Big)^{\frac{1-q}{q}}  \Big(  \int \big( \sum_{Q\in \cq} c_Q \frac{\omega(Q)}{\sigma(Q)} 1_Q \big)^{p'} \dsigma \Big)^{\frac{1}{p'}}.
\end{split}
\end{equation}
\item  (Necessity) There exists a factorization  $\{\lambda_Q\}_{Q\in \cq}=\{b_Q c_Q \}_{Q\in \cq}$ that satisfies the reverse of estimate \eqref{eq_factorization_suffiency}.
\end{enumerate}
\end{theorem}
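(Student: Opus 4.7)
For (i), the plan is a pointwise bound followed by two H\"older inequalities. For any factorization $\lambda_Q = b_Q c_Q$ and any nonnegative $f$, the observation $b_Q 1_Q(x) \le \sup_{R\ni x} b_R$ yields
\begin{equation*}
T_\lambda(f\sigma)(x) \le \big(\sup_Q b_Q 1_Q(x)\big) \sum_Q c_Q \angles{f}^\sigma_Q 1_Q(x).
\end{equation*}
H\"older's inequality in $L^q(\omega)$ with exponents $q/(1-q)$ and $1$ (whose reciprocals sum to $1/q$) then separates the supremum from the sum, after which Fubini rewrites the $L^1(\omega)$-factor as $\int f\sum_Q c_Q\frac{\omega(Q)}{\sigma(Q)} 1_Q \dsigma$, to which H\"older in $L^p(\sigma)$ applies. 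This gives \eqref{eq_factorization_suffiency}.

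For (ii), abbreviate $N := \norm{T_\lambda(\cdotroomy\sigma)}_{L^p(\sigma)\to L^q(\omega)}$. Since $L^p(\sigma)$ is a Banach lattice and $0<q<1$, Maurey's factorization theorem \cite{maurey1974} applied to the positive operator $T_\lambda(\cdotroomy\sigma) : L^p(\sigma) \to L^q(\omega)$ yields a positive density $g$ with $\norm{g}_{L^{q/(1-q)}(\omega)} = 1$ and a positive operator $S : L^p(\sigma)\to L^1(\omega)$ with $\norm{S} \lesssim_{p,q} N$ such that $T_\lambda(f\sigma) = g\cdot Sf$ pointwise $\omega$-a.e. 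Dividing by $g$ and using Fubini,
\begin{equation*}
\norm{Sf}_{L^1(\omega)} = \int f\, \phi_g\, \dsigma, \qquad \phi_g := \sum_Q \lambda_Q \angles{g^{-1}}^\omega_Q \frac{\omega(Q)}{\sigma(Q)} 1_Q,
\end{equation*}
so duality gives $\norm{\phi_g}_{L^{p'}(\sigma)} = \norm{S}_{L^p(\sigma)\to L^1(\omega)} \lesssim_{p,q} N$. Discretizing by $b_Q := 1/\angles{g^{-1}}^\omega_Q$ and $c_Q := \lambda_Q\angles{g^{-1}}^\omega_Q$ gives $\lambda_Q = b_Q c_Q$ and $\sum_Q c_Q\frac{\omega(Q)}{\sigma(Q)}1_Q = \phi_g$, so the second factor on the right of \eqref{eq_factorization_suffiency} is controlled by $N$.

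The hard step is bounding $\norm{\sup_Q b_Q 1_Q}_{L^{q/(1-q)}(\omega)}$. The naive Jensen bound $b_Q \le \angles{g}^\omega_Q$ only gives $\sup_Q b_Q 1_Q \le M^d_\omega g$ (the dyadic maximal function with respect to $\omega$), which is insufficient when $q\le 1/2$ since $M^d_\omega$ is not bounded on $L^{q/(1-q)}(\omega)$ in that range. The remedy is the classical power-mean inequality: for every $\beta > 0$,
\begin{equation*}
b_Q = \big(\angles{g^{-1}}^\omega_Q\big)^{-1} \le \big(\angles{g^\beta}^\omega_Q\big)^{1/\beta},
\end{equation*}
whence $\sup_Q b_Q 1_Q \le \big(M^d_\omega(g^\beta)\big)^{1/\beta}$. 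Choosing $\beta\in(0,q/(1-q))$ makes $s := q/((1-q)\beta) > 1$, and the $L^s(\omega)$-boundedness of $M^d_\omega$ together with a rescaling yields
\begin{equation*}
\norm{\sup_Q b_Q 1_Q}_{L^{q/(1-q)}(\omega)} \lesssim_{p,q} \norm{g}_{L^{q/(1-q)}(\omega)} = 1.
\end{equation*}
Multiplying the two bounds gives the reverse of \eqref{eq_factorization_suffiency}.
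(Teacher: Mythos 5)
Your proof is correct and follows essentially the same route as the paper: Maurey's factorization to obtain the density $g = \Phi^{(1-q)/q}$, discretization via the dyadic $\omega$-averages $b_Q = (\angles{g^{-1}}_Q^\omega)^{-1}$, and control of the Carleson-type supremum by a power-mean inequality followed by the dyadic Hardy--Littlewood maximal inequality on $L^{q/((1-q)\beta)}(\omega)$. Two small stylistic differences: you prove sufficiency (i) directly by a pointwise bound and two H\"older inequalities, whereas the paper derives it as the trivial direction of Maurey plus discretization (your version is slightly cleaner); and you leave the power-mean exponent $\beta$ free in $(0, q/(1-q))$ whereas the paper implicitly takes $\beta=q$, using Jensen for the convex function $t\mapsto t^{-q}$, which is the same device.
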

In the endpoint case $p=1$,  $0<q<1$, the statement of the theorem 
is interpreted  in a natural way:
\begin{equation}
\label{eq_factorization_1}
\begin{split}
&\norm{T_\lambda(\cdotroomy \sigma)}_{L^1(\sigma)\to L^q(\omega)} \\
&\eqsim_{q}\inf_{\substack{\{b_Q\},\{c_Q\} :\\ \{\lambda_Q\}=\{b_Q c_Q \}}} \Big( \int \big(\sup_{Q\in \cq} b_Q 1_Q \big)^{\frac{q}{1-q}}\domega\Big)^{\frac{1-q}{q}}  \Big \Vert \sum_{Q\in \cq} c_Q \, \frac{\omega(Q)}{\sigma(Q)} \, 1_Q  \Big \Vert_{L^\infty(\sigma)}.
\end{split}
\end{equation}

\begin{remark}
In the endpoint case, a different characterization was obtained by Quinn and Verbitsky \cite{quinn2017}: The two-weight inequality \eqref{eq_norminequality} holds for 
$p=1$, $0<q<1$ if and only if  there exists $u \in L^q(\sigma)$, with $u>0$ $\sigma$-almost everywhere, such that 
$T_\lambda(u^q \sigma) \le c \, u$. This characterization works also for more general integral operators. It is related to a sublinear version of Schur's test, and is motivated by applications to sublinear elliptic PDE.  
\end{remark}

We next recall the definition of {\it discrete Littlewood--Paley spaces} ${f^{r,s}(\mu)}$ for exponents 
$0<r\le \infty$, $0<s\le \infty$, and  a locally finite Borel measure $\mu$ on $\br^d$. 
This scale of spaces  was introduced originally by Frazier and Jawerth \cite{frazier1990} 
in the special case where $\mu$ is Lebesgue measure. (In fact, they introduced the closely related space $f^{r,s}_\alpha$ as the discrete space of coefficients in wavelet-type decompositions  of Lizorkin--Triebel spaces $F^{r,s}_\alpha$ via the so-called $\varphi$-transform.)

The discrete Littlewood--Paley norm $\norm{\cdotroomy}_{f^{r,s}(\mu)}$ of a family $\{c_Q\}_{Q \in \cd}$ 
of reals is defined as follows: for exponents $r\in (0,\infty)$ and $s \in (0, \infty]$,  as the mixed $L^r(l^s)$ Lebesgue norm
$$
\norm{c}_{f^{r,s}(\mu)}:= \Big( \int \big(\sum_{Q} |c_Q|^s 1_Q \big)^{\frac{r}{s}} \dmu \Big)^{\frac{1}{r}},
$$
for exponents $r=\infty$ and $s\in(0,\infty)$, as the Carleson norm
$$
\norm{c}_{f^{\infty,s}(\mu)}:= \Big( \sup_Q  \frac{1}{\mu(Q)}\sum_{R\subseteq Q} |c_R|^s \mu(R) \Big)^{\frac{1}{s}};
$$
for other exponents $r, s$, the definition is deferred to Section \ref{sec:preliminaries_littlewoodpaleynorms}.

In terms of discrete Littlewood--Paley norms, the characterization by factorization (Theorem \ref{theorem_factorization}) of the two-weight norm inequality \eqref{eq_norminequality}  for $p\in(1,\infty)$ can be expressed as
\begin{equation*}\label{eq:characterization_factorization_inf}
\norm{T_\lambda(\cdotroomy \sigma)}_{L^p(\sigma)\to L^q(\omega)}\eqsim_{p,q}\inf_{\substack{\{a_Q\},\{b_Q\} :\\ \{\lambda_Q\}=\{a_Q b_Q \}}} \norm{\{a_Q\}_{Q\in\cq}}_{f^{\frac{q}{1-q},\infty}(\omega)}\norm{ \{b_Q \frac{\omega(Q)}{\sigma(Q)}\}_{Q\in\cq}}_{f^{p',1}(\sigma)}.
\end{equation*}

\begin{remark} We observe that:
\begin{enumerate}[label=(\alph*)] 
\item For $p\in(1,\infty)$, the two-weight norm inequality \eqref{eq_norminequality} is equivalent to the multiplier inequality from $f^{p, \infty}(\sigma)$ to $f^{q, 1}(\omega)$:
\begin{equation*}
\label{eq_norminequality_sup}
\norm{T_\lambda(\cdotroomy \sigma)}_{L^p(\sigma)\to L^q(\omega)}\eqsim_{q,p} \sup_{\{d_Q\}} \frac{\norm{\sum_Q \lambda_Q d_Q 1_Q }_{L^q(\omega)}}{\norm{\sup_Q d_Q 1_Q}_{L^p(\sigma)}},
\end{equation*}
as explained in Section \ref{sec_straightforward_conditions}. 
\item 
 In the endpoint case $p=1$,  a modification of  Theorem \ref{theorem_maintheorem} and Theorem \ref{theorem_factorization} 
 for the multiplier inequality from $f^{1, \infty}(\sigma)$ to $f^{q, 1}(\omega)$ 
 states that
$$
\sup_{\{d_Q\}} \frac{\norm{\sum_Q \lambda_Q d_Q 1_Q }_{L^q(\omega)}}{\norm{\sup_Q d_Q 1_Q}_{L^1(\sigma)}}\eqsim_{q}\inf_{\substack{\{b_Q\},\{c_Q\} :\\ \{\lambda_Q\}=\{b_Q c_Q \}}} \norm{\{b_Q\}_{Q\in\cq}}_{f^{\frac{q}{1-q},\infty}(\omega)}\norm{ \{c_Q \frac{\omega(Q)}{\sigma(Q)}\}}_{f^{\infty,1}(\sigma)}.
$$
Note that
\begin{equation*}\label{eq_norm-max}
\sup_{f\in L^1(\sigma)}\frac{\norm{T_\lambda(f\sigma)}_{L^q(\omega)}}{\norm{M^\sigma f}_{L^1(\sigma)}}\leq \sup_{\{d_Q\}} \frac{\norm{\sum_Q \lambda_Q d_Q 1_Q }_{L^q(\omega)}}{\norm{\sup_Q d_Q 1_Q}_{L^1(\sigma)}},
\end{equation*}
where $M^\sigma f:= \sup_Q \angles{f}^\sigma_Q 1_Q$ is the dyadic Hardy--Littlewood maximal operator with respect to the measure $\sigma$.
\item Factorization results in Theorems  \ref{theorem_maintheorem}  and \ref{theorem_factorization} 
can be restated equivalently via interpolation theory between various multipliers for discrete Littlewood--Paley 
spaces and Carleson measures. 
\end{enumerate}
\end{remark}

Next, we consider the sufficiency and necessity of Wolff potential-type conditions. 
Let $1<p<\infty$, and let $\sigma, \omega$ be  locally finite Borel measures on $\br^d$.  
The {\it discrete Wolff potential} $W^{p'}_{\lambda,\sigma}[\omega]$ is defined  by
\begin{equation}\label{eq:def:wolfpotential}
W^{p'}_{\lambda,\sigma}[\omega]:= \sum_Q 1_Q \lambda_Q \Big(\frac{\omega(Q)}{\sigma(Q)}\Big)^{p'-1} \Big(\frac{1}{\omega(Q)} \int \big( \sum_{R:R\subseteq Q} \lambda_R 1_R \big) \domega\Big)^{p'-1}.
\end{equation}
The {\it Wolff potential condition} reads
\begin{equation}\label{eq_wolff_potential_condition}
\int \big(W^{p'}_{\lambda,\sigma}[\omega]\big)^{\frac{(p-1)q}{p-q}} \domega < \infty.
\end{equation}

The discrete two-weight version  \eqref{eq:def:wolfpotential} of the Wolff potential and the   Wolff potential condition \eqref{eq_wolff_potential_condition} 
were introduced by Cascante, Ortega, and Verbitsky \cite{cascante2006} in relation to the so-called Wolff inequality. (See \cite{adams1996} and \cite{kuusi2014} for a discussion of Wolff potentials' history and  applications in harmonic analysis, function spaces, and PDE.)

The discrete Wolff potential can be generalized as follows. For a local integrability parameter $\gamma\in\br\setminus\{0\}$, the local $\gamma$-average $\Lambda_{\gamma,Q}$ of the operator's coefficients is defined by
$$
\Lambda_{\gamma,Q}:= \Big(\frac{1}{\omega(Q)}\int_Q \big(\sum_{R:R\subseteq Q} \lambda_R 1_R \big)^\gamma\domega \Big)^{\frac{1}{\gamma}}.
$$

Using this notation, the discrete Wolff potential 
can be extended to the {\it generalized discrete Wolff potential} $W^{p'}_{\gamma, \lambda,\sigma}[\omega]$ defined by
$$
W^{p'}_{\gamma, \lambda,\sigma}[\omega]:= \sum_Q 1_Q \lambda_Q \Big(\frac{\omega(Q)}{\sigma(Q)}\Big)^{p'-1} \Lambda_{\gamma,Q}^{p'-1},
$$
and, accordingly, the {\it generalized Wolff potential condition} reads
\begin{equation}\label{eq_generalized_wolff_potential_condition}
\int \big(W^{p'}_{\gamma,\lambda,\sigma}[\omega]\big)^{\frac{(p-1)q}{p-q}} \domega < \infty.
\end{equation}
The generalized discrete Wolff potential $W^{p'}_{\gamma,\lambda,\sigma}[\omega]$ is increasing in the integrability parameter $\gamma$, by Jensen's inequality, and coincides with the usual discrete Wolff potential when $\gamma=1$. 

In the case $q\in (1,\infty)$, the two-weight norm inequality \eqref{eq_norminequality} is characterized by a pair of the Wolff potential conditions: the Wolff potential condition \eqref{eq_wolff_potential_condition}  together with its dual counterpart are necessary \cite[Theorem B's first assertion]{cascante2004}, and  sufficient \cite[Theorem 1.3]{tanaka2014} 
for \eqref{eq_norminequality}. In the borderline case $q=1$, $1<p<\infty$, the Wolff potential condition \eqref{eq_wolff_potential_condition} alone is both necessary and 
sufficient \cite{cascante2006}.

By contrast, in the case $q\in(0,1)$, no characterization by Wolff potential conditions is known in the general case.  In the special case that the operator's coefficients satisfy the {\it dyadic logarithmic bounded oscillation } (DLBO) condition,
\begin{equation}\label{eq:dlbo}
\sup_{x\in Q} \big( \sum_{R\subseteq Q} \lambda_R 1_R(x) \big)\lesssim \inf_{x\in Q} \big( \sum_{R\subseteq Q} \lambda_R 1_R(x) \big) \quad\text{for every $Q$},
\end{equation}
the two-weight norm inequality is characterized by the Wolff potential condition \eqref{eq_wolff_potential_condition}, as was shown by Cascante, Ortega, and Verbitsky \cite{cascante2006}. Whereas in this special case the local averages are independent of the local integrability parameter $\gamma$ so that
$$
\Lambda_{\gamma,Q}\eqsim \Lambda_{-\infty,Q}:= \inf_{x\in Q} \big( \sum_{R\subseteq Q} \lambda_R 1_R(x) \big)\quad\text{for every $\gamma\in\br\setminus\{0\}$},$$ in the general case it is not so, and the integrability parameter turns out to be decisive, as Wolff potential conditions fail to be sufficient or necessary depending on it:

\begin{theorem}[Sufficiency and necessity of generalized Wolff potential conditions depend on $\gamma$]\label{theorem_sufficiency_and_necessity_wolff}Let $0<q< 1< p<\infty$. Let $\sigma$ and $\omega$ be locally finite Borel measures. Then the following assertions hold:
\begin{enumerate}[label=(\roman*)]
\item (Sufficiency) For every integrability parameter $\gamma \in[1,\infty)$, the generalized Wolff potential condition \eqref{eq_generalized_wolff_potential_condition} is sufficient for the two-weight norm inequality \eqref{eq_norminequality}. For each parameter $\gamma\in(0,q)$, it is not sufficient in general. 
\item (Necessity) For every integrability parameter $\gamma \in(0,q)$, the generalized Wolff potential condition \eqref{eq_generalized_wolff_potential_condition} is necessary for \eqref{eq_norminequality}. For each parameter $\gamma \in[q,\infty)$, it is not necessary in general.
\end{enumerate}
\end{theorem}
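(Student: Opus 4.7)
The plan is to combine the characterization via auxiliary coefficients (Theorem~\ref{theorem_maintheorem}) with direct testing and with explicit counterexamples on a dyadic tree. As a first reduction, $\Lambda_{\gamma,Q}$ is non-decreasing in $\gamma$ by Jensen's inequality, so the Wolff integral in \eqref{eq_generalized_wolff_potential_condition} is non-decreasing in $\gamma$; it therefore suffices to establish sufficiency at $\gamma=1$, necessity at $\gamma$ arbitrarily close to $q$ from below, and to exhibit one counterexample in each failure range.

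For sufficiency at $\gamma=1$, I would apply Theorem~\ref{theorem_maintheorem}(i) with a carefully chosen auxiliary family. Matching the integrability factor of \eqref{eq_upperb} to the Wolff integrand forces the heuristic choice $a_Q \approx \Lambda_{1,Q}/\lambda_Q$, for which that factor reduces precisely to the Wolff integral. The obstruction is that the Carleson condition $\sup_Q \sigma(Q)^{-1}\sum_{R\subseteq Q}a_R^{-1}\sigma(R)<\infty$ need not hold for this literal choice; to repair it I would regularize $a_R$ via a stopping-time / principal-cube construction for the ratio $\Lambda_{1,\cdot}/\lambda_{\cdot}$, so that $a_R$ is essentially constant on each principal strip. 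The geometric packing of the principal cubes then bounds the Carleson quantity by a universal constant, while the modification leaves the integrability factor dominated by the Wolff integral. For necessity on $\gamma\in(0,q)$, the elementary testing $f=1_Q$ in \eqref{eq_norminequality} gives $\int_Q \rho_Q^q \domega \leq \norm{T_\lambda(\cdotroomy\sigma)}_{L^p(\sigma)\to L^q(\omega)}^q\,\sigma(Q)^{q/p}$, equivalently $\Lambda_{q,Q}\leq C\,\sigma(Q)^{1/p}\omega(Q)^{-1/q}$, and Jensen extends this pointwise bound to $\Lambda_{\gamma,Q}$ for $\gamma\leq q$. Since this pointwise bound alone is strictly weaker than the integral Wolff condition, I would combine it with the necessity half of Theorem~\ref{theorem_maintheorem}(ii): the auxiliary family $\{a_Q^\ast\}$ produced there supplies Carleson and integrability data, and a H\"older/Jensen step with exponent $q/\gamma>1$---this is where the hypothesis $\gamma<q$ is used---converts the pointwise testing bound into the desired integral Wolff bound.

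Finally, the two sharpness counterexamples I would build on standard binary dyadic-tree templates. For non-sufficiency when $\gamma\in(0,q)$, take $\omega$ atomic and sparsely concentrated so that for small $\gamma$ the normalized $L^\gamma(\omega)$-average $\Lambda_{\gamma,Q}$ is suppressed by the vanishing of $\rho_Q$ outside the atoms, making the Wolff integral finite, while testing on $f=1_Q$ shows the two-weight inequality fails; for non-necessity when $\gamma\in[q,\infty)$, the construction is dual, arranging $\Lambda_{\gamma,Q}$ to be dominated by the supremum of $\rho_Q$ on a negligible set, blowing up the Wolff integral while the two-weight inequality still holds. I expect the sufficiency step (the stopping-time regularization) to be the main obstacle: simultaneously preserving the Carleson packing and the domination of the integrability factor by the Wolff integral requires careful bookkeeping, analogous to DLBO-free sparse-domination arguments used in related contexts.
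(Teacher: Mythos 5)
Your monotonicity reduction (Jensen's inequality makes $\Lambda_{\gamma,Q}$ non-decreasing in $\gamma$, so sufficiency at $\gamma=1$ and necessity near $q^-$ are the crux) is correct and matches the paper's setup. Beyond that, however, each of the four sub-statements is left as a sketch with a genuine gap.

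\emph{Sufficiency at $\gamma=1$.} Your heuristic $a_Q\approx\Lambda_{1,Q}/\lambda_Q$ does indeed make the integrability factor of \eqref{eq_upperb} collapse to the Wolff integral, but you have not shown that the Carleson quantity $\sup_Q \sigma(Q)^{-1}\sum_{R\subseteq Q}(\lambda_R/\Lambda_{1,R})\sigma(R)$ is finite, and a stopping-time regularization is only asserted, not carried out; there is no a priori sparsity for $\Lambda_{1,\cdot}/\lambda_{\cdot}$ on which principal-cube packing could lean. The paper sidesteps the Carleson obstruction entirely by using a different factorization (Theorem \ref{theorem_factorization} together with the three-condition Lemma \ref{lemma:subconditions_trivial_factorization}) and a different auxiliary family, namely the cumulative sum
$a_Q:=\big(\sum_{R\supseteq Q}\lambda_R(\omega(R)/\sigma(R))^{p'-1}\Lambda_R^{p'-1}\big)^{(p-1)(1-q)/(p-q)}$, which is automatically monotone under cube inclusion ($a_Q\geq a_R$ for $Q\subseteq R$). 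Monotonicity makes the Carleson-type condition \eqref{condition___ca} reduce to $\sum_{R\subseteq Q}\lambda_R\omega(R)\leq c_Q\sigma(Q)$, satisfied by $c_Q:=\omega(Q)\Lambda_Q/\sigma(Q)$, while summation by parts identifies \eqref{condition___cb} with the Wolff integral. This is the idea you are missing: choose $a_Q$ monotone from the start rather than regularizing afterwards.

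\emph{Necessity for $\gamma\in(0,q)$.} Testing $f=1_Q$ and Jensen do give the pointwise bound $\Lambda_{\gamma,Q}\lesssim \sigma(Q)^{1/p}\omega(Q)^{-1/q}$, but you yourself note this is strictly weaker than the integral Wolff condition, and the proposed combination with Theorem \ref{theorem_maintheorem}(ii) is not worked out; as stated there is no mechanism converting a supremum bound into the required integral bound. The paper's argument is essentially different: it takes the two necessary estimates \eqref{eq_necessity_equiv} (the multiplier reformulation, Lemma \ref{lemma_immediateconditions}(i)) and \eqref{eq_necessity_stein} (Stein's inequality, Lemma \ref{lemma_immediateconditions}(v), which is where $\gamma<q$ is used), dualizes all three statements into discrete Littlewood--Paley norms via Proposition \ref{proposition_normduality}, and combines them by H\"older's inequality with explicitly matched exponents $r,s$. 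Your ``H\"older/Jensen step with exponent $q/\gamma>1$'' gestures in the right direction but does not reproduce this structure.

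\emph{Counterexamples.} Both sketches are too vague to verify. The paper's non-sufficiency example (Section \ref{section:unsufficiency}) uses a decreasing chain $P_0\supseteq P_1\supseteq\cdots$ with log-perturbed power-law choices $\lambda_{P_j}=j^{\alpha-1}\log(j+2)^{-\delta}$, $\omega(E_j)=(j+1)^{-\beta-1}$, $\sigma(P_j)=\log(j+2)^{-\epsilon}$, then tunes $\alpha q=\beta$, $q\delta=1$, $\epsilon<1$ so that the single-cube testing quantity \eqref{eq:condition_necessary} diverges while the $\gamma<q$ Wolff integral converges. The non-necessity example (Proposition \ref{prop_unnecessityforlargeparameters}) uses an increasing chain and a telescoping lower bound \eqref{equation:lowerestimate}, with $\omega(E_j)=2^{j+1}$ and $\lambda_{P_j}^q\omega(P_j)=(j+1)^{-\beta}$, $\alpha\beta<1$; the dichotomy is finiteness of $\sum_j\lambda_{P_j}^q\omega(P_j)$ against divergence of $\sum_j(\omega(E_j)/\omega(P_j))(\lambda_{P_j}^q\omega(P_j))^\alpha$. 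Your descriptions (``atomic, sparsely concentrated,'' ``dominated on a negligible set'') do not exhibit such a dichotomy, and in particular for the non-sufficiency direction the sensitivity of $\Lambda_{\gamma,Q}$ to the log-perturbation is what makes the gap between $\gamma<q$ and $\gamma=q$ visible; atomic sparsity alone would not produce it.
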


\begin{remark}[Riesz potentials] Our sufficient, necessary, or equivalent conditions  can also be applied to continuous operators that are comparable with positive model dyadic operators. 

Such comparison is well-known  for Riesz potentials.  For $\alpha\in(0,d)$, the {\it Riesz potential} of order $\alpha$, or {\it fractional integral}, $I_\alpha(\cdotroomy \sigma)$ is defined by
$$
I_\alpha(f\sigma)(x):= \int_{\br^d} \frac{f(y)}{\abs{x-y}^{d-\alpha}} \dsigma(y), 
$$
and its model dyadic operator $I^{\cd}_\alpha( \cdotroomy \sigma)$ by
$$
I^{\cd}_\alpha(f\sigma)(x):= \sum_{Q\in \cd} \frac{1_Q}{\abs{Q}^{1-\frac{\alpha}{d}}} \int_{\br^d} f(y) \dsigma(y).
$$

From the basic principle of dyadic analysis that generic cubes can be approximated by dyadic cubes of shifted dyadic systems 
$$
\cd^t:=\{2^k([0,1)^d +j + t): k\in \bz, j \in \bz^d\}, \quad \text{ $t\in \br^d$},
$$
it follows that the Riesz potential can be controlled by its model dyadic operators $I^{\cd^t}_\alpha(\cdotroomy\sigma)$ with 
 $\cd^t$ in place of $\cd$.  Consequently, the normwise comparison
$$
\norm{I_\alpha(\cdotroomy\sigma)}_{L^p(\sigma)\to L^q(\omega)}\eqsim \sup_{t\in [0,1]^d}\norm{I^{\cd^t}_\alpha(\cdotroomy\sigma)}_{L^p(\sigma)\to L^q(\omega)}
$$
holds \cite{sawyer1992,cascante2006}. The model dyadic operator $I^{\cd^t}_\alpha( \cdotroomy \sigma)$ is precisely the positive dyadic operator $T_{\lambda}( \cdotroomy \sigma)$ associated with the coefficients $\lambda:=\{\frac{\sigma(Q)}{\abs{Q}^{1-\frac{\alpha}{d}} }\}_{Q\in\cd^t}$. Thereby, our sufficient, necessary, or equivalent   conditions apply to the Riesz potential by imposing them for its model dyadic operators uniformly over the dyadic systems $\cd^t$.

Notice that in the special case that the measure $\sigma$ is Lebesgue measure, the model dyadic operator's coefficients $\lambda:=\{ \abs{Q}^{\frac{\alpha}{d}}\}_{Q\in\cd}$ satisfy the DLBO condition. In this case, the weighted norm inequality for Riesz potentials   was characterized previously by Cascante, Ortega and Verbitsky \cite{cascante2006} in terms of Wolff 
potentials, and by Maz'ya and Netrusov \cite[Sec. 11.6.1]{mazya2011} in terms of capacities. 
\end{remark}

This paper is organized as follows. In Section \ref{sec_preliminaries}, basic properties of  discrete Littlewood--Paley spaces are summarized, and Maurey's factorization theorem is discussed. In Section \ref{sec_factorization}, we prove the factorization characterization (Theorem \ref{theorem_factorization}), and obtain equivalent conditions (in particular, Theorem \ref{theorem_maintheorem}) in terms of auxiliary coefficients by using factorizations of the Littlewood--Paley spaces. In Section \ref{sec_generalized_wolff},  we prove that the generalized Wolff potential condition is necessary for small and  sufficient for large integrability parameters (one half of Theorem \ref{theorem_sufficiency_and_necessity_wolff}). By constructing concrete counterexamples, we prove that the condition is nevertheless not necessary for large parameters and not sufficient for small parameters (the other half of Theorem \ref{theorem_sufficiency_and_necessity_wolff}). In the Appendix, we summarize various integral conditions used in the paper.

We conclude the introduction by stating two open problems. In the case $q\in(0,1)$, no explicit integral conditions that characterize the two-weight norm inequality in its full generality are known, so we pose the problem: 
\begin{problem}Let $0<q< 1 \le p<\infty$. Let $\sigma$ and $\omega$ be locally finite Borel measures. Can the two-weight norm inequality \eqref{eq_norminequality} be characterized by some explicit integral conditions?
\end{problem}
In the scale of generalized Wolff potential conditions, it is unknown how large the gap between necessity and sufficiency is, so we pose the problem: 

\begin{problem}Let $0<q< 1 < p<\infty$. Let $\sigma$ and $\omega$ be locally finite Borel measures. Is the  generalized Wolff potential condition \eqref{eq_generalized_wolff_potential_condition}, 
which depends on the integrability parameter $\gamma$, sufficient for  \eqref{eq_norminequality} for some integrability parameter $\gamma \in [q,1)$? (Note that the sufficiency for $\gamma=1$ is contained in Theorem \ref{theorem_sufficiency_and_necessity_wolff}.)

\end{problem}

This research was conducted during the first author's visit to the Mathematics Department at the University of Missouri. He thanks the department for its hospitality.

\section{Preliminaries}\label{sec_preliminaries}

\subsection{Maurey's factorization}
A {\it lattice } $(E,\leq)$ is a set equipped with a partial order relation $\leq$ such that for every pair $e_1,e_2\in E$ there exists the least upper bound $e_1\vee e_2 \in E$ and the greatest lower bound $e_1\wedge e_2 \in E$.

\begin{definition}[Banach lattice]
A {\it Banach lattice} $(E,\norm{\cdotroomy}_E,\leq)$ is both a real Banach space $(E,\norm{\cdotroomy}_E)$ and a lattice $(E,\leq)$ so that both structures are compatible: 
\begin{itemize}
\item[i)]For every $e_1,e_2,e_3\in E$, we have that $e_1\leq e_2$ implies
$e_1+e_3\leq e_2+e_3$. 
\item[ii)]For every  $r\in\br$ and $e\in E$, we have that $r\geq 0$ and $e\geq 0$ implies $re\geq 0$.  
\\
The {\it positive part} $e_+$ of a vector $e\in E$ is defined by $e_+:=e\vee 0$, the {\it negative part} $e_-$ by $e_-:=-e\vee 0 $, and the {\it  absolute value} $\abs{e}$ by $\abs{e}:=e\vee -e$. 
\item[iii)]For every $e\in E$, we have that $\norm{e\,}_E=\norm{\,\abs{e}\,}_E$. For every $e_1,e_2\in E$, we have that $0\leq e_1\leq e_2$ implies $\norm{e_1\,}_E\leq \norm{e_2\,}_E$.
\end{itemize}
\end{definition}
An operator $T:E\to E'$ between Banach lattices $E$ and $E'$ is called ${\it positive}$ if for every $e\in E$ with $e\geq 0$ we have $T(e)\geq 0$, or equivalently, for every $e\in E$ we have $\abs{T(e)}\leq T(\abs{e})$.

\begin{theorem}[Maurey's factorization]\label{theorem:maurey}Let $L^q$ denote the Lebesgue space associated with  a measure space $(X,\cf,\mu)$ and an exponent $q\in(0,1)$. Let $E$ be a Banach lattice. Let $T: E\to L^q$ be a positive linear operator. Then the following assertions are equivalent:
\begin{enumerate}[label=(\roman*)]
\item\label{assertion_maurey_norm_inequality} There exists  a positive constant  $C$ such that the inequality 
$$
\big( \int \abs{T(e)}^q \dmu \big)^{\frac{1}{q}} \leq C \norm{e}_E
$$
holds for every $e\in E$.
\item \label{assertion_maurey_densityfunction}There exists a measurable function $\Phi$ with $\Phi\geq 0$, $\int \Phi \dmu \leq 1$, and $\mu$-almost everywhere $\{\Phi=0\}\subseteq \{T(e)=0\}$ for every $e\in E$, such that
$$
\int \abs{T(e)} \Phi^{-\frac{1-q}{q}}\dmu \leq C \norm{e}_E
$$
for every $e\in E$, where $C$ is a positive constant 
which does not depend on $e$.
\end{enumerate}
Furthermore, the least constants in these estimates coincide.
\end{theorem}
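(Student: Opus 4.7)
I plan to prove the two implications separately. The direction (ii) $\Rightarrow$ (i) is a direct application of H\"older's inequality: factoring $\abs{T(e)}^q = (\abs{T(e)}\Phi^{-(1-q)/q})^q \cdot \Phi^{1-q}$ pointwise $\mu$-a.e.\ (with the convention that this is $0$ on $\{\Phi = 0\}$, consistent with the support assumption forcing $T(e)=0$ there), I apply H\"older with conjugate exponents $1/q$ and $1/(1-q)$:
\begin{equation*}
\int \abs{T(e)}^q \dmu \le \Big(\int \abs{T(e)}\,\Phi^{-(1-q)/q} \dmu\Big)^q \Big(\int \Phi \dmu\Big)^{1-q} \le C^q \norm{e}_E^q,
\end{equation*}
and taking $q$-th roots yields (i) with the same constant.

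For (i) $\Rightarrow$ (ii), my starting point is the equality case of the preceding H\"older computation, which for $f \ge 0$ in $L^q(\mu)$ gives the dual representation
\begin{equation*}
\norm{f}_{L^q(\mu)} = \inf_{\Phi \ge 0,\ \int \Phi \dmu \le 1} \int f\,\Phi^{-(1-q)/q} \dmu,
\end{equation*}
the infimum being attained at $\Phi = f^q / \!\int f^q \dmu$. Positivity of $T$ together with $\norm{\abs{e}}_E = \norm{e}_E$ allows me to restrict attention to $e \ge 0$, so $T(e) \ge 0$, and both conditions rephrase in terms of the functional $F(e, \Phi) := \int T(e)\,\Phi^{-(1-q)/q}\dmu$: (i) reads $\sup_e \inf_\Phi F(e,\Phi) \le C$ while (ii) reads $\inf_\Phi \sup_e F(e,\Phi) \le C$. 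The goal is therefore to exchange the supremum and infimum.

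The exchange is powered by the observation that $F$ is linear (hence concave) in $e$ and convex in $\Phi$ on the positive cone of $L^1(\mu)$, since $t \mapsto t^{-(1-q)/q}$ is convex on $(0,\infty)$ and $T(e) \ge 0$. My plan is to apply the Ky Fan--Sion minimax theorem to conclude. The resulting $\Phi$ can be chosen to satisfy the support condition $\{\Phi=0\} \subseteq \{T(e)=0\}$ (for every $e$) automatically, since it will be built as a limit of convex combinations of $\abs{T(e_j)}^q$'s that are adapted to the joint support of the relevant $T(e)$'s; the equality of least constants is then traced through both directions.

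The main obstacle is the non-compactness of the ambient sets: neither the unit ball of $E$ nor the set of probability densities in $L^1(\mu)$ is compact in natural topologies. I expect to circumvent this in the standard way, by first reducing to finite test collections $\{e_1,\ldots,e_n\}$ and $\Phi$ supported on a precompact exhaustion of $X$, so that minimax applies on compact convex sets, and then passing to a limit. A fallback route, if the direct minimax application meets technical trouble, is a Hahn--Banach separation argument in $L^1(\mu)$: failure of (ii) would permit separating the convex cone generated by $\{\abs{T(e)}^q : \norm{e}_E \le 1\}$ from a suitably chosen half-space, producing a dual element contradicting the $L^q$-bound in (i).
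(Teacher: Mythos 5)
The paper does not prove this theorem: it explicitly notes that (ii)~$\Rightarrow$~(i) is immediate from H\"older's inequality and that the converse ``is the main point of the theorem,'' citing Maurey's original book and Pisier's alternative proof. So there is no internal proof to compare against; I will evaluate your proposal on its own terms against the cited literature.

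Your easy direction (ii)~$\Rightarrow$~(i) is exactly right and is what the paper gestures at. Your hard direction follows Pisier's duality/minimax route, and the structural reductions are correct: the dual representation of the $L^q$ quasi-norm as an infimum over densities, the reduction to $e\ge 0$ by lattice positivity, the reformulation of the two assertions as $\sup_e\inf_\Phi F$ and $\inf_\Phi\sup_e F$, and the concavity/convexity of $F$ in each slot. The trouble is that the entire content of the theorem lives in the minimax step, and your proposal leaves it at the level of ``I expect to circumvent this in the standard way.'' The set of densities $\{\Phi\ge 0: \int\Phi\,d\mu\le 1\}$ is not weakly compact in $L^1$, the functional $\Phi\mapsto\int f\,\Phi^{-(1-q)/q}d\mu$ is only weakly lower semicontinuous (not continuous), and extracting a usable limit $\Phi$ from a net of $\Phi_n$'s built over increasing finite families $\{e_1,\dots,e_n\}$ requires either a compactness argument in $L^0$ with the topology of convergence in measure (Nikishin's device) or a normal-families-type exhaustion argument (Pisier's). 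None of this is automatic, and the claim that the support condition $\{\Phi=0\}\subseteq\{T(e)=0\}$ for \emph{all} $e$ comes out ``automatically'' needs justification: a finite stage only controls the finitely many $T(e_j)$'s, and one must argue that the limiting $\Phi$ is strictly positive $\mu$-a.e.\ on the essential union of all supports.

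Your Hahn--Banach fallback is also stated too loosely. The set $\{\lvert T(e)\rvert^q:\norm{e}_E\le 1\}$ is not convex, and $t\mapsto t^q$ is concave, so convex combinations of these functions are not dominated by any single $\lvert T(e)\rvert^q$; the separation must instead be run on a carefully constructed solid convex set, and the positivity of $T$ enters precisely at the point where one verifies that finite convex combinations still obey a uniform $L^1$-bound. In short: the proposal reproduces Pisier's outline, which the paper cites, and the reductions it carries out are correct; but the compactness/limit passage that makes the minimax exchange legitimate is the real content, and it is asserted rather than argued.
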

Note that assertion \ref{assertion_maurey_densityfunction} implies assertion \ref{assertion_maurey_norm_inequality} by H\"older's inequality, and hence the converse is the main point of the theorem. This theorem was proven by Maurey \cite[Theorem 2]{maurey1974}; an alternative proof was given by Pisier \cite[Remark on page 111]{pisier1986}.  In fact, in Maurey's book \cite[Theorem 2]{maurey1974}, the factorization theorem is (after renaming exponents and functions) phrased as follows:

\begin{theorem}[Maurey's factorization rephrased]\label{theorem:maurey_another} Let $L^q$ denote the Lebesgue space associated with  a measure space $(X,\cf,\mu)$ and an exponent $q\in(0,1)$. Let $\{f_i\}_{i\in I}$ be a family of measurable functions indexed by an index set $I$. Then the following assertions are equivalent:
\begin{enumerate}[label=(\roman*)]
\item\label{assertion_maurey_norm_inequality_family} There exists a positive constant $C$ such that the  inequality 
$$
\big( \int \big(\sum_{i\in I} \abs{\alpha_i}\abs{ f_i}\big)^q \dmu \big)^{\frac{1}{q}} \leq C \sum_{i\in I} \abs{\alpha_i}
$$
holds for every finitely supported family $\{a_i\}_{i\in I}$ of reals.
\item \label{assertion_maurey_densityfunction_family}There exists a measurable function $\Phi$ with $\Phi\geq 0$, $\int \Phi \dmu \leq 1$, and $\mu$-almost everywhere $\{\Phi=0\}\subseteq \{f_i=0\}$ for every $i\in I$, such that
$$
\int \abs{f_i} \Phi^{-\frac{1-q}{q}}\leq C
$$
for every $i\in I$.
\end{enumerate}
Furthermore, the least constants in these estimates coincide.
\end{theorem}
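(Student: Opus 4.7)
The plan is to derive Theorem~\ref{theorem:maurey_another} from Theorem~\ref{theorem:maurey} by choosing the appropriate Banach lattice and positive operator, together with a direct H\"older argument for the converse direction. Since both assertions depend on the $f_i$ only through $|f_i|$, I may assume without loss of generality that $f_i \ge 0$ for every $i \in I$.

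For the implication (i) $\Rightarrow$ (ii), I set $E := \ell^1(I)$ with norm $\|\alpha\|_E := \sum_{i\in I} |\alpha_i|$ and pointwise partial order, which makes $E$ a Banach lattice. I define $T : E \to L^q(\mu)$ on finitely supported families by $T(\alpha) := \sum_{i \in I} \alpha_i f_i$. This $T$ is linear and, because $f_i \ge 0$, positive. Assertion~(i) of Theorem~\ref{theorem:maurey_another} gives $\|T(|\alpha|)\|_{L^q(\mu)} \le C \|\alpha\|_E$ for every finitely supported $\alpha$; combining the pointwise bound $|T(\alpha)| \le T(|\alpha|)$ with the density of finitely supported families in $\ell^1(I)$ and the completeness of $L^q(\mu)$, $T$ extends uniquely to a continuous positive operator on all of $E$ satisfying assertion~(i) of Theorem~\ref{theorem:maurey}. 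Applying Theorem~\ref{theorem:maurey} yields a density $\Phi \ge 0$ with $\int \Phi \dmu \le 1$ and $\{\Phi = 0\} \subseteq \{T(e) = 0\}$ for every $e \in E$, satisfying $\int |T(e)| \Phi^{-(1-q)/q} \dmu \le C \|e\|_E$. Specializing to the Kronecker family $e = \delta_i$ gives $T(\delta_i) = f_i$ and $\|\delta_i\|_E = 1$, yielding both the support condition $\{\Phi = 0\} \subseteq \{f_i = 0\}$ and the integrability bound $\int f_i \Phi^{-(1-q)/q} \dmu \le C$ of assertion~(ii).

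For the converse (ii) $\Rightarrow$ (i), I apply H\"older's inequality with conjugate exponents $1/q$ and $1/(1-q)$, using the support condition together with the convention $\tfrac{0}{0} := 0$:
\[
\int \Big(\sum_i |\alpha_i| f_i\Big)^q \dmu = \int \Big(\sum_i |\alpha_i| f_i \Phi^{-(1-q)/q}\Big)^q \Phi^{1-q} \dmu \le \Big(\sum_i |\alpha_i| \int f_i \Phi^{-(1-q)/q} \dmu\Big)^q \Big(\int \Phi \dmu\Big)^{1-q} \le \Big(C \sum_i |\alpha_i|\Big)^q.
\]
Tracking constants through both directions then shows the least constants coincide.

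The main ``obstacle'' here is really bookkeeping rather than a substantial analytic difficulty: verifying that Theorem~\ref{theorem:maurey} genuinely applies in this setup, chiefly the continuity extension of $T$ from finitely supported families to all of $\ell^1(I)$ in the quasi-Banach target $L^q(\mu)$, and the correct passage of the support condition from the abstract statement to the family-by-family statement via the Kronecker test vectors $\delta_i$.
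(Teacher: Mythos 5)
Your proof is correct and takes essentially the same route as the paper, which deduces Theorem~\ref{theorem:maurey_another} from Theorem~\ref{theorem:maurey} by taking $E=\ell^1(I)$ and $T(\alpha):=\sum_{i\in I}\alpha_i\abs{f_i}$, and observes that (ii)~$\Rightarrow$~(i) is just H\"older's inequality. The only additional details you supply (the Kronecker test vectors $\delta_i$, and the extension of $T$ to all of $\ell^1(I)$ via density and completeness of the quasi-Banach target) are sound and match the intent of the paper's one-line reduction.
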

Theorem \ref{theorem:maurey} and Theorem \ref{theorem:maurey_another} are clearly equivalent: 
\begin{itemize}
\item Applying Theorem \ref{theorem:maurey} to the Banach lattice $E:=\ell^1(I)$ and the positive linear operator $T(\{a_i\}_{i\in I}):=\sum_{i\in I} \alpha_i \abs{f_i}$ yields Theorem \ref{theorem:maurey_another}.
\item Conversely, applying Theorem \ref{theorem:maurey_another} to the family $\{\frac{T(e)}{\norm{e}_E}\}_{e\in E\setminus\{0\}}$ of functions yields Theorem \ref{theorem:maurey} because in this case, by the assumed positivity and linearity of the operator $T$, we have
$$
\sum_{e\in E\setminus\{0\}} \abs{\alpha_e} \, \abs{\frac{T(e)}{\norm{e}_E}}=  \sum_{e\in E\setminus\{0\}} \abs{\frac{T(\abs{\alpha_e}e)}{\norm{e}_E}}\leq \sum_{e\in E\setminus\{0\}} T( \frac{\abs{\alpha_e} \, \abs{e}}{\norm{e}_E})=T( \sum_{e\in E\setminus\{0\}} \frac{\abs{\alpha_e}\, \abs{e}}{\norm{e}_E}) 
$$
and hence assertion (i) of Theorem \ref{theorem:maurey_another} is equivalent to assertion (i) of Theorem \ref{theorem:maurey}
\end{itemize}

\subsection{Discrete Littlewood--Paley spaces}\label{sec:preliminaries_littlewoodpaleynorms}
\begin{definition}[Discrete Littlewood--Paley norms]Let $\mu$ be a locally finite Borel measure, and let $p\in(0,\infty]$ and $q\in(0,\infty]$. Let $a:=\{a_Q\}$ be a family of non-negative reals. The {\it discrete Littlewood--Paley norm} $\norm{\cdotroomy}_{f^{p,q}(\mu)}$ is defined  as follows:
\begin{itemize}

\item For $p\in(0,\infty)$ and $q\in\br\setminus\{0\}$,
$$
\norm{a}_{f^{p,q}(\mu)}:= \Big(\int \big(  \sum_Q a_Q^q 1_Q\big)^{\frac{p}{q}} \dmu\Big)^{\frac{1}{p}}.
$$
\item For $p\in(0,\infty)$ and $q=\infty$,
$$
\norm{a}_{f^{p,\infty}(\mu)}:= \Big(\int \big( \sup_Q a_Q 1_Q \big)^p \dmu\Big)^{\frac{1}{p}}.
$$
\item For $p=\infty$ and $q\in\br\setminus\{0\}$,
$$
\norm{a}_{f^{\infty,q}(\mu)}:= \sup_{Q }\Big( \frac{1}{\mu(Q)} \sum_{R\subseteq Q} a_R^q \mu(R) \Big)^{\frac{1}{q}}.
$$
\item For $p=\infty$ and $q=\infty$,
$$
\norm{a}_{f^{\infty,\infty}(\mu)}:=\sup_Q a_Q.
$$
\end{itemize}
\end{definition}
 We use the notation ${f^{p,q}}$  in place of $f^{p,q}(\mu)$ if the measure $\mu$ has been specified in the context. 

Note that, except at the endpoint $p=\infty$, the discrete Littlewood--Paley norm $\norm{\cdotroomy}_{f^{p,q}}$ is just the mixed Lebesgue norm $\norm{\cdotroomy}_{L^p( \ell^q)}$, whereas at the endpoint $p=1$, it  is the Carleson norm instead of the mixed Lebesgue norm $\norm{\cdotroomy}_{L^\infty (\ell^q)}$.

Note also that the discrete Littlewood--Paley norm has the scaling property: 
\begin{equation}
\norm{\{a_Q^s\}}_{f^{p,q}}=\norm{\{a_Q\}}_{f^{sp,sq}}\quad \text{for every $s\in(0,\infty)$.}
\end{equation}

The discrete Littlewood--Paley norm can be computed via  duality \cite[Theorem 4  and Remark 5]{verbitsky1996}:

\begin{proposition}[Computing norm by duality]\label{proposition_normduality} Let $p,q\in[1,\infty]$. Let $\mu$ be a locally finite Borel measure.  Then we have
$$
\norm{a}_{f^{p,q}}\eqsim_{p,q} \sup_{\norm{b}_{f^{p',q'}}\leq 1} \sum_Q a_Q b_Q \mu(Q).
$$
\end{proposition}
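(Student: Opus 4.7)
The proposition is a discrete analogue of the classical tent--space duality of Coifman--Meyer--Stein (and in the special case of Lebesgue measure, of Frazier--Jawerth); my plan is to follow the standard route, splitting into cases depending on whether either exponent is at an endpoint. The starting observation is that
$$
\sum_Q a_Q b_Q \mu(Q) = \int \sum_Q (a_Q 1_Q)(b_Q 1_Q) \dmu,
$$
so the bilinear pairing is the canonical pairing of the families $\{a_Q 1_Q\}_Q$ and $\{b_Q 1_Q\}_Q$, whose mixed Lebesgue norms coincide with $\norm{a}_{f^{p,q}(\mu)}$ and $\norm{b}_{f^{p',q'}(\mu)}$ respectively whenever $p<\infty$.

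For the generic range $p,q\in(1,\infty)$, the claim is immediate from the classical $L^p(\mu;\ell^q)$--$L^{p'}(\mu;\ell^{q'})$ duality: one direction is H\"older for the mixed norm (first H\"older in $\ell^q$ pointwise, then in $L^p(\mu)$), and the other direction follows by testing against the standard extremal sequence $b_Q \sim a_Q^{q-1} \phi(x)^{p-q}$ with $\phi=(\sum_Q a_Q^q 1_Q)^{1/q}$ on each dyadic cube, rescaled to have unit $f^{p',q'}$ norm. The scalar endpoints $p=q=1$ or $p=q=\infty$ reduce to a trivial $L^1$--$L^\infty$ duality.

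The genuinely non-routine case is the Carleson endpoint: $p=1$ with $q\in(1,\infty]$ (equivalently, by symmetry, $p=\infty$ with $q\in[1,\infty)$), where the $f^{\infty,q'}$ norm uses a Carleson-type average rather than a pointwise supremum. The upper bound $\sum_Q a_Q b_Q \mu(Q)\lesssim \norm{a}_{f^{1,q}}\norm{b}_{f^{\infty,q'}}$ I would prove by a principal-cubes argument applied to $\phi(x):=(\sum_Q a_Q^q 1_Q(x))^{1/q}$. Namely, build a sparse family of stopping cubes $\{S\}$ maximal with $\angles{\phi}^\mu_S > 2\angles{\phi}^\mu_{\pi S}$, split the dyadic tree into the disjoint ``trees'' $\mathcal{T}(S)$, apply H\"older with exponents $q,q'$ on each $\mathcal{T}(S)$, bound $\sum_{Q\in\mathcal{T}(S)} a_Q^q\mu(Q)\lesssim \angles{\phi}^\mu_S{}^{q}\mu(S)$ from the principal-cube selection, and $\sum_{Q\in\mathcal{T}(S)} b_Q^{q'}\mu(Q)\le \norm{b}_{f^{\infty,q'}}^{q'}\mu(S)$ from the Carleson hypothesis. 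Summing over stopping cubes and using sparseness converts $\sum_S \angles{\phi}^\mu_S \mu(S)$ into $\int \phi \dmu=\norm{a}_{f^{1,q}}$. The lower bound is obtained by constructing a near-extremal $b$ adapted to the same stopping tree: set $b_Q$ proportional to $a_Q^{q-1}$ on each $\mathcal{T}(S)$, normalized so that $\sum_{R\subseteq S} b_R^{q'}\mu(R)\le \mu(S)$ (i.e.\ unit Carleson norm), and verify that the pairing recovers $\norm{a}_{f^{1,q}}$.

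The principal-cubes/Carleson-embedding step at the $L^1$ endpoint is the only substantive obstacle; everything else collapses to standard mixed-norm duality. An alternative route avoiding stopping times is via an atomic decomposition of $f^{1,q}(\mu)$ into unit-Carleson atoms localized to single dyadic cubes, which makes the duality with the Carleson norm automatic; either route suffices, and both are essentially contained in the Frazier--Jawerth analysis, whose extension to general measures was carried out in the cited work of Verbitsky.
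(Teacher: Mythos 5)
The paper does not prove this proposition at all; it is stated as a citation to \cite[Theorem 4 and Remark 5]{verbitsky1996}, so I am judging your sketch on its own terms.

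Your treatment of the Carleson endpoint $p=1$ has a genuine error, and it is precisely the step you flag as the ``only substantive obstacle.'' The stopping-time bound $\sum_{Q\in\mathcal{T}(S)}a_Q^q\mu(Q)\lesssim (\angles{\phi}^\mu_S)^q\mu(S)$ is salvageable (using maximality of the stopping children, not just the raw selection rule), but the subsequent claim that \emph{``sparseness converts $\sum_S\angles{\phi}^\mu_S\mu(S)$ into $\int\phi\,\dmu$''} is false. Sparseness gives $\sum_S\angles{\phi}^\mu_S\mu(S)\lesssim\sum_S\angles{\phi}^\mu_S\mu(E(S))$ with $E(S)$ disjoint, but on $E(S)$ one only has $\phi\lesssim\angles{\phi}^\mu_S$, not the reverse, so nothing bounds $\angles{\phi}^\mu_S$ by $\phi$ there. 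Concretely, take $\mu$ Lebesgue on $[0,1]$, a single coefficient $a_{[0,1/N]}=N$ with $N=4^K$, so $\phi=N\,1_{[0,1/N]}$ and $\norm{a}_{f^{1,q}}=\int\phi=1$. Your stopping rule produces the chain $S_k=[0,4^{-k}]$, $k=0,\dots,K$, with $\angles{\phi}^\mu_{S_k}=4^k$ and $\mu(S_k)=4^{-k}$, hence
\begin{equation*}
\sum_{k=0}^K\angles{\phi}^\mu_{S_k}\mu(S_k)=K+1\approx\log N\gg 1=\int\phi\,\dmu,
\end{equation*}
and your chain of inequalities delivers $\sum_Qa_Qb_Q\mu(Q)\lesssim(\log N)\norm{b}_{f^{\infty,q'}}$ rather than the correct $\lesssim\norm{b}_{f^{\infty,q'}}$. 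The loss comes from replacing the exact tree sums $\sum_{\mathcal{T}(S_k)}a_Q^q\mu(Q)$, which vanish for $k<K$, by their crude upper bounds $\angles{\phi}^\mu_{S_k}^q\mu(S_k)$ on every generation. The fix is the one you mention in passing as an alternative, and it is not equivalent to what you wrote: decompose at geometric levels of $\phi$ (or better $\{M^\mu 1_{\{\phi>2^k\}}>1/2\}$, which gives a genuine union of maximal dyadic cubes $\{Q_{k,i}\}$), verify the atom normalization $\sum_{Q\to(k,i)}a_Q^q\mu(Q)\lesssim 2^{kq}\mu(Q_{k,i})$ using that every such $Q$ meets $\{\phi\le 2^{k+1}\}$ on at least half its measure, and then conclude via the layer-cake identity $\sum_{k,i}2^k\mu(Q_{k,i})\lesssim\sum_k 2^k\mu(\{\phi>2^k\})\eqsim\int\phi\,\dmu$. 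This Whitney--Calder\'on--Zygmund route avoids the logarithmic pile-up because the cube measures and the heights trade off against each other in the layer-cake sum, whereas your principal-cube chain has no such cancellation.

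A smaller point: in the range $p,q\in(1,\infty)$ the near-extremizer $b_Q\sim a_Q^{q-1}\phi(x)^{p-q}$ is a function of $x$, not a constant on $Q$, so it is not an admissible family; one must replace it by its $\mu$-average on $Q$ and then invoke the Fefferman--Stein vector-valued maximal inequality (or the equivalent-expressions Lemma 2.11 already in the paper) to show the averaged family still has controlled $f^{p',q'}$ norm. That step is routine but missing, and it silently requires $p'\in(1,\infty)$, $q'\in(1,\infty]$, which is exactly why the $p=1$ case has to be handled separately in the first place.
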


The following factorization  theorem 
was proven by Cohn and Verbitsky \cite[Theorem 2.4]{cohn2000}. 
\begin{proposition}[Factorization of discrete Littlewood--Paley spaces]
\label{proposition:factorization_littlewood}Let $\mu$ be a locally finite Borel measure on $\br^d$. Let $p,p_1,p_2\in(0,\infty]$ and $q,q_1,q_2\in(0,\infty]$ be exponents that satisfy the H\"older relations:
$$
\frac{1}{p}=\frac{1}{p_1}+\frac{1}{p_2}\quad \text{ and } \quad\frac{1}{q}=\frac{1}{q_1}+\frac{1}{q_2}. 
$$
Then the following assertions hold:
\begin{enumerate}[label=(\roman*)]
\item Every $a\in f^{p_1,q_1}$ and $b\in f^{p_2,q_2}$ satisfy the estimate
$$
\norm{ab}_{f^{p,q}}\lesssim_{q,p}\norm{a}_{f^{p_1,q_1}}\norm{b}_{f^{p_2,q_2}}.
$$
\item For each $c\in f^{p,q}$ there exists $a\in f^{p_1,q_1}$ and $b\in f^{p_2,q_2}$ such that $c=ab$ and
$$
\norm{a}_{f^{p_1,q_1}} \, \norm{b}_{f^{p_2,q_2}} \lesssim_{p,q} \norm{c}_{f^{p,q}}.
$$
\end{enumerate}
\end{proposition}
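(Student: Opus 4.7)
The plan is to handle the two assertions separately. Assertion (i) follows from iterating H\"older's inequality. For each $x \in \br^d$, H\"older's inequality in $\ell^q$ with exponents $q_1/q$ and $q_2/q$ (which are conjugate in view of the relation $q/q_1 + q/q_2 = 1$) applied to the product $|a_Q|^q \cdot |b_Q|^q 1_Q$ yields the pointwise estimate
\begin{equation*}
\Big(\sum_Q |a_Q b_Q|^q 1_Q(x)\Big)^{1/q} \le \Big(\sum_Q |a_Q|^{q_1} 1_Q(x)\Big)^{1/q_1} \Big(\sum_Q |b_Q|^{q_2} 1_Q(x)\Big)^{1/q_2}.
\end{equation*}
Raising to the $p$-th power, integrating against $\mu$, and applying H\"older in $L^p(\mu)$ with exponents $p_1/p$ and $p_2/p$ then yields the desired norm inequality for finite $p$ and $q$. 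The endpoint cases ($q_i = \infty$, $p = \infty$ giving the Carleson norm, or $p_i = \infty$) reduce to the same structure after replacing sums by suprema and, in the Carleson case, restricting to subcubes of each testing cube before the H\"older step.

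For assertion (ii), I plan to lift the scalar factorization in $L^p(\mu)$ to the cube-indexed level. Set $F(x) := \big(\sum_Q |c_Q|^q 1_Q(x)\big)^{1/q}$, so that $\norm{c}_{f^{p,q}(\mu)} = \norm{F}_{L^p(\mu)}$ for finite $p,q$. The elementary scalar factorization $F = F^{p/p_1} \cdot F^{p/p_2}$ satisfies $\norm{F^{p/p_1}}_{L^{p_1}(\mu)} \norm{F^{p/p_2}}_{L^{p_2}(\mu)} = \norm{F}_{L^p(\mu)}$. To lift this to a cube-indexed factorization $c_Q = a_Q b_Q$, I would set
\begin{equation*}
a_Q := |c_Q|^{q/q_1}\, \psi_Q \, \operatorname{sgn}(c_Q), \qquad b_Q := |c_Q|^{q/q_2}\, \psi_Q^{-1},
\end{equation*}
so that $a_Q b_Q = c_Q$ (the exponent budget $q/q_1 + q/q_2 = 1$ is respected), with $\psi_Q$ chosen as a weighted $\mu$-average of a suitable power of $F$ over $Q$. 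A natural candidate is $\psi_Q := \big(\angles{F^r}^\mu_Q\big)^s$, with $r$ and $s$ calibrated so that $\sum_Q a_Q^{q_1} 1_Q$ is comparable to $F^{q_1 p/p_1}$ in $L^{p_1/q_1}(\mu)$ (modulo a dyadic maximal function of $F^r$) and analogously for $b$. With the right calibration, the product of the two Littlewood--Paley norms telescopes to $\norm{F}_{L^p(\mu)}^{p/p_1 + p/p_2} = \norm{c}_{f^{p,q}(\mu)}$, as required.

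The main obstacle is the selection of the weight $\psi_Q$: even though $\psi_Q$ is a single scalar per cube, it must encode enough pointwise information about $F$ for both $\ell^{q_i}$-densities to be controlled in the correct $L^{p_i/q_i}$ spaces. The verification that an averaged choice works rests on a Carleson embedding, or equivalently a weighted dyadic Hardy-type inequality, applied to the dyadic averages of $F^r$, allowing pointwise control of $F$ by a maximal function evaluated cubewise. The various endpoint cases are handled by degenerate versions of the same scheme: for $p=\infty$ (Carleson norm) one uses the Carleson characterization and replaces global averages by averages over each testing cube; when $p_i=\infty$ or $q_i=\infty$ one replaces the corresponding $\ell^{q_i}$-sum or $L^{p_i}$-integral by a supremum and adapts $\psi_Q$ accordingly, for instance by taking a supremum of averages in place of an average.
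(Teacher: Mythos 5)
Your part (i) is correct and is essentially the same iterated H\"older argument the paper takes for granted. Your part (ii), however, has a genuine gap at precisely the step you call the ``main obstacle.'' The paper does not prove this assertion from scratch: it cites Cohn and Verbitsky \cite[Theorem 2.4]{cohn2000} for the single factorization $f^{p,q}=f^{p,\infty}\cdot f^{\infty,q}$, and then obtains the general $f^{p,q}=f^{p_1,q_1}\cdot f^{p_2,q_2}$ by first factoring $c=\alpha'\beta'$ with $\alpha'\in f^{p,\infty}$, $\beta'\in f^{\infty,q}$, then further splitting $\alpha'=\alpha_1\alpha_2$ in $f^{p_1,\infty}\cdot f^{p_2,\infty}$ and $\beta'=\beta_1\beta_2$ in $f^{\infty,q_1}\cdot f^{\infty,q_2}$ (both elementary), and finally setting $a:=\alpha_1\beta_1$, $b:=\alpha_2\beta_2$ and applying part (i). You try instead to produce the factorization directly by a cubewise average $\psi_Q:=(\angles{F^r}^\mu_Q)^s$, which would amount to a new proof of the Cohn--Verbitsky theorem, but you do not carry out the verification, and the candidate you suggest is unlikely to work as stated: the hard half of the factorization is that the quotient family $b_Q:=c_Q/a_Q$ lands in the Carleson space $f^{\infty,q}(\mu)$, i.e.\ $\sup_P\frac{1}{\mu(P)}\sum_{Q\subseteq P}|c_Q|^q a_Q^{-q}\mu(Q)<\infty$, and for a simple local average this is a two-weight Carleson embedding that does not follow from a maximal-function bound. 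The proof in \cite{cohn2000} (and in the tent-space literature it generalizes) builds $a_Q$ via a stopping-time / principal-cubes construction, so that $a_Q$ is constant on the stopping tree and the Carleson sum telescopes; a straight average of $F$ over $Q$ does not have this property. Until that Carleson estimate is actually proven for your choice of $\psi_Q$ (or a stopping-time construction is substituted), part (ii) is not established. You also miss the reduction trick: handling general $(p_1,q_1,p_2,q_2)$ simultaneously is harder than the special case $f^{p,\infty}\cdot f^{\infty,q}$, and the paper's two-step reduction sidesteps exactly the calibration difficulties you run into.
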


The factorization $f^{p,q}=f^{p_1,q_1}f^{p_2,q_2}$ is actually deduced by combining the factorization $f^{p,q}=f^{p,\infty} f^{\infty,q}$ (which is \cite[Theorem 2.4]{cohn2000}) with the trivial factorizations $f^{\infty,q}=f^{\infty,q_1}f^{\infty,q_2}$ and $f^{p,\infty}=f^{p_1,\infty}f^{p_2,\infty}$.

\subsection{Equivalent discrete expressions}

\begin{lemma}[Summation by parts]\label{lemma_summationbyparts}For every $p\in(0,\infty)$, we have
$$
\big( \sum_Q a_Q 1_Q\big)^{p}\eqsim_p \sum_Q a_Q \big(\sum_{R\subseteq Q} a_R 1_R \big)^{p-1}\eqsim_p \sum_Q a_Q 1_Q \big(\sum_{R\supseteq Q} a_R 1_R \big)^{p-1} .
$$
\end{lemma}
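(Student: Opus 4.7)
The plan is to verify both equivalences pointwise in $x\in\br^d$. For each fixed $x$, the dyadic cubes containing $x$ form a totally ordered chain under inclusion, which I enumerate as $Q_1(x)\supsetneq Q_2(x)\supsetneq\cdots$. Setting $b_j:=a_{Q_j(x)}$ and introducing the ``ancestor'' and ``descendant'' partial sums
\[
T_j:=\sum_{k\le j} b_k=\sum_{R\supseteq Q_j,\, R\ni x} a_R,\qquad S_j:=\sum_{k\ge j} b_k=\sum_{R\subseteq Q_j,\, R\ni x} a_R,
\]
all cubes $Q\not\ni x$ contribute zero to every term in each of the three expressions (using the standing convention $0/0:=0$ to interpret the factor $a_Q$ together with the vanishing inner sum). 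Since $T_\infty=S_1=\sum_{Q\ni x} a_Q$, the three expressions evaluated at $x$ reduce respectively to $S_1^p$, $\sum_j b_j\, S_j^{p-1}$, and $\sum_j b_j\, T_j^{p-1}$. Thus it suffices to establish the scalar claim that, for every non-negative sequence $(b_j)$ and every $p>0$,
\[
S_1^p\;\eqsim_p\;\sum_j b_j\, T_j^{p-1}\;\eqsim_p\;\sum_j b_j\, S_j^{p-1}.
\]

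The first equivalence is obtained by Abel summation / telescoping against the identity $T_j^p-T_{j-1}^p=\int_{T_{j-1}}^{T_j} p\, t^{p-1}\,dt$ (with $T_0:=0$), where the monotonicity of the integrand $t\mapsto t^{p-1}$ fixes the direction of the inequality. For $p\ge 1$ the integrand is non-decreasing, giving $T_j^p-T_{j-1}^p\le p(T_j-T_{j-1})T_j^{p-1}$; summing telescopes to
\[
\tfrac{1}{p}\,S_1^p=\tfrac{1}{p}\,T_\infty^p\;\le\;\sum_j b_j\, T_j^{p-1},
\]
while the reverse bound $\sum_j b_j\, T_j^{p-1}\le S_1^{p-1}\sum_j b_j=S_1^p$ is immediate from $T_j\le S_1$. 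For $0<p<1$ the integrand is decreasing, and exactly the same comparison runs in the opposite direction: telescoping yields the upper bound $\sum_j b_j\, T_j^{p-1}\le \tfrac{1}{p}S_1^p$, while the trivial estimate $T_j\le S_1$ now provides the matching lower bound $\sum_j b_j\, T_j^{p-1}\ge S_1^p$. The crucial point is that this choice evaluates $t^{p-1}$ only at right endpoints $T_j$, so the undefined quantity $T_0^{p-1}=0^{p-1}$ never appears when $0<p<1$.

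The second equivalence, involving $S_j$, is proved by the mirror-image Abel summation using $S_j-S_{j+1}=b_j$ and the telescoping identity $\sum_j (S_j^p-S_{j+1}^p)=S_1^p$; the same splitting of cases $p\ge 1$ versus $0<p<1$ yields the two-sided estimate. The only technical nuisance --- not a real obstacle --- is that the chain of dyadic cubes containing $x$ is a priori bi-infinite, so the telescoping sums must be interpreted as limits of finite truncations, and one must orient the monotonicity comparison (as described above) to sidestep evaluating $0^{p-1}$ in the sub-case $0<p<1$.
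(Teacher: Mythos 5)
Your proof is correct and follows the same route as the paper's (one-line) proof: reduce pointwise to the linearly ordered chain of dyadic cubes containing $x$, then compare $t\mapsto t^p$ via the mean value theorem / integral identity and sum by parts (Abel summation). The extra care you take with the $0<p<1$ case — orienting the monotonicity comparison so $t^{p-1}$ is evaluated only at nonzero endpoints — and with the bi-infinite nature of the chain is a welcome elaboration of what the paper leaves implicit.
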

\begin{proof}At each point, the summation is linearly ordered by the nestedness of dyadic cubes. Using the mean value theorem (applied to the function $t\mapsto t^p$) and summation by parts yields the estimates.
\end{proof}
\begin{remark}Applying the lemma to the summation inside the integration, we deduce, 
for every $p\in(0,\infty)$, 
\begin{equation}\label{equation:summation_by_parts_inside_integral}
\int \big( \sum_Q a_Q 1_Q\big)^{p} \dmu \eqsim_p \sum_Q a_Q \mu(Q) \big(\sum_{R\supseteq Q} a_R \big)^{p-1}\eqsim_p \sum_Q a_Q \int_Q \big(\sum_{R\subseteq Q} a_R 1_R \big)^{p-1} \dmu.
\end{equation}
\end{remark}

The following lemma was proven in \cite[Proposition 2.2]{cascante2004}:
\begin{lemma}[Equivalent discrete expressions]\label{lemma:equivalent_expressions}Let $p\in(1,\infty)$. Then the following expressions are comparable:
\begin{equation}
\begin{split}
&\int \big( \sum_Q a_Q 1_Q \big)^p \dmu\\
&\eqsim_p \sum_Q a_Q \mu(Q) \big(\frac{1}{\mu(Q)} \sum_{R\subseteq Q} a_R \mu(R) \big)^{p-1}\\
&\eqsim_p \int \big( \sup_{Q} \frac{1_Q}{\mu(Q)} \sum_{R\subseteq Q} a_R \mu(R) \big)^p \dmu.
\end{split}
\end{equation}
\end{lemma}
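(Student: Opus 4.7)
The lemma asserts that three expressions, which I label $(A)$, $(B)$, $(C)$ in the order of appearance, are pairwise comparable up to constants depending only on $p$. The plan is to establish the chain $(A)\eqsim_p(B)\eqsim_p(C)$, using Lemma~\ref{lemma_summationbyparts} (summation by parts) together with the dyadic Hardy--Littlewood maximal operator as the main tools.

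First I reformulate: by Lemma~\ref{lemma_summationbyparts} in the form \eqref{equation:summation_by_parts_inside_integral}, $(A) = \int(\sum_Q a_Q 1_Q)^p \dmu \eqsim_p \sum_Q a_Q \int_Q g_Q^{p-1} \dmu$ where $g_Q:=\sum_{R\subseteq Q} a_R 1_R$. Set also $g:=\sum_Q a_Q 1_Q$, $A_Q := \mu(Q)^{-1}\sum_{R\subseteq Q} a_R\mu(R)$, and $h(x) := \sup_{Q\ni x} A_Q$, so that $(C)=\int h^p \dmu$.

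The bridge between $(B)$, $(C)$, and $(A)$ comes from two pointwise estimates. On the one hand, $A_Q = \mu(Q)^{-1}\int_Q g_Q \dmu \le \angles{g}^\mu_Q$ gives $h\le M^\mu g$, so by $L^p$-boundedness of the dyadic maximal operator (valid for $p>1$), $(C) \lesssim_p \int g^p \dmu = (A)$. On the other hand, $A_Q 1_Q\le h$ pointwise, and H\"older's inequality yield $(B) = \int \sum_Q a_Q A_Q^{p-1} 1_Q\, \dmu \le \int g\, h^{p-1} \dmu \le (A)^{1/p} (C)^{(p-1)/p}$.

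The remaining and crucial step is $(A)\eqsim_p (B)$. One direction follows from Jensen's inequality applied cube-by-cube to the reformulation above: for $p \ge 2$, $\int_Q g_Q^{p-1} \dmu \ge \mu(Q)A_Q^{p-1}$, giving $(A)\gtrsim_p (B)$, and for $1<p<2$ the reverse Jensen gives $(A)\lesssim_p (B)$. To prove the opposite inequality in each case I would iterate Lemma~\ref{lemma_summationbyparts} within $\int_Q g_Q^{p-1}\dmu$ along the downward dyadic chain, interchange sums via Fubini, and compare with the upward expansion of $A_Q^{p-1}$; the two sides then match up to constants depending only on $p$. This aggregate combinatorial matching is the main obstacle, since the cube-by-cube ratios need not be uniformly bounded and only the aggregate equality holds. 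Once $(A) \eqsim_p (B)$ is established, the H\"older bound above rearranges to $(A)\lesssim_p (C)$, which combined with $(C)\lesssim_p (A)$ closes the chain.
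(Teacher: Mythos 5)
Your reformulation and the two soft bounds are correct: $(C)\lesssim_p(A)$ via $h\le M^\mu g$ and the dyadic maximal theorem, and $(B)\le(A)^{1/p}(C)^{(p-1)/p}$ via $A_Q 1_Q\le h$ and H\"older. Combined with the Jensen step, this actually \emph{closes the argument for $1<p\le 2$}: there $t\mapsto t^{p-1}$ is concave, so $\int_Q g_Q^{p-1}\dmu\le\mu(Q)A_Q^{p-1}$ gives $(A)\lesssim_p(B)$, and chaining $(A)\lesssim(B)\le(A)^{1/p}(C)^{(p-1)/p}\lesssim(A)$ gives all comparabilities (after a routine truncation to finitely many cubes to justify dividing by $(A)^{1/p}$). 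You should say this explicitly rather than treating it as a symmetric situation.

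For $p>2$, however, the crucial inequality $(A)\lesssim_p(B)$ (equivalently $(A)\lesssim_p(C)$) is genuinely missing, and the sketch you give will not produce it. Jensen now goes the wrong way, yielding only $(B)\lesssim(A)$. The proposed remedy --- iterate Lemma~\ref{lemma_summationbyparts} inside $\int_Q g_Q^{p-1}\dmu$, use Fubini, and compare term by term with the expansion of $A_Q^{p-1}$ --- cannot work: for nested $R,R'\subseteq Q$ one has $\mu(R\cap R')\ge\mu(R)\mu(R')/\mu(Q)$, so the term-by-term comparison of the expanded sums again goes in the direction $(A)\gtrsim(B)$. The obstruction is real, since a cube-by-cube comparison is false (take a single coefficient $a_R$ on a small $R\subsetneq Q$: then $\int_Q g_Q^{p-1}\dmu=a_R^{p-1}\mu(R)$ whereas $\mu(Q)A_Q^{p-1}=a_R^{p-1}\mu(R)^{p-1}\mu(Q)^{2-p}$, and for $p>2$ the latter is far smaller when $\mu(R)\ll\mu(Q)$). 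What is actually needed is a stopping-time or Carleson-type aggregate argument; for instance, one proves $(A)\lesssim_p(C)$ by the factorization $f^{p,1}(\mu)=f^{p,\infty}(\mu)\cdot f^{\infty,1}(\mu)$ of Proposition~\ref{proposition:factorization_littlewood} with the explicit splitting $a_Q=c_Qd_Q$, $c_Q:=\sup_{R\supseteq Q}A_R$ and $d_Q:=a_Q/c_Q$, observing that $\sup_R c_R1_R=h$ so $\|c\|_{f^{p,\infty}(\mu)}^p=(C)$, while the estimate
\begin{equation*}
\sum_{R\subseteq Q}\frac{a_R\mu(R)}{c_R}\le\frac{1}{A_Q}\sum_{R\subseteq Q}a_R\mu(R)=\mu(Q)
\end{equation*}
(the same one-cube trick the paper uses to verify condition \eqref{condition__ea} in the proof of Proposition~\ref{proposition_equivalenceofauxiliaryfamilies}) gives $\|d\|_{f^{\infty,1}(\mu)}\le 1$. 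No iteration of summation by parts supplies a substitute for such an argument, and presenting it as merely a ``combinatorial matching'' to be filled in overstates how close the current plan is to a proof.
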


\begin{remark}From the lemma together with H\"older's inequality it follows that we also have
\begin{equation}
\label{eq_equivalent_norm_sup}
\int \big( \sum a_Q 1_Q \big)^p \dmu \eqsim_p \sum_Q a_Q \mu(Q) \big( \sup_{R\supseteq Q} \frac{1}{\mu(R)} \sum_{S\subseteq R} a_S \mu(S) \big)^{p-1}.
\end{equation}
\end{remark}

\subsection{Straightforward sufficient, necessary, and equivalency conditions}\label{sec_straightforward_conditions} We collect
straightforward necessary, sufficient, or equivalent  conditions in the following lemma. Assertions \ref{item_1} and \ref{item_3} were observed by Cascante and Ortega \cite[Proof of Theorem 1.1]{cascante2017}.
\begin{lemma}[Sufficient, necessary, or equivalent conditions]\label{lemma_immediateconditions}The following assertions hold:
\begin{enumerate}[label=(\roman*)]
\item \label{item_1}For $p\in(1,\infty)$ and $q\in(0,\infty)$, the norm estimate \eqref{eq_norminequality} is equivalent to the estimate
\begin{equation}
\label{eq_norm_inequality_sup}
\norm{\sum_Q \lambda_Q a_Q 1_Q}_{L^q(\omega)}\lesssim \norm{\sup_Q a_Q 1_Q}_{L^{p}(\sigma)}\quad \text{for every $\{a_Q\}$}.
\end{equation}

\item \label{item_2} \label{item_sup_equiv_sum } For every $p,q\in(0,\infty)$, estimate \eqref{eq_norm_inequality_sup} is equivalent to the estimate
\begin{equation}\label{eq_norm_inequality_sum}
\norm{\sum_Q \rho_Q b_Q 1_Q}_{L^q(\omega)}\lesssim \norm{\sum_Q b_Q 1_Q}_{L^{p}(\sigma)}\quad \text{for every $\{b_Q\}$},
\end{equation}
where $\rho_Q$ is defined as the localized sum $\rho_Q:=\sum_{R\subseteq Q} \lambda_R 1_R$.
\item \label{item_3}In terms of the Littlewood--Paley norms, estimate \eqref{eq_norm_inequality_sup} can be viewed as the $f^{\frac{p}{q},\infty}(\sigma)\to f^{1,\frac{1}{q}}(\omega)$ norm estimate for the multiplier operator $\{a_Q\}\mapsto \{\lambda_Q^q a_Q\}$, which is the estimate
\begin{equation}
\norm{\{\lambda_Q^q a_Q\}}_{f^{1,\frac{1}{q}}(\omega)}\lesssim \norm{a}_{f^{\frac{p}{q},\infty}(\sigma)}.
\end{equation}
By duality, it is equivalent to the $f^{\infty,\frac{1}{1-q}}(\omega)\to f^{\frac{p}{p-q},1}(\sigma) $ norm estimate  for the adjoint multiplier operator $\{b_Q\}\mapsto \{\lambda_Q^q \frac{\omega(Q)}{\sigma(Q)} b_Q\}$, i.e., the estimate
\begin{equation}\label{eq_norminequality_dual}
\norm{\{\lambda_Q^q \frac{\omega(Q)}{\sigma(Q)} b_Q\}}_{f^{\frac{p}{p-q},1}(\sigma)}\lesssim \norm{b}_{f^{\infty,\frac{1}{1-q}}(\omega)}.
\end{equation}
\item \label{item_4} The condition 
\begin{equation}
\label{eq_sufficient_dualeasy}
\int \big( \sum_Q \lambda_Q^q \frac{\omega(Q)}{\sigma(Q)} 1_Q \big)^{\frac{p}{p-q}} \dsigma< \infty
\end{equation}
is sufficient for \eqref{eq_norminequality_dual}.
\item \label{item_5}
For every $\gamma \in(0,q)$, the estimate
\begin{equation}
\label{eq_necessary_stein}
\norm{\sum_Q \Lambda_{\gamma,Q} a_Q 1_Q}_{L^q(\omega)}\lesssim_{\gamma,q} \norm{\sum_Q a_Q 1_Q}_{L^{p}(\sigma)}
\end{equation}
is necessary for \eqref{eq_norm_inequality_sum}.
\end{enumerate}
\end{lemma}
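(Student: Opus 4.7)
The plan is to establish assertions (i)--(v) in order; (i)--(iv) follow from direct substitutions, maximal function bounds, and duality, while (v) requires the most care.

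For (i), substituting $f := \sup_Q a_Q 1_Q$ into \eqref{eq_norminequality} gives one direction, since the pointwise bound $f \geq a_Q 1_Q$ yields $\angles{f}^\sigma_Q \geq a_Q$ and hence $T_\lambda(f\sigma) \geq \sum_Q \lambda_Q a_Q 1_Q$. The converse substitutes $a_Q := \angles{f}^\sigma_Q$ and notes that $\sup_Q a_Q 1_Q = M^\sigma f$ is bounded on $L^p(\sigma)$ precisely when $p > 1$, which explains the restriction on $p$ in the statement. For (ii), the substitution $a_Q := \sum_{R \supseteq Q} b_R$ identifies $\sum_Q \lambda_Q a_Q 1_Q = \sum_R \rho_R b_R 1_R$ and $\sup_Q a_Q 1_Q = \sum_R b_R 1_R$ via an interchange of summation along the totally ordered chain of dyadic cubes containing each point. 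For the converse direction, take $b_Q := (a_Q - \sup_{R \supsetneq Q} a_R)_+$; a telescoping argument along the same chain shows $\sum_{R \supseteq Q} b_R \geq a_Q$ and $\sum_Q b_Q 1_Q = \sup_Q a_Q 1_Q$ pointwise.

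For (iii), after the rescaling $a_Q \mapsto a_Q^{1/q}$, the two sides of \eqref{eq_norm_inequality_sup} rewrite exactly as $\|\{\lambda_Q^q a_Q\}\|_{f^{1, 1/q}(\omega)}^{1/q}$ and $\|\{a_Q\}\|_{f^{p/q, \infty}(\sigma)}^{1/q}$, identifying the estimate as the claimed multiplier bound. The dual statement \eqref{eq_norminequality_dual} then follows from Proposition \ref{proposition_normduality}, which provides $(f^{1, 1/q}(\omega))^* = f^{\infty, 1/(1-q)}(\omega)$ and $(f^{p/q, \infty}(\sigma))^* = f^{p/(p-q), 1}(\sigma)$; computing the adjoint of the multiplier $\{c_Q\} \mapsto \{\lambda_Q^q c_Q\}$ against the pairings $\sum_Q c_Q d_Q \mu(Q)$ produces the operator $\{d_Q\} \mapsto \{\lambda_Q^q \omega(Q)/\sigma(Q)\, d_Q\}$. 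For (iv), note that $\|\{b_Q\}\|_{f^{\infty, 1/(1-q)}(\omega)} \geq b_Q$ for each individual $Q$ (by retaining only the single summand $R = Q$ in the defining supremum), so pointwise
\[
 \sum_Q \lambda_Q^q \tfrac{\omega(Q)}{\sigma(Q)} b_Q 1_Q \leq \|\{b_Q\}\|_{f^{\infty, 1/(1-q)}(\omega)} \sum_Q \lambda_Q^q \tfrac{\omega(Q)}{\sigma(Q)} 1_Q,
\]
and taking $L^{p/(p-q)}(\sigma)$-norms yields \eqref{eq_norminequality_dual} under assumption \eqref{eq_sufficient_dualeasy}.

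Assertion (v) is the main obstacle. The first step is to test \eqref{eq_norm_inequality_sum} with the particular choice $b_Q := a_Q$, which yields $\|\sum_Q \rho_Q a_Q 1_Q\|_{L^q(\omega)} \lesssim \|\sum_Q a_Q 1_Q\|_{L^p(\sigma)}$; thus it suffices to prove the purely $\omega$-sided estimate
\[
\|\sum_Q \Lambda_{\gamma,Q} a_Q 1_Q\|_{L^q(\omega)} \lesssim_{\gamma, q} \|\sum_Q \rho_Q a_Q 1_Q\|_{L^q(\omega)}
\quad \text{for each } \gamma \in (0, q).
\]
The difficulty is to relate the constant $\omega$-$\gamma$-average $\Lambda_{\gamma,Q}$ on the left to the pointwise-varying function $\rho_Q$ on the right. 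The plan is to exploit the strict inequality $\gamma < q$ through a Kolmogorov-type extrapolation combined with an $L^{q/\gamma}$-duality argument: the condition $\gamma < q$ furnishes the local embedding $L^q(\omega|_Q) \hookrightarrow L^\gamma(\omega|_Q)$ with loss factor $\omega(Q)^{1/\gamma - 1/q}$, which paired with duality between $L^{q/\gamma}$ and its conjugate space allows the averaged constant $\Lambda_{\gamma,Q}$ to be extracted from the $L^q$-norm of $\rho_Q$. The threshold $\gamma = q$ is sharp precisely because the exponent pair $(q/\gamma, (q/\gamma)')$ degenerates there, and this is why the necessity breaks down for $\gamma \geq q$, as will be illustrated by the counterexamples of Theorem~\ref{theorem_sufficiency_and_necessity_wolff}.
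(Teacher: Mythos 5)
Assertions (i)--(iv) are handled correctly and in essentially the same way as the paper: substituting $f=\sup_Q a_Q 1_Q$ and $a_Q=\angles{f}^\sigma_Q$ together with the dyadic maximal inequality (hence the restriction $p>1$), rescaling $a_Q\mapsto a_Q^{1/q}$ to identify the Littlewood--Paley norms and then dualizing via Proposition~\ref{proposition_normduality}, and retaining a single summand in the $f^{\infty,\frac{1}{1-q}}(\omega)$-norm. For (ii), your choices $a_Q:=\sum_{R\supseteq Q}b_R$ and $b_Q:=(a_Q-\sup_{R\supsetneq Q}a_R)_+$ are fine; the second is a cleaner telescoping device than the formula written in the paper (observe that $b_Q=N_Q-N_{\widehat{Q}}$ with $N_Q:=\sup_{R\supseteq Q}a_R$, so $\sum_{R\supseteq Q}b_R=N_Q\geq a_Q$ and $\sum_Q b_Q 1_Q=\sup_Q a_Q 1_Q$ pointwise), so this is a legitimate minor variant.

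The genuine gap is in assertion (v). Your reduction (test \eqref{eq_norm_inequality_sum} with $b_Q:=a_Q$, so that it suffices to prove $\|\sum_Q\Lambda_{\gamma,Q}a_Q 1_Q\|_{L^q(\omega)}\lesssim_{\gamma,q}\|\sum_Q\rho_Q a_Q 1_Q\|_{L^q(\omega)}$) is the correct first step and matches the paper. But what follows is only a sketch of a \emph{plan}, not a proof: the phrases ``Kolmogorov-type extrapolation'' and ``$L^{q/\gamma}$-duality argument'' are never cashed out into an actual estimate, and the ``local embedding $L^q(\omega|_Q)\hookrightarrow L^\gamma(\omega|_Q)$'' is just H\"older's inequality, which alone does not produce the stated bound because it is a single-cube statement, whereas you must control a \emph{sum} over the nested family of dyadic cubes. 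The mechanism the paper actually invokes is Stein's inequality for martingale conditional expectations: writing $\|\sum_Q\rho_Q a_Q 1_Q\|_{L^q(\omega)}=\|(\sum_Q\rho_Q a_Q 1_Q)^\gamma\|_{L^{q/\gamma}(\omega)}^{1/\gamma}$, one applies the martingale/conditional-expectation inequality (dual to Doob's maximal inequality; see Stein's ``Topics in Harmonic Analysis'') at the exponent $q/\gamma$, which is admissible precisely because $\gamma<q$ gives $q/\gamma>1$; the conditional expectations $\langle\,\cdot\,\rangle^\omega_Q 1_Q$ then turn $(\rho_Q a_Q)^\gamma 1_Q$ into $\Lambda_{\gamma,Q}^\gamma a_Q^\gamma 1_Q$. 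Without identifying and applying this (or an equivalent) inequality, the central step of (v) is missing. You correctly spotted that $\gamma<q$ is the crucial hypothesis, but you have not produced the estimate that uses it.
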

\begin{proof}
{\it\ref{item_1}.} One direction follows from substituting $f:=\sup_Q a_Q 1_Q$,  and the other from substituting $a_Q:=\angles{f}^\sigma_Q$ and using the dyadic Hardy--Littlewood maximal inequality.  

{\it \ref{item_2}.} First, we observe that for each $b$ there exists $a$ (which depends on $b$) that satisfies the relations
\begin{equation}\label{eq:domination1}
\sum_Q \rho_Q b_Q 1_Q = \sum_Q \lambda_Q a_Q 1_Q \quad\text{ and } \quad \sup_Q a_Q 1_Q = \sum_Q b_Q 1_Q;
\end{equation}
indeed, the choice $a_Q:=\sum_{R\supseteq Q} b_Q$ works. Next, we observe that for each $a$ there exists $b$ (which depends on $a$) that satisfies the relations

\begin{equation}\label{eq:domination2}
\sum_Q \lambda_Q a_Q 1_Q \leq \sum_Q \rho_Q b_Q 1_Q \quad\text{ and } \quad \sum_Q b_Q 1_Q=\sup_Q a_Q 1_Q;
\end{equation}
indeed, the choice $$b_Q:= \sum_{R\supseteq Q} \Big(\big(\sup_{S\supseteq R} a_S\big) -  \big(\sup_{S\supseteq \hat{R}}a_S\big)\Big),$$ where $\hat{Q}$ denotes the dyadic parent of the dyadic cube $Q$, works by telescoping summation. Estimate \eqref{eq_norm_inequality_sup} implies estimate \eqref{eq_norm_inequality_sum} through the relations \eqref{eq:domination1}, and conversely, estimate \eqref{eq_norm_inequality_sum} implies estimate \eqref{eq_norm_inequality_sup} through the relations \eqref{eq:domination2}.

{\it \ref{item_3}.} This assertion follows by writing estimate \eqref{eq_norm_inequality_sup} in terms of the discrete Littlewood--Paley spaces and using duality (Proposition \ref{proposition_normduality}).

{\it \ref{item_4}.} The sufficiency of condition \eqref{eq_sufficient_dualeasy} follows from the dual estimate \eqref{eq_norminequality_dual} together with the  trivial estimate $\sup_Q b_Q\leq \norm{b}_{f^{\infty,\frac{1}{1-q}}(\omega)}$.

{\it \ref{item_5}.}
The necessity of condition \eqref{eq_necessary_stein} follows from estimating the left-hand side of inequality \eqref{eq_norm_inequality_sum} from below by using the scaling of the $L^p$ norms and Stein's inequality:
\begin{equation*}
\begin{split}
&\norm{\sum_Q \big( \sum_{Q\subseteq R} \lambda_R 1_R \big) a_Q 1_Q}_{L^q(\omega)}=\norm{\Big(\sum_Q \Big( \big(( \sum_{Q\subseteq R} \lambda_R 1_R ) a_Q\big)^{\gamma}\Big)^{ \frac{1}{\gamma}} 1_Q \Big)^{\gamma}}_{L^{\frac{q}{\gamma}(\omega)}}^{\frac{1}{\gamma}} \\
& \gtrsim_{\gamma,q}  \norm{\sum_Q \Big(\frac{1}{\omega(Q)}\int \big(\sum_{R\subseteq Q} \lambda_Q 1_Q \big)^\gamma\domega \Big)^{\frac{1}{\gamma}} a_Q 1_Q}_{L^q(\omega)}=: \norm{\sum_Q \Lambda_{\gamma,Q} a_Q 1_Q}_{L^q(\omega)}.
\end{split}
\end{equation*}

\end{proof}

\section{Characterization by factorization}\label{sec_factorization}
\subsection{Factorization condition}In this section, we prove Theorem \ref{theorem_factorization}.

First, we apply Maurey's factorization (Theorem \ref{theorem:maurey}). By applying it to the positive linear operator $T(\cdotroomy \sigma)$ from the Banach lattice $L^p(\sigma)$ into the Lebesgue space $L^q(\omega)$, we see that the two-weight  norm inequality \eqref{eq_norminequality} is equivalent to the existence of a Borel measurable function $\Phi\geq 0$ such that
\begin{subequations}
\begin{align}
\nonumber&\int \Phi \domega \leq 1 \\ 
\label{eq_fact_secondcondition}&\int \big(\sum_{Q\in \cd} \lambda_Q \angles{f}^\sigma_Q 1_Q \big) \Phi^{-\frac{1-q}{q}} \domega \leq C \norm{f}_{L^p(\sigma)}. 
\end{align}
\end{subequations}
Furthermore, we have $\{\Phi=0\}\subseteq \{T(f\sigma)=0\}$ for every $f\in L^p(\sigma)$, which 
means 
\begin{equation}\label{eq_fact_zerodivision}
\text{if $\lambda_Q>0$ and $\omega(Q)>0$, then $\Phi>0$ $d\omega$-a.e. on $Q$.} 
\end{equation}
This condition guarantees that no division by zero occurs, as we may assume that all the cubes $Q$ with $\lambda_Q=0$ or $\omega(Q)=0$ (or $\sigma(Q)=0$) are omitted from the summation because such cubes do not contribute to inequality \eqref{eq_norminequality}. From now on we restrict the indexation to be over the collection $\cq$ of the remaining cubes
$$
\cq:=\{Q\in \cd : \, \lambda_Q>0, \sigma(Q)>0, \text{ and }  \omega(Q)>0\}.
$$

By interchanging  the order of integration and summation in \eqref{eq_fact_secondcondition} and  using the $L^p(\sigma)-L^{p'}(\sigma)$ duality,  we see that \eqref{eq_fact_secondcondition} is  
equivalent to 
$$
\big( \int \big(\sum_{Q\in \cq} \lambda_Q \angles{\Phi^{-\frac{1-q}{q}}}^\omega_Q \frac{\omega(Q)}{\sigma(Q)}1_Q \big)^{p'}  \dsigma \big)^{\frac{1}{p'}}\leq C.
$$
 By  \eqref{eq_fact_zerodivision} together with the remark following it, the average $\angles{\Phi^{-\frac{1-q}{q}}}^\omega_Q$ is positive for every $Q\in \cq$.

Next, we discretize. We prove that the following assertions are equivalent:
\begin{enumerate}[label=(\roman*)]
\item \label{assertion_fact_continuous} There exists a function $\Phi$, with $\Phi>0$ $d\omega$-a.e.  on every cube $Q\in \cq$, that satisfies the pair of conditions
\begin{subequations}
\begin{align}
\label{eq_fact_continuous1}&\int \Phi \domega \lesssim_q 1 \\ 
\label{eq_fact_continuous2}&\big( \int \big(\sum_{Q\in \cq} \lambda_Q \angles{\Phi^{-\frac{1-q}{q}}}^\omega_Q \frac{\omega(Q)}{\sigma(Q)}1_Q \big)^{p'}  \dsigma \big)^{\frac{1}{p'}}\lesssim_q C.
\end{align}
\end{subequations}

\item \label{assertion_fact_discrete} There exists a family $\{a_Q\}_{Q\in \cq}$ of positive reals that satisfies the pair of conditions

\begin{subequations}\label{eq_fact_discrete}
\begin{align}
\label{eq_fact_discrete1}&\int \big(\sup_{Q\in \cq} {a_Q 1_Q}\big)^{\frac{q}{1-q}} \domega \lesssim_1 1 \\ 
\label{eq_fact_discrete2}&\big( \int \big(\sum_{Q\in \cq} \lambda_Q a_Q^{-1} \frac{\omega(Q)}{\sigma(Q)}1_Q \big)^{p'}  \dsigma \big)^{\frac{1}{p'}}\lesssim_q C.
\end{align}
\end{subequations}
\end{enumerate}

First, we prove that the continuous conditions imply the discrete ones. We set $$a_Q^{-1}:= \angles{\Phi^{-\frac{1-q}{q}}}^\omega_Q$$ for every cube $Q\in\cq$. Thus,  condition \eqref{eq_fact_discrete2} becomes condition \eqref{eq_fact_continuous2} . By Jensen's inequality together with the convexity of the function $t\mapsto t^{-q}$, and the Hardy--Littlewood maximal inequality, condition  \eqref{eq_fact_continuous1} implies condition \eqref{eq_fact_discrete1} through
\begin{equation*}
\begin{split}
&\int \big(\sup_{Q\in \cq} {a_Q 1_Q}\big)^{\frac{q}{1-q}} \domega= \int \sup_{Q\in \cq} \big((\angles{\Phi^{-\frac{1-q}{q}}}^\omega_Q)^{-q}\big)^{\frac{1}{1-q}} \domega\\
&\leq \int \big(\sup_{Q\in \cq} \angles{\Phi^{1-q}}^\omega_Q 1_Q )^{\frac{1}{1-q}} \domega \lesssim_q \int \Phi \domega.
\end{split}
\end{equation*}

Next, we prove that the discrete conditions imply the continuous ones. We set $$\Phi:=\big(\sup_{Q\in \cq} {a_Q 1_Q}\big)^{\frac{q}{1-q}}.$$ Thus, condition \eqref{eq_fact_continuous1} becomes  condition \eqref{eq_fact_discrete1} . By estimating the supremum from below by omitting all but one cube from the indexation, we see that condition \eqref{eq_fact_discrete2} implies  condition \eqref{eq_fact_continuous2}. The proof is complete.

\subsection{Related equivalent conditions}For a family $a:=\{a_Q\}$ of positive reals, we write $a^{-1}:=\{a_Q^{-1}\}$. All the indexations throughout this section are restricted to the subcollection
$$
\cq:=\{Q\in \cd : \lambda_Q>0, \sigma(Q)>0, \text{ and }  \omega(Q)>0\}
$$
of dyadic cubes, and hence no division by zero occurs.  We abbreviate the indexation  \quotes{$Q\in \cq$} as \quotes{$Q$}.

For every family $a:=\{a_Q\}$ of positive reals, we define the quantities $A_1(a^{-1})$ and $A_2(a)$, and  conditions \eqref{conditions_a} by
\begin{subequations}\label{conditions_a}
\begin{align}
\label{cond_a1} A_1(a^{-1})&:=\Big(\int \big( \sum_Q \lambda_Q \frac{\omega(Q)}{\sigma(Q)} a_Q^{-1} 1_Q \big)^{p'} \dsigma\Big)^{\frac{1}{p'}}  < \infty \\
 \label{cond_a2}A_2(a)&:= \Big( \int \big( \sup_Q a_Q 1_Q \big)^{\frac{q}{1-q}} \domega\Big)^{\frac{1-q}{q}}< \infty.
\end{align}
\end{subequations}
For every family  $d:=\{d_Q\}$ of positive reals, we define the quantities $D_1(d^{-1})$ and $D_2(d)$, and  conditions \eqref{conditions_d} by
\begin{subequations}\label{conditions_d}
\begin{align}
 \label{cond_d1}  D_1(d^{-1})&:= \sup_Q \frac{1}{\sigma(Q)} \sum_{R\subseteq Q} \lambda_Q d_Q^{-1} \omega(Q)  <\infty \\
\label{cond_d2} D_2(d) &:=  \Big(\int \big( \sum_Q \lambda_Q d_Q^{p'-1} 1_Q \big)^{\frac{(p-1)q}{p-q}}  \domega\Big)^{\frac{p-q}{q}}<\infty.
\end{align}
\end{subequations}

In this section, we prove the following proposition:
\begin{proposition}\label{proposition_equivalenceofauxiliaryfamilies}The following assertions hold:

\begin{enumerate}[label=(\roman*)]
\item\label{item_existence_a} For each family $a:=\{a_Q\}$ of positive reals that satisfies  conditions  \eqref{conditions_a}, there exists a family $d:=\{d_Q\}$ of positive reals that satisfies conditions \eqref{conditions_d}; in fact, such a family is given by
$$
d_Q:= a_Q \sup_{R\supseteq Q} \big( \frac{1}{\sigma(R)} \sum_{S\subseteq R} \lambda_S a_S^{-1} \omega(S) \big),
$$
and satisfies the estimates:
\begin{subequations}
\begin{align}
D_1(d^{-1}) &\lesssim 1\\
D_2(d)&\lesssim  A_1(a^{-1})^p A_2(a)^p.
\end{align}
\end{subequations}

\item \label{item_existence_d}For each family $d:=\{d_Q\}$ of positive reals that satisfies  conditions \eqref{conditions_d}, there exists a family $a:=\{a_Q\}$  of positive reals that satisfies conditions \eqref{conditions_a}; in fact, such a family is given by 
$$
a_Q:=\big( \sum_{R\supseteq Q} \lambda_R d_R^{p'-1}\big)^{\frac{(1-q)(p-1)}{p-q}}.
$$ and satisfies the estimates:
\begin{subequations}
\begin{align}
A_1(a^{-1})&\lesssim D_1(d^{-1})^{\frac{1}{p}} D_2(d)^{\frac{(p-1)q}{p(p-q)}}\\
 A_2(a)&\lesssim  D_2(d)^{\frac{1-q}{p-q}}.
\end{align}
\end{subequations}

\end{enumerate}

\end{proposition}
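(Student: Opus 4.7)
The plan is to verify the two claimed estimates for each construction by direct computation, using Lemma \ref{lemma:equivalent_expressions} and Lemma \ref{lemma_summationbyparts} to convert $L^p$-integrals into equivalent weighted dyadic sums, and Hölder's inequality to split the $D_1$-type from the $D_2$-type contributions (and respectively $A_1$ from $A_2$).

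I would start with assertion (ii), which is more straightforward. Writing $\Sigma_Q := \sum_{R \supseteq Q} \lambda_R d_R^{p'-1}$, so that $a_Q = \Sigma_Q^\alpha$ with $\alpha = (1-q)(p-1)/(p-q)$, the bound on $A_2(a)$ follows immediately from the pointwise identity $\sup_{Q \ni x} \Sigma_Q = \sum_{R \ni x} \lambda_R d_R^{p'-1}$ (nestedness of dyadic cubes) combined with $\alpha \cdot q/(1-q) = (p-1)q/(p-q)$, yielding in fact the equality $A_2(a) = D_2(d)^{(1-q)/(p-q)}$. For $A_1(a^{-1})$, apply Lemma \ref{lemma:equivalent_expressions} to convert
\[
A_1(a^{-1})^{p'} = \int \Big( \sum_Q \lambda_Q \tfrac{\omega(Q)}{\sigma(Q)} \Sigma_Q^{-\alpha} 1_Q \Big)^{p'} d\sigma \eqsim \sum_Q \lambda_Q \omega(Q) \Sigma_Q^{-\alpha} H_Q^{p'-1},
\]
where $H_Q := \sigma(Q)^{-1} \sum_{R \subseteq Q} \lambda_R \omega(R) \Sigma_R^{-\alpha}$. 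Then apply Hölder's inequality with exponents $(p, p')$ to $H_Q$ through the factorization $\lambda_R \omega(R) = (\lambda_R d_R^{-1} \omega(R))^{1/p} (\lambda_R d_R^{p'-1} \omega(R))^{1/p'}$ to extract a $D_1(d^{-1})^{1/p}$ factor. The residual, involving $\sum_{R \subseteq Q} \lambda_R d_R^{p'-1} \omega(R) \Sigma_R^{-\alpha p'}$, is handled by telescoping via $\Sigma_R - \Sigma_{\hat R} = \lambda_R d_R^{p'-1}$, which after a second Hölder on the outer $Q$-sum produces the factor $D_2(d)^{(p-1)q/(p(p-q))}$.

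For assertion (i), set $g_R := \sigma(R)^{-1} \sum_{S \subseteq R} \lambda_S a_S^{-1} \omega(S)$ and $M_Q := \sup_{R \supseteq Q} g_R$, so that $d_Q = a_Q M_Q$. The bound $D_1(d^{-1}) \leq 1$ is immediate from the monotonicity $M_R \geq g_Q$ for $R \subseteq Q$ (since $Q$ itself is among the cubes over which $M_R$ is the supremum):
\[
\sum_{R \subseteq Q} \lambda_R d_R^{-1} \omega(R) = \sum_{R \subseteq Q} \lambda_R a_R^{-1} M_R^{-1} \omega(R) \leq g_Q^{-1} \sum_{R \subseteq Q} \lambda_R a_R^{-1} \omega(R) = g_Q^{-1} \cdot g_Q \sigma(Q) = \sigma(Q).
\]
For the bound on $D_2(d)$, apply Lemma \ref{lemma_summationbyparts} with exponent $s := (p-1)q/(p-q) \in (0,1)$ to obtain
\[
\int \Big( \sum_Q \lambda_Q d_Q^{p'-1} 1_Q \Big)^s d\omega \eqsim \sum_Q \lambda_Q d_Q^{p'-1} \omega(Q) \Sigma_Q^{s-1}.
\]
Substitute $d_Q = a_Q M_Q$, use the pointwise bound $M_Q \leq G(x) := \sup_{R \ni x} g_R$ for $x \in Q$, and apply Hölder's inequality with exponents matched so that one factor aligns with $A_2(a) = \|\sup_Q a_Q 1_Q\|_{L^{q/(1-q)}(\omega)}$ and the other with $A_1(a^{-1}) \eqsim \|G\|_{L^{p'}(\sigma)}$ (the sup-form equivalence given by Lemma \ref{lemma:equivalent_expressions}).

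The main obstacle is the mismatch of reference measures in assertion (i): $A_1(a^{-1})$ is naturally an $L^{p'}(\sigma)$-norm of the maximal function $G$, while the $D_2(d)$ integral is taken against $\omega$. The resolution relies on exploiting the defining identity $g_R \sigma(R) = \sum_{S \subseteq R} \lambda_S a_S^{-1} \omega(S)$, which interlocks $\sigma$- and $\omega$-integrals, together with careful bookkeeping to confirm that the exponents $\alpha$, $q/(1-q)$, $(p-1)q/(p-q)$, and $p'$ combine as required.
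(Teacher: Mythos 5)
Your proposal correctly isolates the two elementary halves: in (ii) the exact identity $A_2(a)=D_2(d)^{(1-q)/(p-q)}$ follows from $\sup_{Q\ni x}\Sigma_Q=\sum_{R\ni x}\lambda_R d_R^{p'-1}$ and $\alpha\cdot\frac{q}{1-q}=\frac{(p-1)q}{p-q}$, and in (i) the bound $D_1(d^{-1})\le 1$ follows by omitting all but one cube from $M_R=\sup_{P\supseteq R}g_P$, exactly as in the paper. Your use of Lemma~\ref{lemma:equivalent_expressions} to rewrite $A_1(a^{-1})^{p'}$ and $D_2(d)^{q/(p-q)}$ as weighted dyadic sums is also aligned with the paper's proof.

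However, the harder halves are not handled. The paper proves them by first invoking Lemma~\ref{lemma_asub1} (for (ii)) and Lemma~\ref{lemma_d2_sub} (for (i)), which package the needed estimate as a factorization in discrete Littlewood--Paley spaces. These lemmas are \emph{not} reducible to a pointwise H\"older: the factor $f^{\infty,p}(\sigma)$ in Lemma~\ref{lemma_asub1} is a \emph{Carleson} norm, and the inequality $\norm{uv}_{f^{p',1}(\sigma)}\lesssim\norm{u}_{f^{p',p'}(\sigma)}\norm{v}_{f^{\infty,p}(\sigma)}$ encodes a Carleson-embedding argument (Proposition~\ref{proposition:factorization_littlewood}, after Cohn--Verbitsky). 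Your plan for (ii) --- H\"older $H_Q$ with exponents $(p,p')$, ``telescope'' the residual $S_Q=\sum_{R\subseteq Q}\lambda_R d_R^{p'-1}\omega(R)\Sigma_R^{-\alpha p'}$, then a ``second H\"older on the outer $Q$-sum'' --- does not produce this. Concretely, the first H\"older gives $H_Q^{p'-1}\lesssim D_1(d^{-1})^{\frac{1}{p(p-1)}}\sigma(Q)^{-1/p}S_Q^{1/p}$, and the natural second H\"older on $\sum_Q\lambda_Q\omega(Q)\Sigma_Q^{-\alpha}\sigma(Q)^{-1/p}S_Q^{1/p}$ leaves, after interchanging summation, a factor $\sum_{Q\supseteq R}\lambda_Q d_Q^{-1}\omega(Q)/\sigma(Q)$ --- an \emph{upward} sum that the Carleson condition $D_1(d^{-1})$ (which controls \emph{downward} sums $\sum_{R\subseteq Q}$) does not bound. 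The proposed telescoping $\Sigma_R-\Sigma_{\hat R}=\lambda_R d_R^{p'-1}$ also does not close because of the $\omega(R)$ weights inside $S_Q$. Similarly, in (i) you flag the measure mismatch but do not resolve it; the paper's resolution is that \eqref{eq_equivalent_norm_sup} turns $A_1(a^{-1})^{p'}$ into $\sum_Q\lambda_Q a_Q^{-1}\omega(Q)\bigl(\sup_{R\supseteq Q}g_R\bigr)^{p'-1}$, which matches term-for-term the middle piece $\sum_Q\lambda_Q e_Q^{-p'}\omega(Q)d_Q^{p'-1}$ in the factorization of $D_2(d)$ from Lemma~\ref{lemma_d2_sub} once one sets $e_Q:=a_Q$ and $d_Q:=a_Q\sup_{R\supseteq Q}g_R$. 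In short, the missing idea is precisely the Littlewood--Paley-space factorization (Lemmas~\ref{lemma_d2_sub}, \ref{lemma_asub1}), and the substitute H\"older/telescoping steps you describe do not go through as stated.
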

We note that, by the proposition, for each family $a$ there exists a family $d$ such that the estimate $$(D_1(d^{-1}))^{\frac{1}{p}}(D_2(d))^{\frac{1}{p}}\lesssim A_1(a^{-1}) A_2(a)$$ holds, and, conversely, for each family $d$ there exists a family $a$ such that the reverse estimate $$A_1(a^{-1}) A_2(a)\lesssim (D_1(d^{-1}))^{\frac{1}{p}}(D_2(d))^{\frac{1}{p}}$$ holds. Combining these estimates with the characterization by factorization (Theorem \ref{theorem_factorization}) yields the characterization by auxiliary coefficients (Theorem \ref{theorem_maintheorem}).

To prepare for the proof of the proposition, we split each of the conditions \eqref{cond_d2} and  \eqref{cond_a1} into equivalent subconditions by writing out factorizations in the Littlewood--Paley spaces. 
\begin{lemma}[Factorization of condition \eqref{cond_d2}]\label{lemma_d2_sub} Let $d:=\{d_Q\}$ be a family of positive reals. Then the following assertions hold:

\begin{enumerate}[label=(\roman*)]
\item Every family $e:=\{e_Q\}$ of positive reals satisfies  the estimate
\begin{equation}\label{eq_upperde}
\begin{split}
&\Big(\int \big( \sum_Q \lambda_Q d_Q^{p'-1} 1_Q \big)^{\frac{(p-1)q}{p-q}} \domega\Big)^{\frac{p-q}{q}} \\
&\lesssim \Big(\sum_Q \lambda_Q e_Q^{-p'} \omega(Q) d_Q^{p'-1}\Big)^{p-1} \Big(\int \big( \sup e_Q 1_Q \big)^{\frac{q}{1-q}} \Big)^{\frac{p(1-q)}{q}}.
\end{split}
\end{equation}
\item Some family $e:=\{e_Q\}$  of positive reals (which depends on the family $d:=\{d_Q\}$) satisfies the reverse of  estimate \eqref{eq_upperde}.

\end{enumerate}
\end{lemma}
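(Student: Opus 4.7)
The plan is to recast both sides of estimate \eqref{eq_upperde} as powers of Littlewood--Paley norms and then derive both assertions as direct consequences of the Cohn--Verbitsky factorization (Proposition \ref{proposition:factorization_littlewood}). No additional Maurey-type argument is required beyond what was already invoked for Theorem \ref{theorem_factorization}.

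First, I would set $\alpha := \frac{(p-1)q}{p-q}$, which lies in $(0,1)$ since $q<1<p$, and $c_Q := \lambda_Q d_Q^{p'-1}$. A short exponent arithmetic based on $(p-1)/\alpha = (p-q)/q$ shows that the LHS of \eqref{eq_upperde} equals $\lVert c\rVert_{f^{\alpha,1}(\omega)}^{p-1}$. Writing $v_Q := \lambda_Q e_Q^{-p'} d_Q^{p'-1}$, so that $c_Q = e_Q^{p'} v_Q$, the first factor of the RHS equals $\lVert v\rVert_{f^{1,1}(\omega)}^{p-1}$ (since $\lVert v\rVert_{f^{1,1}(\omega)} = \sum_Q v_Q\,\omega(Q)$), and the second factor equals $\lVert e\rVert_{f^{q/(1-q),\infty}(\omega)}^{p}$.

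Next, I would introduce $r := \frac{(p-1)q}{p(1-q)}$ and verify the H\"older relations needed for the factorization $f^{\alpha,1}(\omega) = f^{1,1}(\omega) \cdot f^{r,\infty}(\omega)$: a direct check gives $\frac{1}{\alpha} = \frac{1}{1} + \frac{1}{r}$, $1 = \frac{1}{1} + \frac{1}{\infty}$, and the key identity $rp' = \frac{q}{1-q}$. Combining $rp' = \frac{q}{1-q}$ with the scaling property $\lVert e^{p'}\rVert_{f^{r,\infty}} = \lVert e\rVert_{f^{rp',\infty}}^{p'}$ gives $\lVert e\rVert_{f^{q/(1-q),\infty}}^{p} = \lVert e^{p'}\rVert_{f^{r,\infty}}^{p-1}$, so that the RHS of \eqref{eq_upperde} rewrites as $\bigl(\lVert v\rVert_{f^{1,1}(\omega)} \cdot \lVert e^{p'}\rVert_{f^{r,\infty}(\omega)}\bigr)^{p-1}$.

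With both sides of \eqref{eq_upperde} thus expressed in terms of the factorization $c_Q = e_Q^{p'} v_Q$ inside $f^{\alpha,1}(\omega) = f^{r,\infty}(\omega) \cdot f^{1,1}(\omega)$, assertion (i) follows at once from Proposition \ref{proposition:factorization_littlewood}(i) applied to this factorization and raised to the $(p-1)$-th power. For assertion (ii), I would apply Proposition \ref{proposition:factorization_littlewood}(ii) to the given $c\in f^{\alpha,1}(\omega)$, obtaining a factorization $c_Q = u_Q v_Q$ with $\lVert u\rVert_{f^{r,\infty}}\,\lVert v\rVert_{f^{1,1}} \lesssim \lVert c\rVert_{f^{\alpha,1}}$; defining $e_Q := u_Q^{1/p'}$ automatically produces the formula $v_Q = \lambda_Q e_Q^{-p'} d_Q^{p'-1}$ and yields the reverse estimate. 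The main obstacle is purely bookkeeping: matching the somewhat ornate exponents in \eqref{eq_upperde} with the H\"older relations that govern the factorization of $f^{\alpha,1}$, and tracking how the substitution $e_Q \leftrightarrow e_Q^{p'}$ interacts with the scaling of Littlewood--Paley norms.
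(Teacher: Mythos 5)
Your proposal is correct and takes essentially the same route as the paper: both deduce the lemma from the Cohn--Verbitsky factorization (Proposition \ref{proposition:factorization_littlewood}) applied to the family $c_Q=\lambda_Q d_Q^{p'-1}$. The paper writes the factorization in the scaled form $f^{1,\frac{p-q}{(p-1)q}}(\omega)= f^{\frac{p-q}{(p-1)q},\frac{p-q}{(p-1)q}}(\omega)\cdot f^{\frac{p-q}{p(1-q)}, \infty}(\omega)$, which is equivalent to your $f^{\alpha,1}=f^{1,1}\cdot f^{r,\infty}$ via the scaling property $\norm{\{a_Q^s\}}_{f^{p,q}}=\norm{\{a_Q\}}_{f^{sp,sq}}$; your exponent bookkeeping and the substitution $e_Q=u_Q^{1/p'}$ in part (ii) are all sound.
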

\begin{proof}
The lemma follows from writing out the factorization $$f^{1,\frac{p-q}{(p-1)q}}(\omega)= f^{\frac{p-q}{(p-1)q},\frac{p-q}{(p-1)q}}(\omega)\cdot f^{\frac{p-q}{p(1-q)}, \infty}(\omega)$$ of the Littlewood--Paley spaces (stated in Proposition \ref{proposition:factorization_littlewood}).
\end{proof}

\begin{lemma}[Factorization of condition \eqref{cond_a1}]\label{lemma_asub1}

Let $a^{-1}:=\{a^{-1}_Q\}$ be a family of positive reals. Then the following assertions hold:

\begin{enumerate}[label=(\roman*)]
\item Every family $b:=\{b_Q\}$  of positive reals satisfies the estimate
\begin{equation}\label{eq_upperab}
\begin{split}
&\Big(\int \big( \sum_Q \lambda_Q \frac{\omega(Q)}{\sigma(Q)} a_Q^{-1} 1_Q \big)^{p'} \dsigma\Big)^{\frac{1}{p'}}\\
&\lesssim 
\Big(\sup_Q \frac{1}{\sigma(Q)}\sum_{R\subseteq Q} \lambda_R b_R^{-1} \omega(R)\Big)^{\frac{1}{p}}
\Big(\sum_Q \lambda_Q a_Q^{-p'} b_Q^{p'-1} \omega(Q)\Big)^{\frac{1}{p'}} .
\end{split}
\end{equation}
\item Some family $b:=\{b_Q\}$ of positive reals (which depends on the family $a:=\{a_Q\}$) satisfies the reverse of estimate \eqref{eq_upperab}.

\end{enumerate}

\end{lemma}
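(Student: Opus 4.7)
The plan is to recognize the left-hand side of \eqref{eq_upperab} as the discrete Littlewood--Paley norm $\norm{c}_{f^{p',1}(\sigma)}$ of the family $c_Q := \lambda_Q \frac{\omega(Q)}{\sigma(Q)} a_Q^{-1}$, and then to apply the H\"older-type factorization
$$
f^{p',1}(\sigma) = f^{p', p'}(\sigma) \cdot f^{\infty, p}(\sigma)
$$
supplied by Proposition \ref{proposition:factorization_littlewood}; the required exponent relations $\frac{1}{p'} = \frac{1}{p'} + \frac{1}{\infty}$ and $\frac{1}{1} = \frac{1}{p'} + \frac{1}{p}$ both hold. Once this is identified as the target factorization, the rest is bookkeeping: I just need to match the $f^{p',p'}(\sigma)$ and $f^{\infty,p}(\sigma)$ norms of two concrete families with the two factors on the right of \eqref{eq_upperab}.

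For the sufficiency part (i), given the auxiliary family $b$, I would introduce the explicit factorization $c_Q = u_Q v_Q$ defined by
$$
v_Q := \Big(\lambda_Q \, b_Q^{-1} \, \frac{\omega(Q)}{\sigma(Q)} \Big)^{1/p}, \qquad u_Q := \lambda_Q^{1/p'}\, b_Q^{1/p}\, \Big(\frac{\omega(Q)}{\sigma(Q)}\Big)^{1/p'}\, a_Q^{-1}.
$$
A direct computation (using the identity $p'/p = p'-1$) then gives
$$
\norm{v}_{f^{\infty, p}(\sigma)}^{p} = \sup_Q \frac{1}{\sigma(Q)} \sum_{R \subseteq Q} \lambda_R \, b_R^{-1} \, \omega(R), \qquad \norm{u}_{f^{p',p'}(\sigma)}^{p'} = \sum_Q \lambda_Q \, a_Q^{-p'} \, b_Q^{p'-1} \, \omega(Q),
$$
so that the estimate $\norm{c}_{f^{p',1}(\sigma)} \lesssim \norm{u}_{f^{p',p'}(\sigma)} \norm{v}_{f^{\infty, p}(\sigma)}$ from Proposition \ref{proposition:factorization_littlewood} is precisely the required bound \eqref{eq_upperab}.

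For the necessity part (ii), I would appeal to the second assertion of Proposition \ref{proposition:factorization_littlewood} to obtain an optimal factorization $c_Q = \tilde{u}_Q \tilde{v}_Q$ into an $f^{p',p'}(\sigma)$ and an $f^{\infty,p}(\sigma)$ factor with $\norm{\tilde{u}}_{f^{p',p'}(\sigma)} \norm{\tilde{v}}_{f^{\infty, p}(\sigma)} \lesssim \norm{c}_{f^{p',1}(\sigma)}$, and then read off $b$ by inverting the relation between $\tilde{v}_Q$ and $b_Q$ used in (i); that is, I would set
$$
b_Q := \lambda_Q \, \frac{\omega(Q)}{\sigma(Q)} \, \tilde{v}_Q^{-p}.
$$
With this choice, the two identities displayed above hold with $u, v$ replaced by $\tilde{u}, \tilde{v}$, which yields the reverse of \eqref{eq_upperab}. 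The only mild subtlety is to ensure that $\tilde{v}_Q$ can be taken strictly positive on $\cq$ so that $b_Q$ is well defined and positive; since $c_Q>0$ on $\cq$, this can be arranged by a routine regularization of the factorization given by Proposition \ref{proposition:factorization_littlewood}(ii), after which the entire argument reduces to the algebraic bookkeeping already carried out for (i).
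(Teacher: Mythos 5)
Your proposal is correct and takes the same approach as the paper: the paper's proof consists of the single line invoking the factorization $f^{p',1}(\sigma)=f^{p',p'}(\sigma)\cdot f^{\infty,p}(\sigma)$ from Proposition~\ref{proposition:factorization_littlewood}, and you have simply written out the bookkeeping — identifying $c_Q=\lambda_Q\frac{\omega(Q)}{\sigma(Q)}a_Q^{-1}$, supplying the explicit factors $u_Q,v_Q$, and verifying the norm identities with $p'/p=p'-1$ — that the paper leaves implicit. The positivity concern you flag is harmless: since $c_Q>0$ on $\cq$, any factorization from Proposition~\ref{proposition:factorization_littlewood}(ii) automatically has both factors nonvanishing there (and may be taken positive), so $b_Q$ is well defined.
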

\begin{proof}
The lemma follows from writing out the factorization $$f^{p',1}(\sigma)= f^{p',p'}(\sigma)\cdot f^{\infty,p}(\sigma)$$ of the Littlewood--Paley spaces (stated in Proposition \ref{proposition:factorization_littlewood}).
\end{proof}

We are now prepared for the proof of the proposition:

\begin{proof}[Proof of Proposition \ref{proposition_equivalenceofauxiliaryfamilies}]
First, we prove  assertion \ref{item_existence_d}. Assume that $d$ is a family that satisfies the conditions 
\begin{subequations}
\begin{align}
&
\sup_Q \frac{1}{\sigma(Q)} \sum_{R\subseteq Q} \lambda_Q d_Q^{-1} \omega(Q) < \infty \label{condition_da} \\
&
\int \big( \sum_Q \lambda_Q d_Q^{p'-1} 1_Q \big)^{\frac{(p-1)q}{p-q}}  \domega < \infty\label{condition_db} 
\end{align}
\end{subequations}
Since, by Lemma \ref{lemma:equivalent_expressions}, we have $$ \sum_Q \lambda_Q d_Q^{p'-1} \omega(Q) \big( \sum_{R\supseteq Q}\lambda_R d_R^{p'-1} \big)^{-\frac{p(1-q)}{p-q}}\eqsim \int \big( \sum_Q \lambda_Q d_Q^{p'-1} 1_Q \big)^{\frac{(p-1)q}{p-q}},$$ 
 condition \eqref{condition_db} is equivalent to the condition
\begin{equation}
\label{cond_d2variant}
\sum_Q \lambda_Q d_Q^{p'-1} \omega(Q) \big( \sum_{R\supseteq Q}\lambda_R d_R^{p'-1} \big)^{-\frac{p(1-q)}{p-q}} < \infty
\end{equation}
By factorization (Lemma \ref{lemma_asub1}), it is sufficient (and necessary) to construct families $a$ and $b$ (which depend on the family $d$) that satisfy the conditions
\begin{subequations}
\begin{align}
&\ \sup_Q \frac{1}{\sigma(Q)}\sum_{R\subseteq Q} \lambda_R b_R^{-1} \omega(R) \infty\label{condition_ba}\\
& \sum_Q \lambda_Q a_Q^{-p'} b_Q^{p'-1} \omega(Q) < \infty \label{condition_bb}\\
& \int \big( \sup_Q a_Q 1_Q \big)^{\frac{q}{1-q}} \domega < \infty \label{condition_bc}
\end{align}
\end{subequations}
Comparing condition \eqref{cond_d2variant} with condition \eqref{condition_bb}, we set
$$
a_Q^{-p'}:= \big( \sum_{R\supseteq Q}\lambda_R d_R^{p'-1} \big)^{-\frac{p(1-q)}{p-q}} \quad \text{ and } \quad b_Q:=d_Q.
$$
With these choices, condition \eqref{condition_bb} coincides with condition \eqref{cond_d2variant},  condition \eqref{condition_bc} with  condition \eqref{condition_db}, and  condition \eqref{condition_ba} with condition \eqref{condition_da}. 

Next, we prove assertion \ref{item_existence_a}.  Assume that $a$ is a family that satisfies the conditions 
\begin{subequations}
\begin{align}
&
 \int \big( \sum_Q \lambda_Q \frac{\omega(Q)}{\sigma(Q)} a_Q^{-1} 1_Q \big)^{p'} \dsigma < \infty \label{condition__aa}\\
& \int \big( \sup_Q a_Q 1_Q \big)^{\frac{q}{1-q}} \domega  < \infty\label{condition__ab}
\end{align}
\end{subequations}
Since, by the comparison \eqref{eq_equivalent_norm_sup}, we have $$\int \big( \sum_Q \lambda_Q \frac{\omega(Q)}{\sigma(Q)} a_Q^{-1} 1_Q \big)^{p'} \dsigma\eqsim \sum_Q \lambda_Q a_Q^{-1} \omega(Q) \big( \sup_{R\supseteq Q} \frac{1}{\sigma(R)} \sum_{S\subseteq R} \lambda_S a_S^{-1} \omega(S) \big)^{p'-1},$$
condition \eqref{condition__aa} is equivalent to the condition
\begin{equation}
\label{condition__aa_variant}  \sum_Q \lambda_Q a_Q^{-1} \omega(Q) \big( \sup_{R\supseteq Q} \frac{1}{\sigma(R)} \sum_{S\subseteq R} \lambda_S a_S^{-1} \omega(S) \big)^{p'-1} < \infty .
\end{equation}
By factorization (Lemma \ref{lemma_d2_sub}), it is sufficient (and necessary) to construct families $d$ and $e$ (which depend on the family $a$) that satisfy
the conditions

\begin{subequations}
\begin{align}
& \sup_Q \frac{1}{\sigma(Q)} \sum_{R\subseteq Q} \lambda_Q d_Q^{-1} \omega(Q)   < \infty\label{condition__ea}\\
 & \sum_Q \lambda_Q e_Q^{-1} \omega(Q) (e_Q^{-1} d_Q )^{p'-1} < \infty \label{condition__eb}\\
 & \int \big( \sup e_Q 1_Q \big)^{\frac{q}{1-q}} \domega < \infty\label{condition__ec}.
\end{align}
\end{subequations}

Comparing condition \eqref{condition__aa_variant}  with condition \eqref{condition__eb}, we set
$$
e_Q:= a_Q \quad \text{ and } \quad e_Q^{-1}d_Q:=\big( \sup_{R\supseteq Q} \frac{1}{\sigma(R)} \sum_{S\subseteq R} \lambda_S a_S^{-1} \omega(S) \big).  
$$
With these choices, condition \eqref{condition__eb} becomes condition \eqref{condition__aa_variant}, condition \eqref{condition__ec} becomes condition \eqref{condition__ab}. Condition \eqref{condition__ea} also holds because, by omitting all but one cube from the supremum, we have:
\begin{equation*}
\begin{split}
&\sum_{Q\subseteq P} \lambda_Q d_Q^{-1} \omega(Q)= \sum_{Q\subseteq P} \frac{\lambda_Q a_Q^{-1} \omega(Q)}{\sup_{R\supseteq Q} \frac{1}{\sigma(R)} \sum_{S\subseteq R} \lambda_S a_S^{-1} \omega(S)}\\
&\leq \sum_{Q\subseteq P} \frac{\lambda_Q a_Q^{-1} \omega(Q)}{ \frac{1}{\sigma(P)} \sum_{S\subseteq P} \lambda_S a_S^{-1} \omega(S)} \leq \sigma(P).
\end{split}
\end{equation*}

The claimed comparisons of the appropriate powers of the quantities $A_1,A_2$ and $D_1,D_2$ can be seen from  Lemma \ref{lemma_d2_sub} and Lemma \ref{lemma_asub1}, or alternatively, the appropriate powers can be determined by matching the homogeneity in the comparisons under the scaling with respect to the family $a$ or $d$, the family $\lambda$, and the measures $\sigma$ and $\omega$. The proof is complete.
\end{proof}

We conclude this section by recording the following factorization of condition \eqref{cond_a1}.

\begin{lemma}[Another factorization of condition \eqref{cond_a1}]\label{lemma:subconditions_trivial_factorization}

Let $a^{-1}:=\{a^{-1}_Q\}$ be a family of positive reals. Then the following assertions hold:

\begin{enumerate}[label=(\roman*)]
\item Every family $c:=\{c_Q\}$ of positive reals satisfies the estimate
\begin{equation}\label{eq_upperac}
\begin{split}
&\Big(\int \big( \sum_Q \lambda_Q \frac{\omega(Q)}{\sigma(Q)} a_Q^{-1} 1_Q \big)^{p'} \dsigma\Big)^{\frac{1}{p'}}\\
&\lesssim 
\Big(\sup_Q \frac{c_Q^{-1}}{a_Q^{-1}\sigma(Q)} \sum_{R\subseteq Q} \lambda_R a_R^{-1}\omega(R)\Big)^{\frac{1}{p'}}
\Big(\sum_Q \lambda_Q a_Q^{-p'} c_Q^{p'-1} \omega(Q) \Big)^{\frac{p-1}{p'}}.
\end{split}
\end{equation}
\item Some family $c:=\{c_Q\}$ of positive reals (which depends on the family $a:=\{a_Q\}$) satisfies the reverse of estimate \eqref{eq_upperac}.

\end{enumerate}

\end{lemma}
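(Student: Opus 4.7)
The plan is to prove this as a close variant of Lemma~\ref{lemma_asub1}: whereas that lemma factored the $f^{p',1}(\sigma)$ norm via the nontrivial splitting $f^{p',p'}(\sigma)\cdot f^{\infty,p}(\sigma)$, the present ``another'' factorization only needs a single pointwise supremum extraction, done by hand after discretizing, which justifies the descriptor ``trivial''.

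First, I will apply Lemma~\ref{lemma:equivalent_expressions} to rewrite the $p'$-th power of the left-hand side of \eqref{eq_upperac} as
\begin{equation*}
\int \big( \sum_Q \lambda_Q \tfrac{\omega(Q)}{\sigma(Q)} a_Q^{-1} 1_Q \big)^{p'} \dsigma \eqsim_{p'} \sum_Q \lambda_Q a_Q^{-1} \omega(Q) T_Q^{p'-1},
\end{equation*}
where $T_Q := \tfrac{1}{\sigma(Q)} \sum_{R\subseteq Q} \lambda_R a_R^{-1} \omega(R)$ is precisely the inner quantity appearing under the supremum on the right-hand side of \eqref{eq_upperac}.

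For the sufficiency in (i), I will use, for any positive auxiliary family $\{c_Q\}$, the algebraic identity
\begin{equation*}
\lambda_Q a_Q^{-1} \omega(Q) T_Q^{p'-1} = \big(\lambda_Q a_Q^{-p'} c_Q^{p'-1} \omega(Q)\big) \cdot \Big(\tfrac{c_Q^{-1}}{a_Q^{-1}} T_Q\Big)^{p'-1},
\end{equation*}
and estimate the second factor uniformly in $Q$ by its supremum to pull that supremum out of the summation. This reduces the bound to a product of the supremum (raised to the appropriate power) and the sum $\sum_Q \lambda_Q a_Q^{-p'} c_Q^{p'-1} \omega(Q)$; taking a $p'$-th root then yields \eqref{eq_upperac}.

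For the necessity in (ii), I will choose $c_Q := a_Q T_Q$. With this choice $\tfrac{c_Q^{-1}}{a_Q^{-1}} T_Q = \tfrac{a_Q}{c_Q} T_Q \equiv 1$, so the supremum factor equals $1$, while $\sum_Q \lambda_Q a_Q^{-p'} c_Q^{p'-1} \omega(Q) = \sum_Q \lambda_Q a_Q^{-1} T_Q^{p'-1} \omega(Q)$, which by the first step is comparable to the $p'$-th power of the left-hand side of \eqref{eq_upperac}. Hence, for this $c$, the right-hand side of \eqref{eq_upperac} is bounded by the left-hand side, giving the reverse estimate. The main obstacle is simply bookkeeping: the relation $p'-1 = 1/(p-1)$ interchanges the roles of $p$ and $p'$ at several places, and one must verify the homogeneity of the final expression under scalings of $\{a_Q\}$ and $\{c_Q\}$ separately to pin down the exact exponents attached to the supremum factor and the sum factor on the right-hand side.
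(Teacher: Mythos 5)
Your proof is correct and uses essentially the same argument as the paper's: apply Lemma~\ref{lemma:equivalent_expressions} to turn the integral into a single sum over cubes, then peel a pointwise supremum off the summand (the paper's ``trivial observation''); the choice $c_Q := a_Q T_Q$ with $T_Q := \tfrac{1}{\sigma(Q)}\sum_{R\subseteq Q}\lambda_R a_R^{-1}\omega(R)$ gives the sharp reverse bound in part (ii). Your instinct to pin down the exponents by homogeneity is well placed and in fact catches something: scaling $\lambda\to t\lambda$ and $a\to sa$ forces the supremum factor to carry exponent $\tfrac{1}{p}$ and the sum factor exponent $\tfrac{1}{p'}$, which is exactly what your computation produces, whereas \eqref{eq_upperac} as printed has $\tfrac{1}{p'}$ and $\tfrac{p-1}{p'}$, agreeing with the homogeneity-dictated values only when $p=2$. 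This appears to be a typo in the lemma's statement, consistent with the Appendix's remark that the exact powers in such conditions are to be determined by matching homogeneity.
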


\begin{proof}We make the following trivial observation: for every families $\{a_i\}$ and $\{b_i\}$ of positive reals, we have that $\sum_i a_i b_i \leq C $ if and only there exists a family $\{c_i\}$ of positive reals such that $\sum_i a_i c_i \leq C$ and  $ \sup_{i} b_i c_i^{-1} \leq 1 $. Using Lemma \ref{lemma:equivalent_expressions}, we write
$$
\int \big( \sum_Q \lambda_Q \frac{\omega(Q)}{\sigma(Q)} 1_Q \big)^{p'} \dsigma \eqsim \sum_Q \lambda_Q \omega(Q)  \Big( \frac{1}{\sigma(Q)} \sum_{R\subseteq Q} \lambda_R a_R^{-1} \omega(R) \big)^{p'-1}
$$
Applying this trivial observation to the summation on the right hand-side yields the lemma.
\end{proof}
\section{Scale of generalized Wolff potential conditions}
All the indexations throughout this section are restricted to the subcollection
$$
\cq:=\{Q\in \cd : \lambda_Q>0, \sigma(Q)>0, \text{ and }  \omega(Q)>0\}
$$
of dyadic cubes, and hence no division by zero occurs.  We abbreviate the indexation  \quotes{$Q\in \cq$} as \quotes{$Q$}. 
\label{sec_generalized_wolff}
\subsection{Sufficiency for large parameters and related conditions}
Applying characterizations by auxiliary coefficients through constructing auxiliary families by hand, we prove the following proposition:
\begin{proposition}[Sufficient integral conditions]Let $0<q< 1< p<\infty$. Let $\sigma$ and $\omega$ be locally finite Borel measures. Let $\{\lambda_Q\}_{Q\in\cq}$ be a family of non-negative reals associated with  the operator $T_\lambda(\cdotroomy \sigma)$. Then the two-weight norm inequality \eqref{eq_norminequality} holds if any one of the following integral conditions is satisfied:
\begin{enumerate}[label=(\roman*)]
\item (Wolff potential condition) We have
\begin{equation}\label{eq_wolffpotentialcondition}
\int \big( \sum_{Q\in \cq}  \lambda_Q \Big(\frac{\omega(Q)}{\sigma(Q)} \Lambda_Q \Big)^{p'-1}1_Q\big)^{\frac{(p-1)q}{p-q}} \domega < \infty.
\end{equation}
\item (Variant of the Wolff potential condition) Let $\gamma\in(0,\infty)$. We have
\begin{equation}\label{eq_wolffpotentialcondition_variant}
\int \big( \sum_{Q\in \cq} \lambda_Q \Big((\Lambda_{\gamma-1,Q})^{1-\gamma} \big(\sup_{R\supseteq Q} \frac{\omega(R)}{\sigma(R)} (\Lambda_{\gamma,R})^{\gamma}\big)\Big)^{p'-1} 1_Q\big)^{\frac{(p-1)q}{p-q}} \domega < \infty.
\end{equation}

\end{enumerate}

\end{proposition}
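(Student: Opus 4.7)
The plan is to apply Proposition~\ref{proposition_equivalenceofauxiliaryfamilies} (the equivalence between the $\{a_Q\}$-characterization of Theorem~\ref{theorem_maintheorem} and the $\{d_Q\}$-conditions~$D_1,D_2$), reducing each integral condition to the construction of an auxiliary family $d$ satisfying a Carleson-type bound $D_1(d^{-1})\lesssim 1$ together with an integrability bound $D_2(d)<\infty$. The central analytic input is the summation-by-parts identity
\[
\omega(Q)\,\Lambda_{\gamma,Q}^{\gamma} \;\eqsim_{\gamma}\; \sum_{R\subseteq Q} \lambda_R\,\omega(R)\,\Lambda_{\gamma-1,R}^{\gamma-1}, \qquad \gamma>0,
\]
which is Lemma~\ref{lemma_summationbyparts} applied pointwise to $\rho_Q=\sum_{R\subseteq Q}\lambda_R 1_R$ with exponent $\gamma$, and then integrated against~$\omega$ over $Q$.

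For condition~(ii), set
\[
d_Q := \Lambda_{\gamma-1,Q}^{1-\gamma}\;\sup_{R\supseteq Q} \frac{\omega(R)}{\sigma(R)}\,\Lambda_{\gamma,R}^{\gamma}.
\]
With this choice $D_2(d)^{q/(p-q)}$ equals the integral in~(ii), hence is finite by hypothesis. For the Carleson bound $D_1(d^{-1})\lesssim 1$, the key observation is that for every $R\subseteq Q$ the supremum defining $d_R$ majorises its value at $S=Q$, which yields
\[
d_R^{-1} \;\leq\; \frac{\sigma(Q)}{\omega(Q)\,\Lambda_{\gamma,Q}^{\gamma}}\,\Lambda_{\gamma-1,R}^{\gamma-1}.
\]
Multiplying by $\lambda_R\omega(R)$, summing over $R\subseteq Q$, and invoking the identity above collapses the right-hand side to $\lesssim \sigma(Q)$ uniformly in $Q$. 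Proposition~\ref{proposition_equivalenceofauxiliaryfamilies} then returns the desired $L^p(\sigma)\to L^q(\omega)$ bound on $T_\lambda(\cdot\,\sigma)$.

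For condition~(i), the naive choice $d_Q:=(\omega(Q)/\sigma(Q))\Lambda_Q$ does make $D_2(d)$ match the Wolff potential integral, but the resulting Carleson sum $\sum_{R\subseteq Q}\lambda_R d_R^{-1}\omega(R) = \sum_{R\subseteq Q}\lambda_R\sigma(R)/\Lambda_R$ can be logarithmically divergent along deep chains of cubes, so Proposition~\ref{proposition_equivalenceofauxiliaryfamilies} cannot be fed the same~$d$. Instead, I would work inside Theorem~\ref{theorem_maintheorem} directly with $a_Q:=\Lambda_Q/\lambda_Q$, so that the integrand appearing in Theorem~\ref{theorem_maintheorem} collapses exactly to the Wolff integrand $\lambda_Q(\omega(Q)/\sigma(Q)\,\Lambda_Q)^{p'-1}$, and then invoke the refined factorization of Lemma~\ref{lemma:subconditions_trivial_factorization}: the exact $\gamma=1$ identity gives $\sum_{R\subseteq Q}\lambda_R a_R^{-1}\omega(R) = \sum_{R\subseteq Q}\lambda_R^2\omega(R)/\Lambda_R \leq \sum_{R\subseteq Q}\lambda_R\omega(R) = \omega(Q)\Lambda_Q$ (using $\lambda_R\leq\Lambda_R$), and the choice $c_Q^{-1}:=\lambda_Q\sigma(Q)/(\omega(Q)\Lambda_Q^2)$ forces the supremum factor in Lemma~\ref{lemma:subconditions_trivial_factorization} to be~$\lesssim 1$, leaving a Wolff-type sum on the integrability side. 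Finally one converts this sum to an integral via Lemma~\ref{lemma:equivalent_expressions} (equivalent discrete expressions) to reproduce the left-hand side of~(i) with the correct power.

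The main obstacle is this last step in case~(i): unlike~(ii), where the built-in supremum in $d_Q$ provides a one-line Carleson estimate, the Wolff condition without the supremum requires delicate bookkeeping between Lemma~\ref{lemma:subconditions_trivial_factorization}, the exact $\gamma=1$ summation-by-parts identity, and Lemma~\ref{lemma:equivalent_expressions} to reconstruct the Wolff integrand with the correct exponents, rather than merely a sum-based proxy that would only bound a weaker dual quantity. This is precisely the step where the DLBO restriction of~\cite{cascante2006} was previously used to short-circuit the argument by identifying $\Lambda_{\gamma,Q}$ with $\inf_{x\in Q}\rho_Q(x)$; doing without it while still retaining the Wolff bound is the main new ingredient.
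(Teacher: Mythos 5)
Your argument for (ii) is essentially the paper's: the same choice
\[
d_Q := (\Lambda_{\gamma-1,Q})^{1-\gamma}\sup_{R\supseteq Q}\frac{\omega(R)}{\sigma(R)}\Lambda_{\gamma,R}^\gamma,
\]
the same summation-by-parts comparison \eqref{equation:summation_by_parts_inside_integral} rewritten as $\omega(Q)\Lambda_{\gamma,Q}^\gamma \eqsim_\gamma \sum_{R\subseteq Q}\lambda_R\omega(R)\Lambda_{\gamma-1,R}^{\gamma-1}$, and the same one-cube bound of the supremum, give $D_1(d^{-1})\lesssim 1$. This part is fine.

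Your argument for (i) has a genuine gap. You are invoking Theorem~\ref{theorem_factorization} combined with Lemma~\ref{lemma:subconditions_trivial_factorization}, which requires you to produce families $\{a_Q\}$ and $\{c_Q\}$ satisfying \emph{all three} of conditions \eqref{condition___ca}, \eqref{condition___cb}, and \eqref{condition___cc}; the last of these is $\int(\sup_Q a_Q 1_Q)^{q/(1-q)}\domega<\infty$. With your choice $a_Q:=\Lambda_Q/\lambda_Q$ this third condition becomes $\int(\sup_Q(\Lambda_Q/\lambda_Q)1_Q)^{q/(1-q)}\domega<\infty$, which has no reason to follow from the Wolff condition \eqref{eq_wolffpotentialcondition}; indeed $\Lambda_Q/\lambda_Q$ is typically unbounded along chains of nested cubes (e.g., constant $\lambda_Q$ gives $\Lambda_Q/\lambda_Q \gtrsim$ number of cubes in the chain). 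You never verify this condition.

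In addition, the ``Wolff-type sum on the integrability side'' does not match the Wolff integral. With $a_Q^{-1}=\lambda_Q/\Lambda_Q$ and $c_Q=\omega(Q)\Lambda_Q^2/(\lambda_Q\sigma(Q))$ one computes
\[
\sum_Q \lambda_Q a_Q^{-p'}c_Q^{p'-1}\omega(Q)=\sum_Q \lambda_Q^2\,\Lambda_Q^{p'-2}\Big(\frac{\omega(Q)}{\sigma(Q)}\Big)^{p'-1}\omega(Q),
\]
which under the rescaling $\lambda\to t\lambda$ scales as $t^{p'}$, whereas the Wolff integral in \eqref{eq_wolffpotentialcondition} scales as $t^{pq/(p-q)}$; these agree only at $q=1$. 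So Lemma~\ref{lemma:equivalent_expressions} cannot reconstitute \eqref{eq_wolffpotentialcondition} from this sum. The paper closes the gap differently: it chooses $a_Q:=\big(\sum_{R\supseteq Q}\lambda_R(\omega(R)/\sigma(R))^{p'-1}\Lambda_R^{p'-1}\big)^{(p-1)(1-q)/(p-q)}$, which is designed so that $\int(\sup_Q a_Q 1_Q)^{q/(1-q)}\domega$ \emph{is} the Wolff integral \eqref{eq_wolffpotentialcondition}; since this $a_Q$ is increasing along shrinking cubes, the Carleson condition \eqref{condition___ca} with $c_Q:=\frac{\omega(Q)}{\sigma(Q)}\Lambda_Q$ reduces to $\sum_{R\subseteq Q}\lambda_R\omega(R)\le c_Q\sigma(Q)$, which holds by the definition of $\Lambda_Q$; and \eqref{condition___cb} becomes the discrete summation-by-parts form of the same Wolff integral. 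Your $a_Q:=\Lambda_Q/\lambda_Q$ would only be useful in the $D_1,D_2$ framework of Theorem~\ref{theorem_maintheorem}, but there, as you already noted, the Carleson condition $\sup_Q\sigma(Q)^{-1}\sum_{R\subseteq Q}(\lambda_R/\Lambda_R)\sigma(R)$ is also not bounded in general, so the approach does not close either way.
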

\begin{proof} First, we check the sufficiency of  condition \eqref{eq_wolffpotentialcondition_variant}. By the characterization by auxiliary coefficients (Theorem \ref{theorem_maintheorem}), it suffices to construct a family $\{d_Q\}$ that satisfies the conditions:
\begin{subequations}
\begin{align}
&\label{condition___da}\sup_Q \frac{1}{\sigma(Q)} \sum_{R\subseteq Q} \lambda_R d_R^{-1} \omega(Q)   < \infty\\
&\label{condition___db} \int \big( \sum_Q\lambda_Q d_Q^{p'-1} 1_Q \big)^{\frac{(p-1)q}{p-q}} < \infty 
\end{align}
\end{subequations}
We choose
$$
d_Q:= (\Lambda_{\gamma-1,Q})^{1-\gamma} \sup_{R\supseteq Q}\frac{\omega(R)}{\sigma(R)} \Lambda_{\gamma,R}^\gamma,
$$
so that condition $\eqref{condition___db}$ becomes the assumed condition \eqref{eq_wolffpotentialcondition_variant}. It remains to check condition \eqref{condition___da} as follows. By writing out the expression, we have
$$
d_Q^{-1}= \frac{\frac{1}{\omega(Q)} \int \big( \sum_{R\subseteq Q} \lambda_R 1_R\big)^{\gamma-1} \domega }{\sup_{R\supseteq Q} \frac{1}{\sigma(R) }\int \big( \sum_{S\subseteq R} \lambda_S 1_S \big)^{\gamma} \domega}.
$$
By omitting all but one cube from the supremum, and by summation by parts (the comparison \eqref{equation:summation_by_parts_inside_integral}), we have
\begin{equation*}
\begin{split}
&\sum_{Q\subseteq P} \lambda_Q d_Q^{-1} \omega(Q)\leq \sigma(P) \frac{\int \sum_{Q\subseteq P} \lambda_Q \big( \sum_{R\subseteq Q} \lambda_R 1_R\big)^{\gamma-1} \domega }{\int \big( \sum_{S\subseteq P} \lambda_S 1_S \big)^{\gamma} \domega}\leq \frac{1}{\gamma} \sigma(P).
\end{split}
\end{equation*}

Next, we prove the sufficiency of condition \eqref{eq_wolffpotentialcondition}.  By the characterization via auxiliary coefficients (Theorem \ref{theorem_factorization} combined with Lemma \ref{lemma:subconditions_trivial_factorization}), it suffices to construct families $\{a_Q\}$ and $\{c_Q\}$ that satisfy the conditions:

\begin{subequations}
\begin{align}
\label{condition___ca}& \sup_Q \frac{c_Q^{-1}}{a_Q^{-1}\sigma(Q)} \sum_{R\subseteq Q} \lambda_R a_R^{-1} \omega(R)< \infty 
 \\
\label{condition___cb}&\sum_Q \lambda_Q a_Q^{-p'} c_Q^{p'-1} \omega(Q) < \infty \\
\label{condition___cc}&\int \big( \sup_Q a_Q 1_Q \big)^{\frac{q}{1-q}} \domega < \infty.
\end{align}
\end{subequations}
We choose
$$
a_Q:= \big( \sum_{R\supseteq Q} \lambda_R \Big( \frac{\omega(R)}{\sigma(R)} \Big)^{p'-1} \Lambda_R^{p'-1} \Big)^{\frac{(p-1)(1-q)}{p-q}},
$$
so that condition \eqref{condition___cc} becomes the assumed condition \eqref{eq_wolffpotentialcondition}. Since $a_Q\geq a_R$ whenever $Q\subseteq R$, for condition \eqref{condition___ca} it suffices that
$$
\sum_{R\subseteq Q} \lambda_R \omega(R) \leq c_Q \sigma(Q) \quad\text{for every $Q$},
$$
which is satisfied by choosing $$c_Q:= \frac{1}{\sigma(Q)}\sum_{R\subseteq Q} \lambda_R \omega(R)=:\frac{\omega(Q)}{\sigma(Q)} \Lambda_Q.$$
Under these choices, condition \eqref{condition___cb} is written out as
$$
\sum_Q \lambda_Q \Big(\frac{\omega(Q)}{\sigma(Q)}\Big)^{p'-1} \Lambda_Q^{p'-1} \omega(Q) \big(\sum_{R\supseteq Q} \lambda_R \Big(\frac{\omega(R)}{\sigma(R)}\Big)^{p'-1} \Lambda_R^{p'-1}\big)^{\frac{(p-1)q}{p-q}-1}< \infty,
$$
which, by summation by parts (comparison \eqref{equation:summation_by_parts_inside_integral}), is comparable to the assumed condition \eqref{eq_wolffpotentialcondition}. The proof is complete.

\end{proof}
\subsection{A counterexample to sufficiency for small parameters}\label{section:unsufficiency}

\begin{proposition} Let $0<q<1<p<\infty$. Let $\gamma \in (0,q)$. Then there exist coefficients $\lambda$, and measures $\sigma$ and $\omega$ such that the necessary condition
\begin{equation}\label{eq:condition_necessary}
\sup_Q \frac{1}{\sigma(Q)^{\frac{1}{p}}} \big(\int \big(\sum_{R\subseteq Q} \lambda_R 1_R\big)^q \domega \big)^{\frac{1}{q}}< \infty
\end{equation}
fails, but yet the condition
\begin{equation}\label{eq:condition_unsufficient}
\int \big( \sum_Q \lambda_Q \Big(\frac{\omega(Q)}{\sigma(Q)}\Big)^{p'-1} \Lambda_{\gamma,Q}^{p'-1} 1_Q \big)^{\frac{(p-1)q}{p-q}}\domega<\infty.
\end{equation}
holds (and thereby this condition is not sufficient).
\end{proposition}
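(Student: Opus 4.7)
The plan is to build an explicit example $(\lambda,\sigma,\omega)$ on $\br$ realizing a gap between the necessary testing condition \eqref{eq:condition_necessary} and the generalized Wolff condition \eqref{eq:condition_unsufficient}. The driving idea is that, for $\gamma<q$, the concave map $t\mapsto t^\gamma$ dampens large values of $\rho_Q$ much more aggressively than $t\mapsto t^q$ does. Quantitatively, if $\rho_Q$ takes value $M$ on an $\omega$-set of measure $\delta\eqsim M^{-\gamma}$ and value of order $1$ on a set of $\omega$-measure comparable to $1$, then $\Lambda_{\gamma,Q}$ stays bounded as $M\to\infty$, whereas $\norm{\rho_Q}_{L^q(\omega)}^q\eqsim M^{q-\gamma}\to\infty$.

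To realize this profile dyadically, I would fix a large $N$ and take the chain $Q_k:=[0,2^{-k})$ for $k=0,\dots,N$, setting $\lambda_{Q_k}:=1$ along the chain and $\lambda_Q:=0$ otherwise; then $\rho_{Q_0}$ equals $m+1$ on $Q_m\setminus Q_{m+1}$ and attains its maximum $N+1$ on $Q_N$. I would take $\omega$ as a bulk part on $Q_0\setminus Q_1$ (where $\rho_{Q_0}=1$) together with a point spike of mass $\epsilon\eqsim N^{-\gamma}$ at a point $x_0\in Q_N$ (where $\rho_{Q_0}=N+1$). A direct computation then gives $\Lambda_{\gamma,Q_0}\lesssim 1$ but $\norm{\rho_{Q_0}}_{L^q(\omega)}\eqsim N^{1-\gamma/q}$, so the testing ratio $\sigma(Q_0)^{-1/p}\norm{\rho_{Q_0}}_{L^q(\omega)}$ diverges with $N$ whenever $\sigma(Q_0)$ stays small.

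The delicate part is the construction of $\sigma$. The freedom to place $\sigma$-mass on dyadic cubes that are \emph{siblings} of the chain, and therefore lie outside $\cq$, lets the values $\sigma(Q_k)$ be tuned almost independently of each other subject only to the monotonicity $\sigma(Q_k)\leq\sigma(Q_{k-1})$. I would take $\sigma(Q_k)\eqsim \epsilon(N-k+1)^{\theta}$ for $k\geq 1$, with an exponent $\theta>0$ large enough that the series $\sum_{k\geq 1}(\omega(Q_k)/\sigma(Q_k))^{p'-1}\Lambda_{\gamma,Q_k}^{p'-1}$ evaluated at the spike point converges, while $\sigma(Q_0)$ is chosen so that the testing ratio at $Q_0$ still blows up. The main obstacle is that the spike point $x_0$ belongs to every $Q_k$, and the $\gamma$-averaging in $\Lambda_{\gamma,Q_k}$ provides no help at deep cubes: there $\omega|_{Q_k}$ is effectively the single point mass $\epsilon\delta_{x_0}$ and $\Lambda_{\gamma,Q_k}$ collapses to the pointwise value $\rho_{Q_k}(x_0)=N-k+1$, so the $\omega(Q_k)/\sigma(Q_k)$ ratios must decay fast enough in $N-k+1$ to absorb this. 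Once the parameters are pinned down, verification reduces to two direct computations: splitting the Wolff integral into bulk and spike contributions and applying summation by parts (Lemma~\ref{lemma_summationbyparts}) yields uniform boundedness, whereas the growth rate of $\sigma(Q_0)^{-1/p}\norm{\rho_{Q_0}}_{L^q(\omega)}$ computed above shows that \eqref{eq:condition_necessary} fails.
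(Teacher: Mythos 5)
Your spike mechanism is sound: for $\gamma<q$, placing a point mass of size $\epsilon\eqsim N^{-\gamma}$ at a depth where $\rho_{Q_0}\eqsim N$ indeed keeps $\Lambda_{\gamma,Q_0}\lesssim 1$ while $\int\rho_{Q_0}^q\,\domega\eqsim N^{q-\gamma}\to\infty$, and the observation that $\Lambda_{\gamma,Q_k}$ collapses to the pointwise value $N-k+1$ at deep cubes (forcing $\sigma(Q_k)$ to grow like $(N-k+1)^\theta$ with $\theta>p$) is the right obstruction to track. This is a genuinely different construction from the paper's, which uses a single \emph{infinite} chain $P_0\supseteq P_1\supseteq\cdots$ with polynomially/logarithmically tuned data $\lambda_{P_j}=j^{\alpha-1}\log(j+2)^{-\delta}$, $\omega(E_j)=(j+1)^{-\beta-1}$, $\sigma(P_j)=\log(j+2)^{-\epsilon}$, choosing $\beta=\alpha q$, $q\delta=1$, $\epsilon<1$ so that the Wolff quantity converges by the extra margin $\gamma<q$ in the power of the logarithm while the testing sum at $P_0$ diverges.

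However, there is a real gap. Your construction is a one-parameter \emph{family} of finite examples, indexed by $N$. For any fixed $N$ the supremum in \eqref{eq:condition_necessary} is finite (merely large), so no single member of the family is a counterexample; the proposition requires one triple $(\lambda,\sigma,\omega)$ for which the supremum is actually $+\infty$ while the Wolff integral is finite. Note also that you cannot simply take $N=\infty$: a point mass at $x_0\in\bigcap_k Q_k$ would make $\rho_{Q_0}(x_0)=\infty$ on a set of positive $\omega$-measure, which blows up $\Lambda_{\gamma,Q_0}$ as well and kills \eqref{eq:condition_unsufficient}. What is needed is a gluing step — disjoint copies of the $N$-construction, one per $n$, with $N_n\to\infty$, each rescaled so that the Wolff contributions are summable (say $\eqsim 2^{-n}$) while the testing ratios still diverge. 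This rescaling is delicate because the testing ratio and the Wolff quantity scale with different homogeneities in $\omega$ and $\sigma$ (roughly $t^{1/q}s^{-1/p}$ versus $t^{p/(p-q)}s^{-q/(p-q)}$), so one must choose $N_n$ to grow fast enough to beat the testing-side loss. This step is neither stated nor obviously immediate, and without it the proof does not establish the proposition as stated.
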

\begin{proof}
Let $P_0\supseteq P_1 \supseteq \cdots$ be a decreasing sequence of nested dyadic cubes. Define the dyadic annuli $E_j$ by $E_j:=P_j\setminus P_{j+1}$. We construct the counterexample by choosing the operator's coefficients $\{\lambda_{P_j}\}$, the $\sigma$-measures $\{\sigma(P_j)\}$ of the cubes $P_j$, and the $\omega$-measures $\{\omega(E_j)\}$ of the annuli $E_j$ such that the quantity in condition  \eqref{eq:condition_necessary} is infinite but yet the quantity in condition \eqref{eq:condition_unsufficient} is finite. Note that the only constraints on the choice of these sequences is that they are non-negative and that $\sigma(P_0)\geq  \sigma(P_{1})\geq \cdots$. 

First, we prepare for the computations. We note that, through integration by parts, for all exponents $\alpha$ and $\delta$ with $\delta\neq 0$ and for all integration limits $c$ and $d$, we have 
\begin{equation*}
\begin{split}
&\int_c^d (t+1)^{\alpha-1 } \log (t+1)^\delta \big( \alpha+\frac{\delta}{\log (t+1)} \big) \dt\\
&= (d+1)^{\alpha} \log (d+1)^\delta-(c+1)^{\alpha} \log (c+1)^\delta.
\end{split}
\end{equation*}
Therefore, for all exponents $\alpha, \beta, \delta >0$, we have 
$$
\sum_{j=0}^k j^{\alpha-1} \log (j+2)^{-\delta}\lesssim k^{\alpha} \log (k+2)^{-\delta}
$$
and
$$
\sum_{j=k}^\infty (j+1)^{-\beta-1} \log (j+2)^{-\delta}\lesssim (k+1)^{-\beta} \log (k+2)^{-\delta}
$$
for sufficiently large $k$.

Next, we choose the sequences $\{\lambda_{P_j}\}$, and $\{\omega(E_j)\}$. Let  $\alpha,\beta,\delta>0$ be exponents that we will pick later. We choose
$$
\lambda_{P_j}:= j^{\alpha-1} \log (j+2)^{-\delta} \quad \text{and} \quad \omega(E_j):= (j+1)^{-\beta-1}.
$$

With these choices, we estimate the quantity in  condition \eqref{eq:condition_unsufficient}. Writing out, we have
\begin{equation*}
\begin{split}
&\int \big( \sum_Q \lambda_Q \Big(\frac{\omega(Q)}{\sigma(Q)}\Big)^{p'-1} \Lambda_{\gamma,Q}^{p'-1} 1_Q \big)^{\frac{(p-1)q}{p-q}}\domega\\
&=\sum_{k=0}^\infty \omega(E_k) \Big( \sum_{l=0}^k \lambda_{P_l} \Big( \frac{\omega(P_l)}{\sigma(P_l)}\Big)^{p'-1} \Big(\frac{1}{\omega(P_l)} \sum_{m=l}^\infty \omega(E_m) \big(\sum_{n=l}^m \lambda_{P_n} \big)^\gamma \Big)^{\frac{p'-1}{\gamma}} \Big)^{\frac{(p-1)q}{p-q}}.
\end{split}
\end{equation*}
We start computing the relevant sums appearing in the quantity:
\begin{equation}\label{temp:6}
\begin{split}
\sum_{n=l}^m \lambda_{P_n}&\leq \sum_{n=0}^m \lambda_{P_n} = \sum_{n=0}^m n^{\alpha-1} \log (n+2)^{-\delta} \lesssim m^{\alpha} \log (m+2)^{-\delta}.\\
\sum_{m=l}^\infty \omega(E_m) \big(\sum_{n=l}^m \lambda_{P_n}\big)^\gamma &\leq \sum_{m=l}^\infty m^{-\beta-1 +\alpha\gamma} \log (m+2)^{-\gamma \delta}\\
&\lesssim l^{-\beta+\alpha\gamma} \log (l+2)^{-\gamma \delta}, \text{ assuming $\beta-\alpha\gamma>0$.}\\
\omega(P_l)&=\sum_{j=l}^\infty \omega(E_j)= \sum_{j=l}^\infty (j+1)^{-\beta+1}\eqsim (l+1)^{-\beta}.\\
\Lambda_{\gamma,P_l}&:= \Big(\frac{1}{\omega(P_l)} \sum_{m=l}^\infty \omega(E_m) \big(\sum_{n=l}^m \lambda_{P_n} \big)^\gamma \Big)^{\frac{1}{\gamma}} \lesssim l^{\alpha} \log (l+2)^{-\delta}.\\
\end{split}
\end{equation}
Now, we choose the sequence $\{\sigma(P_j)\}$. Let $\epsilon>0$ be an exponent that we will pick later. We choose 
$$
\sigma(P_j):= \log (j+2)^{-\epsilon}.
$$
With this choice, we continue computing the relevant sums:
\begin{equation}\label{temp:7}
\begin{split}
&\sum_{l=0}^k \lambda_{P_l} \Big( \frac{\omega(P_l)}{\sigma(P_l)}\Big)^{p'-1} \Lambda_{\gamma,P_l}^{p'-1}\\
&\lesssim \sum_{l=0}^k l^{\alpha p' -\beta (p'-1)-1 } \log (l+2)^{- (\delta p'-\epsilon(p'-1))}\\
&\lesssim k^{\alpha p' -\beta (p'-1)} \log(k+2)^{-(\delta p'-\epsilon(p'-1))}, \text{ assuming $\alpha p' -\beta (p'-1)>0$.}\\
&\sum_{k=0}^\infty \omega(E_k) \Big( \sum_{l=0}^k \lambda_{P_l} \Big( \frac{\omega(P_l)}{\sigma(P_l)}\Big)^{p'-1} \Lambda_{\gamma,P_l}^{p'-1}\Big)^{\frac{(p-1)q}{p-q}}\\
&\lesssim \sum_{k=0}^\infty  (k+1)^{-(\beta \frac{p}{p-q}- \alpha \frac{pq}{p-q})-1} \log (k+2)^{-(\delta \frac{pq}{p-q} -\epsilon \frac{q}{p-q})}.
\end{split}
\end{equation}
Thereby, finally, we have obtained the following upper estimate for the quantity in condition \eqref{eq:condition_unsufficient}:
\begin{equation}\label{temp:temp4}
\begin{split}
&\int \big( \sum_Q \lambda_Q \Big(\frac{\omega(Q)}{\sigma(Q)}\Big)^{p'-1} \Lambda_{\gamma,Q}^{p'-1} 1_Q \big)^{\frac{(p-1)q}{p-q}}\domega\\
&\lesssim \sum_{k=0}^\infty  (k+1)^{-(\beta \frac{p}{p-q}- \alpha \frac{pq}{p-q})-1} \log (k+2)^{-(\delta \frac{pq}{p-q} -\epsilon \frac{q}{p-q})}.
\end{split}
\end{equation}

Next, we obtain the following lower estimate for the quantity in  condition \eqref{eq:condition_necessary}:
\begin{equation}\label{temp:temp5}
\begin{split}
&\sup_{Q} \frac{1}{\sigma(Q)^{\frac{q}{p}}} \int \rho_Q^q \domega\geq  \frac{1}{\sigma(P_0)^{q/p}}  \int \rho_{P_0}^q \domega= \frac{1}{\sigma(P_0)^{q/p}}\sum_{k=0}^\infty \omega(E_k) \big(\sum_{l=0}^k \lambda_{P_l}\big)^q\\
&=\frac{1}{\sigma(P_0)^{q/p}}\sum_{k=0}^\infty (k+1)^{-\beta -1} \big( \sum_{l=0}^k l^{\alpha -1 } \log (l+2)^{-\delta }\big)^q\\
&\eqsim \frac{1}{\sigma(P_0)^{q/p}} \sum_{k=0}^\infty (k+1)^{-\beta -1+\alpha q}\log (l+2)^{-q \delta},
\end{split}
\end{equation}

We complete the proof by picking the exponents $\alpha, \beta, \delta,$ and $\epsilon$. First, we pick $\alpha, \beta,$ and $\delta$ such that the necessary condition \eqref{eq:condition_necessary} fails, which is to say that the quantity \eqref{temp:temp5} is infinite. That is obtained by picking 
$$\alpha q=:\beta \quad\text{ and } \quad  q\delta:=1.$$
Next, we choose $\epsilon$ so that condition \eqref{eq:condition_unsufficient} holds, which is to say that the quantity \eqref{temp:temp4} is finite. With the already made choices for $\alpha, \beta, \delta,$ and $\epsilon$, that is obtained by picking any $\epsilon$ such that $\epsilon<1$. Note that these choices for the exponents $\alpha, \beta, \delta,$ and $\epsilon$ satisfy the assumptions appearing in the intermediate computations \eqref{temp:6} and \eqref{temp:7}. The proof is complete.
\end{proof}

\begin{remark} In the endpoint case $p=1$, a similar counterexample yields the following proposition: When the  integrability parameter $\gamma$ is small so that $\gamma \in(0,q)$, the endpoint generalized Wolff potential condition 
$$
\int \big( \sup_Q \Big(\frac{\omega(Q)}{\sigma(Q)}\Big) \Lambda_{\gamma,Q} 1_Q\big)^{\frac{q}{1-q}}\domega<\infty
$$
is in general not sufficient for the endpoint inequality 
$$
\norm{\sum_Q \lambda_Q a_Q 1_Q}_{L^q(\omega)}\lesssim \norm{\sup_Q a_Q 1_Q}_{L^p(\sigma)}\quad \text{for every $\{a_Q\}$}.
$$
\end{remark}

\subsection{Necessity for small parameters }
Fix an integrability parameter $\gamma\in(0,q)$. We recall that $\Lambda_{\gamma,Q}:= \Big(\frac{1}{\omega(Q)}\int \big(\sum_{R\subseteq Q} \lambda_R 1_R \big)^\gamma\domega \Big)^{\frac{1}{\gamma}}$. We prove that the estimate
\begin{equation}\label{eq_necessarywolf}
\Big( \int \Big( \sum_Q \lambda_Q \big( \frac{\omega(Q)}{\sigma(Q)} \big)^{p'-1} \Lambda_{\gamma,Q}^{p'-1} 1_Q \Big)^{\frac{(p-1)q}{p-q}} \domega \Big)^{\frac{p-q}{pq}} \leq C.
\end{equation}
follows from inequality \eqref{eq_norminequality}. 

First, we notice two auxiliary estimates that are necessary for the norm inequality \eqref{eq_norminequality}.  As recorded in Lemma \ref{lemma_immediateconditions}, the following estimates are necessary:
\begin{equation}\label{eq_necessity_equiv}
\norm{\sum_Q \lambda_Q a_Q 1_Q}_{L^q(\omega)} \leq C \norm{\sup_Q a_Q 1_Q}_{L^p(\sigma)},
\end{equation}
and
\begin{equation}\label{eq_necessity_stein}
\norm{\sum_Q \Lambda_{\gamma,Q} a_Q 1_Q}_{L^q(\omega)}\lesssim_{\gamma,q} \norm{\sum_Q a_Q 1_Q}_{L^p(\sigma)}.
\end{equation}

Next, we dualize the claimed estimate \eqref{eq_necessarywolf}, and the auxiliary necessary estimates \eqref{eq_necessity_equiv} and \eqref{eq_necessity_stein}.
By duality in terms of the discrete Littlewood--Paley norms (Proposition \ref{proposition_normduality}),  estimate \eqref{eq_necessarywolf} is equivalent to the estimate
\begin{equation}\label{eq_necessity_conjeture_dual}
\sum_Q \lambda_Q^{\frac{(p-1)q}{p-q}} \Big(\frac{\omega(Q)}{\sigma(Q)}\Big)^{\frac{p}{p-q}} \Lambda_{\gamma,Q}^{\frac{q}{p-q}} b_Q^{\frac{p(1-q)}{p-q}} \sigma(Q)\leq C^{\frac{pq}{p-q}} \norm{b}_{f^{\infty,1}(\omega)}^{\frac{p(1-q)}{p-q}},
\end{equation}
 estimate \eqref{eq_necessity_equiv} to the estimate
\begin{equation}\label{eq_necessity_equiv_dual}
\Big( \int \big( \sum_Q \lambda_Q^q \frac{\omega(Q)}{\sigma(Q)} b_Q^{1-q} 1_Q \big)^{\frac{p}{p-q}} \dsigma \Big)^{\frac{p-q}{p}}\leq C^q \norm{b}_{f^{\infty,1}(\omega)}^{(1-q)},
\end{equation}
and estimate \eqref{eq_necessity_stein} to the estimate
\begin{equation}\label{eq_necessity_stein_dual}
\Big( \int \big( \sum_Q \Lambda_{\gamma,Q}^{\frac{q}{1-q}} \Big(\frac{\omega(Q)}{\sigma(Q)}\Big)^{\frac{1}{1-q}}  b_Q 1_Q \big)^{\frac{p(1-q)}{p-q}} \dsigma \Big)^{\frac{p-q}{p}}\leq C^q \norm{b}_{f^{\infty,1}(\omega)}^{(1-q)}.
\end{equation}

 Finally, we observe that the claimed estimate follows from the auxiliary necessary estimates by H\"older's inequality. 
We define the H\"older exponents $r,s\in(1,\infty)$ by 
\begin{eqnarray*}
  \left\{
  \begin{aligned}
\frac{r}{s}&:=\frac{p}{p-q}\\
\frac{r'}{s'}&:= \frac{p(1-q)}{p-q}
  \end{aligned}
  \right.
\end{eqnarray*}
and the families $\{c_Q\}$ and $\{d_Q\}$ by
\begin{eqnarray*}
  \left\{
  \begin{aligned}
c_Q^s&:=\lambda_Q^q \frac{\omega(Q)}{\sigma(Q)} b_Q^{1-q}  \\
d_Q^{s'}&:= \Lambda_{\gamma,Q}^{\frac{q}{1-q}} \Big(\frac{\omega(Q)}{\sigma(Q)}\Big)^{\frac{1}{1-q}}  b_Q.
  \end{aligned}
  \right.
\end{eqnarray*}
In terms of these, the claimed estimate \eqref{eq_necessity_conjeture_dual} is rewritten as
$$
\norm{c\cdot d}_{f^{1,1}(\sigma)}\leq C^{\frac{pq}{p-q}} \norm{b}_{f^{\infty,1}(\omega)}^{\frac{p(1-q)}{p-q}},,
$$
and the auxiliary necessary estimates \eqref{eq_necessity_equiv} and \eqref{eq_necessity_stein} as
$$
\norm{c}_{f^{r,s}(\sigma)}\leq C^{\frac{(p-1)q}{p-q}}  \norm{b}_{f^{\infty,1}(\omega)}^{(1-q)\frac{p-1}{p-q}} \quad \text{ and } \quad 
\norm{d}_{f^{r',s'}(\sigma)}\leq C^{\frac{q}{p-q}}  \norm{b}_{f^{\infty,1}(\omega)}^{\frac{1-q}{p-q}}.
$$
Using  H\"older's inequality $\norm{c\cdot d}_{f^{1,1}(\sigma)}\leq \norm{c}_{f^{r,s}(\sigma)}\norm{d}_{f^{r',s'}(\sigma)}$  completes the proof.

\subsection{A counterexample to necessity for large parameters}

\begin{proposition}\label{prop_unnecessityforlargeparameters} Let $0<q<1< p<\infty$.  Then there exist measures $\sigma, \omega$ and coefficients $\lambda$ such that that the condition
\begin{equation}\label{eq_ce_sufficientcondition}
\int_Q \big( \sum_Q \lambda_Q^q \frac{\omega(Q)}{\sigma(Q)} 1_Q \big)^{\frac{p}{p-q}} \dsigma < \infty
\end{equation}
holds, but yet the condition
\begin{equation}\label{cf_unnnecessarycondition}
\int \sup_Q \lambda_Q^{\frac{(p-1)q}{p-q}} \Big(\frac{\omega(Q)}{\sigma(Q)}\Big)^{\frac{q}{p-q}} \Lambda_{q,Q}^\frac{q}{p-q} 1_Q \domega < \infty 
\end{equation}
fails.
\end{proposition}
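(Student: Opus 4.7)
My plan is to construct the counterexample by adapting the nested-cube scheme used in Section \ref{section:unsufficiency}. Fix a strictly decreasing chain of dyadic cubes $P_0\supsetneq P_1\supsetneq P_2\supsetneq \cdots$, put $E_j:=P_j\setminus P_{j+1}$, set $\lambda_Q:=0$ for $Q$ not in the chain, and prescribe only the three scalar sequences $\{\lambda_{P_j}\},\{\omega(E_j)\},\{\sigma(P_j)\}$. I take them to be power--logarithmic with four free exponents $\alpha,\beta,\delta,\epsilon>0$:
$$
\lambda_{P_j}:=j^{\alpha-1}\log(j+2)^{-\delta},\quad \omega(E_j):=(j+1)^{-\beta-1},\quad \sigma(P_j):=\log(j+2)^{-\epsilon}.
$$
This data is admissible since $\sigma(P_j)$ is decreasing and all sequences are non-negative.

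The next step is to rewrite both conditions as scalar series. Integrating \eqref{eq_ce_sufficientcondition} on each annulus $E_k$ and using $\sigma(E_k)=\sigma(P_k)-\sigma(P_{k+1})\eqsim (k+2)^{-1}\log(k+2)^{-\epsilon-1}$ (mean value theorem) yields an explicit series $I$ of the form $\sum_k (k+1)^{-c_1}\log(k+2)^{-d_1}$ after invoking the integration-by-parts estimates of Section \ref{section:unsufficiency} to sum the inner $\sum_{l\le k} \lambda_{P_l}^q\,\omega(P_l)/\sigma(P_l)$ and then raise to the power $p/(p-q)$. Similarly, condition \eqref{cf_unnnecessarycondition} integrated on $E_k$ becomes
$$
J:=\sum_{k=0}^\infty \omega(E_k)\, \sup_{0\le l\le k}\lambda_{P_l}^{\frac{(p-1)q}{p-q}}\Bigl(\frac{\omega(P_l)}{\sigma(P_l)}\Bigr)^{\frac{q}{p-q}}\Lambda_{q,P_l}^{\frac{q}{p-q}}.
$$
Using $\omega(P_l)\eqsim (l+1)^{-\beta}$ and (under $\beta>q\alpha$) $\Lambda_{q,P_l}^q\eqsim l^{q\alpha}\log(l+2)^{-q\delta}$, the $l$-th summand inside the supremum becomes a monotone function of $l$, so the sup is comparable to its value at $l=k$, and $J$ reduces to a series of the form $\sum_k (k+1)^{-c_2}\log(k+2)^{-d_2}$ with $(c_2,d_2)\ne(c_1,d_1)$.

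The heart of the argument is then to tune $\alpha,\beta,\delta,\epsilon$ so that $I<\infty$ while $J=\infty$. The asymmetry I exploit is structural: the inner sum in \eqref{eq_ce_sufficientcondition} is raised to the super-unit power $p/(p-q)$, which yields strict improvement over a single largest term, whereas \eqref{cf_unnnecessarycondition} involves only a pointwise supremum and therefore behaves like the ``one-term'' version. Concretely, I would first pin down $\beta$ in terms of $\alpha,p,q$ so that the polynomial decay rates satisfy $c_1\ge 1$ but $c_2\le 1$; then, at the critical power-law boundary, choose $\delta$ so that the logarithmic powers yield $d_1>1$ and $d_2\le 1$, and finally pick $\epsilon<1$ so that the $\sigma$-side introduces no new obstruction. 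This matches the spirit of the exponent choices $\beta=\alpha q,\ q\delta=1,\ \epsilon<1$ made in Section \ref{section:unsufficiency}, only pushed to the opposite sides of the two convergence thresholds.

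The main obstacle is the arithmetic of tracking four exponents through several estimates: the upper bound on $\Lambda_{q,P_l}$, the dominance of the inner sum by its largest term, the asymptotics of $\sigma(P_k)-\sigma(P_{k+1})$, and the identification of the location of the supremum inside $J$. Once these constraints are organized as a small system of strict inequalities together with a critical equality tying $\alpha$ to $\beta$, the existence of a feasible $(\alpha,\beta,\delta,\epsilon)$ is routine, giving simultaneously $I<\infty$ and $J=\infty$ and completing the construction.
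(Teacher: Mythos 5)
Your plan recycles the \emph{inward} nested-chain geometry of Section~\ref{section:unsufficiency} (cubes $P_0\supsetneq P_1\supsetneq\cdots$, $\sigma$ spread along the chain, power--logarithmic data). This cannot work, and not merely because of arithmetic: the constraints you need are structurally incompatible. With $\lambda_{P_j}\eqsim j^{\alpha-1}\log(j+2)^{-\delta}$ and $\omega(E_j)\eqsim(j+1)^{-\beta-1}$, the estimate $\Lambda_{q,P_l}^q\eqsim l^{q\alpha}\log(l+2)^{-q\delta}$ (which you invoke from~\eqref{temp:6}) requires $\beta>q\alpha$; if instead $\beta\le q\alpha$ then $\Lambda_{q,P_l}=\infty$ and one checks that \eqref{eq_ce_sufficientcondition} fails too, so this range is useless. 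On the other hand, with $\beta>q\alpha$ the $l$-th term inside the supremum in $J$ has polynomial exponent $e_1=\frac{q\,[\,p(\alpha-1)+1-\beta\,]}{p-q}$, and divergence of $J=\sum_k\omega(E_k)\sup_{l\le k}(\cdots)$ forces $e_1\ge\beta$, i.e.\ $\beta\le q(\alpha-1)+\frac{q}{p}$. Combining with $\beta>q\alpha$ gives $q\alpha< q(\alpha-1)+\frac{q}{p}$, i.e.\ $\frac{p-1}{p}<0$, impossible for $p>1$. Replacing the logarithmic $\sigma$ by a power $\sigma(P_j)=j^{-\eta}$ shifts $e_1$ by $\frac{q\eta}{p-q}$ but also shifts the $I$-finiteness threshold, and one again ends up needing $p<q$; so no tuning of $(\alpha,\beta,\delta,\epsilon)$ in this geometry can make $I<\infty$ and $J=\infty$ simultaneously. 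In short, your premise---that the super-unit power $\frac{p}{p-q}$ in \eqref{eq_ce_sufficientcondition} versus the ``one-term'' supremum in \eqref{cf_unnnecessarycondition} creates the asymmetry---points in the wrong direction here: that asymmetry favours $I$, and the inward chain is exactly the geometry where it cannot be overcome.

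The paper's actual counterexample uses the opposite geometry: an \emph{increasing} chain $P_0\subseteq P_1\subseteq\cdots$ with $\sigma$ supported and non-vanishing only on the innermost cube $P_0$. Then $\sigma(P_j)=\sigma(P_0)$ for all $j$, the integrand of \eqref{eq_ce_sufficientcondition} is constant on $\supp\sigma$, and $I$ collapses to (a power of) the single scalar series $\sum_j\lambda_{P_j}^q\,\omega(P_j)$. Meanwhile $J$ is bounded below by a constant times $\sum_j\frac{\omega(E_j)}{\omega(P_j)}\,\bigl(\lambda_{P_j}^q\omega(P_j)\bigr)^{\alpha_{p,q}}$ with $\alpha_{p,q}=\frac{p-1}{p-q}\in(0,1)$. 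The decisive asymmetry is the \emph{sub-unit} power $\alpha_{p,q}$ together with the weight $\omega(E_j)/\omega(P_j)$: the elementary estimate~\eqref{equation:lowerestimate} shows that choosing $\omega(E_j)$ growing geometrically (so $\omega(E_j)/\omega(P_j)\gtrsim 1$) and $\lambda_{P_j}^q\omega(P_j)=(j+1)^{-\beta}$ with $1<\beta<1/\alpha_{p,q}$ makes the first series converge while the second diverges. To repair your argument you would need to switch to this outward geometry and identify $\alpha_{p,q}<1$ (not the super-unit $p/(p-q)$) as the exponent to exploit.
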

We note that, since condition \eqref{eq_ce_sufficientcondition} is sufficient for the norm inequality \eqref{eq_norminequality} (by Lemma \ref{lemma_immediateconditions}), the proposition implies that condition \eqref{cf_unnnecessarycondition} and also the stronger condition 
$$
\int \big( \sum_Q \lambda_Q \Big(\frac{\omega(Q)}{\sigma(Q)}\Big)^{p'-1} \Lambda_{q,Q}^{p'-1} 1_Q \big)^{\frac{(p-1)q}{p-q}} \domega < \infty 
$$
are both not necessary for the weighted norm inequality.

\begin{proof}[Proof of Proposition \ref{prop_unnecessityforlargeparameters}]
Let $P_0\subseteq P_1\subseteq \cdots$ be an increasing sequence of cubes. We define the dyadic annuli $E_j$ by  $E_0:=P_0$ and $E_j:=P_j\setminus P_{j-1}$ for $j\geq 1$. Let $\sigma$ be a measure that is supported and non-vanishing on the cube $P_0$. (This is the only requirement for the measure $\sigma$ in this counterexample.) Let $\{\lambda_Q\}$ be coefficients that are non-vanishing only for the cubes $P_j$.

By decomposing the domain of integration into the dyadic annuli, by writing out the nested summation, and by pulling out the constant measure $\sigma(P_j)=\sigma(P_0)$, we write out the left-hand side of condition \eqref{eq_ce_sufficientcondition} as follows:
$$
\text{LHS of \eqref{eq_ce_sufficientcondition}} = \sigma(P_0)^{- \frac{q}{p-q}}\sum_{j=0}^\infty \lambda_{P_j}^q \omega(P_j). 
$$

We notice that the localized sum $\rho_Q:= \sum_{R\subseteq Q} \lambda_R 1_R$  is increasing: $\rho_Q\leq \rho_R$ whenever $Q\leq R$. Thus, $\int \rho_{P_j}^q \domega \geq \int \rho_{P_0}^q \domega=\lambda_{P_0}^q \omega(P_0)$. By using this monotonicity, and by omitting all but one cube in the supremum, we estimate the left-hand side of condition \eqref{cf_unnnecessarycondition} as follows:
\begin{equation*}
\begin{split}
&\text{LHS of \eqref{cf_unnnecessarycondition}}\\
&= \sigma(P_0)^{-\frac{q}{p-q}}\sum_{j=0}^\infty \omega(E_j) \sup_{k\geq j} \lambda_{P_k}^{\frac{(p-1)q}{p-q}} \omega(P_k)^{\frac{q}{p-q}} \Big(\frac{1}{\omega(P_k)}\int \big( \rho_{P_k} \big)^q \domega \big)^{\frac{1}{q}} \Big)^{\frac{q}{p-q}}\\
&\geq (\sigma(P_0)^{-1} \lambda_{P_0} \omega(P_0))^{\frac{q}{p-q}}  \sum_{j=0}^\infty \frac{\omega(E_j)}{\omega(P_j)} \big( \lambda_{P_j}^q \omega(P_j) \big)^{\frac{p-1}{p-q}}.
\end{split}
\end{equation*}
Note that, in our situation, the measure $\omega$ on $\br^d$ is determined by choosing the measures $\omega(E_j)$ of the sets $E_j$. Thus, to prove the proposition, it suffices to find sequences $\{\lambda_{P_J}\}$ and $\{\omega(E_j)\}$ that satisfy the requirements:
\begin{equation}
\label{eq_ce_requirements}
\sum_{j=0}^\infty \lambda_{P_j}^q \omega(P_j) < \infty \quad \text{but} \quad \sum_{j=0}^\infty \frac{\omega(E_j)}{\omega(P_j)} \big( \lambda_{P_j}^q \omega(P_j) \big)^{\alpha}=\infty,
\end{equation}
where the exponent $\alpha$ is defined by $\alpha:=\alpha_{p,q}:=\frac{p-1}{p-q} \in(0,1)$. 

To further simplify the search for such sequences, we observe that for every pair of sequences $\{a_j\}$ and $\{b_j\}$ such that $a_0\geq 1$ and $b_j$ is decreasing, we have
\begin{equation}\label{equation:lowerestimate}
\sum_{j=0}^\infty \frac{a_{j+1}}{\sum_{k=0}^j a_k} b_j^\alpha\geq \lim_{j\to \infty } \big(\log \sum_{k=0}^{j} a_k \big) b_j^\alpha - (\log a_0)b_0,
\end{equation}
which follows from the mean value theorem and a telescoping summation.

We note that $\omega(P_j)=\sum_{k=0}^j \omega(E_j)$. From applying the lower estimate \eqref{equation:lowerestimate} to the right-hand condition of  conditions \eqref{eq_ce_requirements}, it follows that it suffices to find sequences $\{\lambda_{P_J}\}$ and $\{\omega(E_j)\}$ that satisfy the requirements:
$$
\sum_{j=0}^\infty \lambda_{P_j}^q \omega(P_j) < \infty \quad \text{but} \quad \lim_{j\to \infty} \log \omega(P_{j}) \big(\lambda_{P_j}^q \omega(P_j) \big) ^{\alpha}
=\infty,
$$
together with the additional requirements that $\omega(P_0)=:\omega(E_0)\geq 1$, the product sequence $\lambda_{P_j}^q \omega(P_j)$ is decreasing, and $\inf_{j\geq 0} \frac{\omega(E_j)}{\omega(E_{j+1})}>0$. We find such sequences, for example, by picking any $\beta\in(1,\infty)$ such that $\alpha\beta \in (0,1)$, which is possible because $\alpha\in(0,1)$, and setting
$$\omega(E_j):=2^{j+1} \quad \text{and} \quad \lambda_{P_j}^q \omega(P_j):=\frac{1}{(j+1)^\beta}.$$ 
\end{proof}

\appendix
\section{Characterizations by means of auxiliary coefficients}\label{appendix_list_conditions}

In this section we summarize various equivalent conditions in terms of  the families of nonnegative  reals 
$a_Q$, $b_Q$, etc., 
used in the main body of the paper. The appropriate powers of the  quantities appearing in the conditions are comparable, with constants that depend only on the exponents $p$ and $q$. These  powers can be determined by matching the homogeneity (with respect to the scaling of the measures $\sigma$ and $\omega$ and the fixed operator's coefficients $\lambda_Q$) in the corresponding estimates.

The two-weight norm inequality \eqref{eq_norminequality} holds for $0<q<1<p<\infty$ if and only if any one (and hence, by equivalence, all) of the following conditions hold:
\begin{enumerate}[label=(\roman*)]
\item There exists a family $\{a_Q\}$ that satisfies the pair of conditions:
\begin{subequations}
\begin{align*}
&\conditionaa,   \\
&\conditionab .
\end{align*}
\end{subequations}

\item There exist families  $\{a_Q\}$  and $\{b_Q\}$ that satisfy the triple of conditions:

\begin{subequations}
\begin{align*}
&\conditionba,  \\
&\conditionbb,\\
&\conditionbc .
\end{align*}
\end{subequations}

\item There exists a family $\{a_Q\}$  that satisfies the pair of conditions:

\begin{subequations}
\begin{align*}
&\conditionda,\\
&\conditiondb.
\end{align*}
\end{subequations}

\item There exist families  $\{a_Q\}$ and $\{b_Q\}$ that satisfy the triple of conditions:
\begin{subequations}
\begin{align*}
&\conditionea,\\
&\conditioneb,\\
&\conditionec.
\end{align*}
\end{subequations}

\item There exist families  $\{a_Q\}$  and $\{b_Q\}$ that satisfy the triple of conditions:

\begin{subequations}
\begin{align*}
&\conditionca,\\
&\conditioncb\\
&\conditioncc.
\end{align*}
\end{subequations}
\end{enumerate}


\begin{thebibliography}{10}

  \bibitem{adams1996}   
  D.~R. Adams and L.~I. Hedberg.
   \newblock Function spaces and potential theory. 
  \newblock {\em Grundlehren der math. Wissenschaften}, 314. Berlin--Heidelberg--New York, Springer, 1996.
  
  \bibitem{cao2017}
Dat Cao and Igor E. Verbitsky. 
\newblock  Nonlinear elliptic equations and intrinsic potentials of Wolff type. 
\newblock {\em J. Funct. Anal.}, 272(1):112--165, 2017. 

\bibitem{cascante2017}
Carme Cascante and Joaquin~M. Ortega.
\newblock Two-weight norm inequalities for vector-valued operators.
\newblock {\em Mathematika}, 63(1):72--91, 2017.

\bibitem{cascante2004}
Carme Cascante, Joaquin~M. Ortega, and Igor~E. Verbitsky.
\newblock Nonlinear potentials and two weight trace inequalities for general
  dyadic and radial kernels.
\newblock {\em Indiana Univ. Math. J.}, 53(3):845--882, 2004.

\bibitem{cascante2006}
Carme Cascante, Joaquin~M. Ortega, and Igor~E. Verbitsky.
\newblock On {$L^p$}-{$L^q$} trace inequalities.
\newblock {\em J. London Math. Soc. (2)}, 74(2):497--511, 2006.

\bibitem{cohn2000}
W.~S. Cohn and I.~E. Verbitsky.
\newblock Factorization of tent spaces and {H}ankel operators.
\newblock {\em J. Funct. Anal.}, 175(2):308--329, 2000.


\bibitem{frazier1990}
Michael Frazier and Bj\"{o}rn Jawerth.  
\newblock A discrete transform and decompositions of distribution spaces.  
\newblock {\em J. Funct. Anal.},  93(1):34--170, 1990.


\bibitem{hanninen2017}
Timo~S. H\"anninen.
\newblock Two-weight inequality for operator-valued positive dyadic operators
  by parallel stopping cubes.
\newblock {\em Israel J. Math.}, 219(1):71--114, 2017.

\bibitem{hanninen2016}
Timo~S. H\"anninen, Tuomas~P. Hyt\"onen, and Kangwei Li.
\newblock Two-weight {$L^p$}-{$L^q$} bounds for positive dyadic operators:
  unified approach to {$p\leq q$} and {$p>q$}.
\newblock {\em Potential Anal.}, 45(3):579--608, 2016.

\bibitem{hytonen2014}
Tuomas~P. Hyt\"onen.
\newblock The {$A_2$} theorem: remarks and complements.
\newblock In {\em Harmonic analysis and partial differential equations}, volume
  612 of {\em Contemp. Math.}, pages 91--106. Amer. Math. Soc., Providence, RI,
  2014.
\newblock arXiv:1212.3840 [math.CA].

\bibitem{kuusi2014} T.~Kuusi and G. Mingione.
\newblock 
\newblock Guide to nonlinear potential estimates.  
\newblock {\em Bull. Math. Sci.}, 4, 1--82, 2014. 

\bibitem{lacey2009}
Michael~T. Lacey, Eric~T. Sawyer, and Ignacio Uriarte-Tuero.
\newblock Two weight inequalities for discrete positive operators.
\newblock 2009.
\newblock arXiv:0911.3437 [math.CA].

\bibitem{lai2015}
Jingguo Lai.
\newblock A new two weight estimates for a vector-valued positive operator.
\newblock 2015.
\newblock Preprint. arXiv:1503.06778 [math.CA].

\bibitem{maurey1974}
Bernard Maurey.
\newblock Th\'eor\`emes de factorisation pour les op\'erateurs lin\'eaires
  \`a valeurs dans les espaces {$L^{p}$}.
\newblock Soci\'et\'e Math\'ematique de France, Paris. 
\newblock {\em Ast\'erisque}, 11, 1974.

\bibitem{mazya2011}
Vladimir G. Maz'ya.
 \newblock Sobolev spaces, with applications to elliptic partial differential equations, 2nd augmented ed.
  \newblock   {\em Grundlehren der math. Wissenschaften}, 342. Springer, Berlin, 2011.

\bibitem{nazarov1999}
F.~Nazarov, S.~Treil, and A.~Volberg.
\newblock The {B}ellman functions and two-weight inequalities for {H}aar
  multipliers.
\newblock {\em J. Amer. Math. Soc.}, 12(4):909--928, 1999.

\bibitem{pisier1986}
Gilles Pisier.
\newblock Factorization of operators through {$L_{p\infty}$} or {$L_{p1}$} and
  noncommutative generalizations.
\newblock {\em Math. Ann.}, 276(1):105--136, 1986.

\bibitem{quinn2017}
Stephen Quinn and Igor E. Verbitsky. 
\newblock A sublinear version of Schur's lemma and elliptic PDE. 
\newblock Preprint. arXiv: 1702.02682 [math.AP]. 

\bibitem{sawyer1992}
E.~Sawyer and R.~L. Wheeden.
\newblock Weighted inequalities for fractional integrals on {E}uclidean and
  homogeneous spaces.
\newblock {\em Amer. J. Math.}, 114(4):813--874, 1992.

\bibitem{scurry2013}
James Scurry.
\newblock A characterization of two-weight inequalities for a vector-valued
  operator.
\newblock 2013.
\newblock Preprint. arXiv:1007.3089 [math.CA].

\bibitem{tanaka2014}
Hitoshi Tanaka.
\newblock A characterization of two-weight trace inequalities for positive
  dyadic operators in the upper triangle case.
\newblock {\em Potential Anal.}, 41(2):487--499, 2014.

\bibitem{tanaka2015}
Hitoshi Tanaka.
\newblock The trilinear embedding theorem.
\newblock {\em Studia Math.}, 227(3):239--248, 2015.

\bibitem{tanaka2016}
Hitoshi Tanaka.
\newblock The {$n$} linear embedding theorem.
\newblock {\em Potential Anal.}, 44(4):793--809, 2016.

\bibitem{treil2015}
Sergei Treil.
\newblock A remark on two weight estimates for positive dyadic operators.
\newblock In {\em Operator-related function theory and time-frequency
  analysis}, volume~9 of {\em Abel Symp.}, pages 185--195. Springer, Cham,
  2015.
\newblock arXiv:1201.1455 [math.CA].

\bibitem{verbitsky1996}
Igor E. Verbitsky.
\newblock Imbedding and multiplier theorems for discrete Littlewood-Paley
spaces.
\newblock {\em Pacific J. Math.}, 176(2):529--556, 1996.

\bibitem{verbitsky2017}
Igor E. Verbitsky. 
\newblock Sublinear equations and Schur's test for integral operators. 
\newblock In {\em 50 Years with Hardy Spaces, a Tribute to Victor Havin}, 
Eds. A. Baranov et al, Birkh\"{a}user, Ser. Operator Theory: Adv. Appl., volume~261, 2017.  


\end{thebibliography}

\end{document}